\newlength\matfield
\newlength\tmplength
\def\matscale{1.}
\newcommand\dimbox[3]{%
  \setlength\matfield{\matscale\baselineskip}%
  \setbox0=\hbox{\vphantom{X}\smash{#3}}%
  \setlength{\tmplength}{#1\matfield-\ht0-\dp0}%
  \fboxrule=1pt\fboxsep=-\fboxrule\relax%
  \fbox{\makebox[#2\matfield]{\addstackgap[.5\tmplength]{\box0}}}%
}
\newcommand\raiserows[2]{%
   \setlength\matfield{\matscale\baselineskip}%
   \raisebox{#1\matfield}{#2}%
}
\newcommand\matbox[5]{
  \stackunder{\dimbox{#1}{#2}{$#5$}}{\scriptstyle(#3\times #4)}%
}
\definecolor{markercolor}{RGB}{124.9, 255, 160.65}
\newtheorem{theorem}{Theorem}[section]
\newtheorem{remark}{Remark}
\renewcommand{\hat}{\widehat}
\renewcommand{\tilde}{\widetilde}
\newcommand*\diff[1]{\mathop{}\!{\mathrm{d}#1}}
\newcommand{\diag}[1]{{\rm diag}\LRp{#1}}
\newcommand{\td}[2]{\frac{{\rm d}#1}{{\rm d}{ {#2}}}}
\newcommand{\pd}[2]{\frac{\partial#1}{\partial#2}}
\newcommand{\nor}[1]{\left\| #1 \right\|}
\newcommand{\LRp}[1]{\left( #1 \right)}
\newcommand{\LRs}[1]{\left[ #1 \right]}
\newcommand{\LRb}[1]{\left| #1 \right|}
\newcommand{\LRc}[1]{\left\{ #1 \right\}}
\newcommand{\LRl}[1]{\left. #1 \right|}
\newcommand{\jump}[1] {\ensuremath{\llbracket#1\rrbracket}}
\newcommand{\avg}[1] {\ensuremath{\LRc{\!\LRc{#1}\!}}}
\newcommand{\note}[1]{{\color{blue}{#1}}}
\newcommand{\bnote}[1]{#1}
\newcommand{\rnote}[1]{#1}
\newcommand{\fnt}[1]{\bm{\mathsf{ #1}}}
\newcommand{\eq}[1]{\begin{align*}#1\end{align*}}
\newcommand{\eqlab}[1]{\begin{align}#1\end{align}}
\newcommand{\bmat}[1]{\begin{bmatrix}#1\end{bmatrix}}
\date{}
\begin{document}

%\maketitle

\begin{frontmatter}
\title{Efficient computation of Jacobian matrices for entropy stable summation-by-parts schemes}
\author{Jesse Chan, Christina G.\ Taylor}

%\author[rice]{Jesse Chan\corref{cor1}, Christina Taylor}
\ead{jesse.chan@rice.edu,cgt@rice.edu}
\address[rice]{Department of Computational and Applied Mathematics, Rice University, 6100 Main St, Houston, TX, 77005}

\begin{abstract}
Entropy stable schemes replicate an entropy inequality at the semi-discrete level.  These schemes rely on an algebraic summation-by-parts (SBP) structure and a technique referred to as flux differencing.  We provide simple and efficient formulas for Jacobian matrices for the semi-discrete systems of ODEs produced by entropy stable discretizations.  These formulas are derived based on the structure of flux differencing and derivatives of flux functions, which can be computed using automatic differentiation (AD).  Numerical results demonstrate the efficiency and utility of these Jacobian formulas, which are then used in the context of two-derivative explicit time-stepping schemes and implicit time-stepping.   
\end{abstract}

\end{frontmatter}

\section{Introduction}

This paper is concerned with the numerical discretization of systems of nonlinear conservation laws. In particular, we focus on the computation of Jacobian matrices for nonlinear residuals associated with entropy conservative and entropy stable semi-discretizations. Such matrices are useful in the context of implicit time-stepping schemes \cite{persson2008newton}, as well as adjoint-based sensitivity computations and optimization \cite{ulbrich2002sensitivity, gunzburger2003perspectives}.

Entropy stable discretizations mimic a continuous dissipation of entropy for nonlinear conservation laws. Let $\Omega$ denote some domain with boundary $\partial \Omega$.  Nonlinear conservation laws are expressed as a system of nonlinear partial differential equations (PDEs) 
\begin{equation}
\pd{\bm{u}}{t}  + \sum_{i=1}^d\pd{\bm{f}_i(\bm{u})}{x_i} = 0, \qquad 
S(\bm{u}) \text{ convex}, \qquad
\bm{v}(\bm{u}) = \pd{S}{\bm{u}},
\label{eq:nonlineqs}
\end{equation}
where $\bm{u}\in \mathbb{R}^n$ are the conservative variables, $\bm{f}_i$ are nonlinear fluxes, and $\bm{v}(\bm{u})$ are the \textit{entropy variables} with respect to the entropy $S(\bm{u})$.  By multiplying (\ref{eq:nonlineqs}) by the entropy variables, vanishing viscosity solutions \cite{kruvzkov1970first} of many fluid systems \cite{hughes1986new, chen2017entropy} can be shown to satisfy the following entropy inequality
\begin{equation}
\int_{\Omega}\pd{S(\bm{u})}{t} + \sum_{i=1}^d \int_{\partial \Omega} \LRp{\bm{v}^T\bm{f}_i(\bm{u}) - \psi_i(\bm{u})}n_i \leq 0\label{eq:entropyineq},
\end{equation}
where $n_i$ denotes the $i$th component of the outward normal vector \rnote{and $\psi_i(\bm{u})$ denotes the entropy potential in the $i$th coordinate}.  The entropy inequality (\ref{eq:entropyineq}) is a statement of stability for nonlinear conservation laws \cite{mock1980systems, harten1983symmetric}.  

High order entropy stable schemes (see for example \cite{carpenter2014entropy, gassner2016split, chen2017entropy, crean2018entropy, chan2017discretely, parsani2016entropy, fernandez2019staggered}) reproduce this entropy inequality at the semi-discrete level.  The resulting methods display significantly improved robustness while retaining high order accuracy \cite{winters2018comparative, rojas2019robustness}.  These schemes are based on entropy conservative finite volume fluxes \cite{tadmor1987numerical}, which are extended to high order discretizations through a procedure referred to as flux differencing.  These methods have mainly been tested in the context of explicit time-stepping.  However, recent works have applied entropy stable methods to both the space-time and implicit settings \cite{friedrich2018entropy, hicken2020entropy}.  

Both space-time and implicit time discretizations require the solution of a system of nonlinear equations.  This can be done using Newton's method, which involves the Jacobian matrix of the nonlinear equations.  While it is possible to compute the solution to the nonlinear system without explicitly computing the Jacobian matrix using Jacobian-free Newton-Krylov methods \cite{knoll2004jacobian, birken2019subcell}, the Jacobian matrix is commonly used to construct preconditioners \cite{persson2008newton}.  

In this work, we present efficient formulas for Jacobian matrices of systems resulting from entropy stable formulations.  We also show that computing the Jacobian matrix is not significantly more expensive than evaluating the residual of the nonlinear system.  Finally, we apply the new Jacobian formulas to both explicit two-derivative and implicit time-stepping schemes.

%, which are defined as follows.  Let $\bm{u}_L,\bm{u}_R$ denote left and right solution states.  Entropy conservative fluxes in the sense of Tadmor \cite{tadmor1987numerical} are defined as a set of bivariate symmetric and consistent functions $\bm{f}_{i,S}(\bm{u}_L,\bm{u}_R)$ such that 
%\[
%\LRp{\bm{v}_L -\bm{v}_R}^T\bm{f}_{i,S}(\bm{u}_L,\bm{u}_R) = \psi_i(\bm{u}_L) - \psi_i(\bm{u}_R), \qquad i = 1,\ldots,d.
%\]
%\note{Todo: finish}

\subsection{On notation}

The notation in this paper is motivated by notation in \cite{crean2018entropy, fernandez2019entropy}. Unless otherwise specified, vector and matrices are denoted using lower and upper case bold font, respectively.  We denote spatially quantities \bnote{related to the spatial discretization (e.g., operators for differentiation, interpolation, or quadrature)} using a bold sans serif font. Finally, continuous functions \bnote{with vector arguments are interpreted as applying the continuous function to each entry of the vector.}

For example, if $\fnt{x}$ denotes a vector of point locations, i.e., $(\fnt{x})_i = \bm{x}_i$, then $u(\fnt{x})$ is interpreted as the vector 
\[	({u}(\fnt{x}))_i = {u}(\bm{x}_i).
\]
Similarly, if $\fnt{u} = {u}(\fnt{x})$, then ${f}(\fnt{u})$ corresponds to the vector
\[
	({f}(\fnt{u}))_i = {f}(u(\bm{x}_i)).
\]
Vector-valued functions are treated similarly. For example, given a vector-valued function $\bm{f}:\mathbb{R}^n\rightarrow \mathbb{R}^n$ and a vector \rnote{$\fnt{u}$ with vector-valued entries $\fnt{u}_i = \bm{u}_i \in \mathbb{R}^n$}, $\LRp{\bm{f}(\fnt{u})}_i = \bm{f}(\bm{u}_i)$.

\section{Jacobian matrix formulas for entropy conservative schemes}

For clarity of presentation, we consider first a scalar nonlinear conservation law in one spatial dimension
\eqlab{
\pd{u}{t} + \pd{f(u)}{x} = 0. \label{eq:ncl}
}
We assume periodic boundary conditions, which will simplify the presentation of the main results.  
\rnote{Non-periodic boundaries are treated in Section~\ref{sec:nonperiodic}}.  
The generalization to systems of nonlinear conservation laws is postponed until Section~\ref{sec:systems}.  

Let $f_S(x,y)$ denote a bivariate scalar flux function which is symmetric and consistent.  Suppose $\fnt{u}$ is a vector of nodal values of the solution.  Define the vector $\fnt{r} = \fnt{r}(\fnt{u})$ approximating the flux derivative $\pd{f(u)}{x}$ as 
\eqlab{
\fnt{r}(\fnt{u}) = 2\LRp{\fnt{Q}\circ \fnt{F}}\bm{1}, \qquad \fnt{F}_{ij} = f_S(\fnt{u}_i,\fnt{u}_j),
\label{eq:fu}
}
where $\fnt{Q}$ is a discretization matrix to be specified later \rnote{and $\circ$ denotes the matrix Hadamard product}.  The simplest entropy stable numerical schemes based on flux differencing discretize (\ref{eq:ncl}) via the system of ODEs
\[
\fnt{M}\td{\fnt{u}}{t} + \fnt{r}(\fnt{u}) = \fnt{0}.
\]
where $\fnt{M}$ is a diagonal mass (norm) matrix with positive entries.  If $f_S(x,y)$ is entropy conservative (in the sense of \cite{tadmor1987numerical}) and $\fnt{Q}$ is skew-symmetric, then the resulting scheme is also discretely entropy conservative.  An entropy stable scheme can be constructed from an entropy conservative scheme by adding appropriate terms which dissipate entropy \cite{chen2017entropy, upperman2019entropy, hicken2020entropy}.

We are interested in computing the Jacobian matrix $\pd{\fnt{r}}{\fnt{u}}$. Let $\diag{\fnt{x}}$ denote the diagonal matrix with the vector $\fnt{x}$ on the diagonal and let $\diag{\fnt{A}}$ denote the vector diagonal of $\fnt{A}$.  We then have the following theorem:
\begin{theorem}
\label{thm:explicitJ}
Suppose that $\fnt{Q} = \pm\fnt{Q}^T$.  Then, the Jacobian matrix of the entropy conservative scheme (\ref{eq:fu}) can be expressed as either
\eq{
\pd{\fnt{r}}{\fnt{u}} &= 2\LRp{\fnt{Q}\circ \fnt{F}_y} \pm \diag{\fnt{1}^T\LRp{2\fnt{Q}\circ \fnt{F}_y}}\\
\pd{\fnt{r}}{\fnt{u}} &= 2\LRp{\fnt{Q}\circ \fnt{F}_x^T} \pm \diag{\LRp{2\fnt{Q} \circ \fnt{F}_x}\fnt{1}}
}
where the matrices $\fnt{F}_x,\fnt{F}_y$ are
\[
\LRp{\fnt{F}_x}_{ij} = \LRl{\pd{f_S}{x}}_{\fnt{u}_i,\fnt{u}_j}, \qquad \LRp{\fnt{F}_y}_{ij} = \LRl{\pd{f_S}{y}}_{\fnt{u}_i,\fnt{u}_j}.
\]
\end{theorem}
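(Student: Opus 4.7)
The plan is a direct chain-rule computation at the entry level. Writing $\fnt{r}_i = 2\sum_j \fnt{Q}_{ij}\, f_S(\fnt{u}_i, \fnt{u}_j)$ and differentiating with respect to $\fnt{u}_k$, the chain rule produces two contributions (one from each argument of $f_S$) tagged by Kronecker deltas $\delta_{ik}$ and $\delta_{jk}$. Collecting them gives the raw expression
\eq{
\pd{\fnt{r}_i}{\fnt{u}_k} = 2\fnt{Q}_{ik}(\fnt{F}_y)_{ik} + 2\delta_{ik}\sum_j \fnt{Q}_{ij}(\fnt{F}_x)_{ij},
}
which in matrix form reads $\pd{\fnt{r}}{\fnt{u}} = 2(\fnt{Q}\circ\fnt{F}_y) + \diag\LRp{(2\fnt{Q}\circ\fnt{F}_x)\fnt{1}}$. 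This step uses only the chain rule and the definition of $\fnt{r}$; neither symmetry of $f_S$ nor the (skew-)symmetry of $\fnt{Q}$ has been invoked yet.

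The two stated forms then follow by rewriting this raw expression using the two pieces of structure available. Differentiating the consistency/symmetry $f_S(x,y)=f_S(y,x)$ yields $\fnt{F}_y = \fnt{F}_x^T$, and the hypothesis $\fnt{Q} = \pm\fnt{Q}^T$ allows a swap of row and column indices on $\fnt{Q}$ at the cost of a sign. For the first stated form I would leave the Hadamard piece alone and recast the diagonal correction: chaining $(\fnt{F}_x)_{ij}=(\fnt{F}_y)_{ji}$ with $\fnt{Q}_{ij}=\pm\fnt{Q}_{ji}$ turns $\sum_j \fnt{Q}_{ij}(\fnt{F}_x)_{ij}$ into $\pm\sum_j \fnt{Q}_{ji}(\fnt{F}_y)_{ji}$, which is $\pm$ the $i$th entry of the column-sum vector $\fnt{1}^T(\fnt{Q}\circ\fnt{F}_y)$. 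For the second stated form I would instead recast the Hadamard piece: using $(\fnt{F}_y)_{ik}=(\fnt{F}_x)_{ki}$ rewrites $2(\fnt{Q}\circ\fnt{F}_y)$ as $2(\fnt{Q}\circ\fnt{F}_x^T)$, while the diagonal correction is kept in $\fnt{F}_x$-form; the $\pm$ in that display is then inherited from the identity $\fnt{Q}\circ\fnt{F}_x^T = \pm(\fnt{Q}\circ\fnt{F}_x)^T$, which again rests on $\fnt{Q}=\pm\fnt{Q}^T$ and makes the two displays manifestly the same object.

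The proof is entirely mechanical and I do not expect any real obstacle beyond careful Kronecker-delta bookkeeping. The one subtle point is to apply $\fnt{Q}=\pm\fnt{Q}^T$ exactly once per conversion and keep it cleanly separated from the $f_S$ symmetry, so that no sign is introduced or dropped by accident. Both forms are worth stating because automatic differentiation of $f_S$ naturally produces either $\fnt{F}_x$ or $\fnt{F}_y$ depending on how the bivariate flux is coded.
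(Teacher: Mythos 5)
Your derivation takes exactly the paper's route: an entrywise chain-rule computation, a split into off-diagonal and diagonal contributions, and a conversion of the diagonal correction using the symmetry of $f_S$ together with $\fnt{Q}=\pm\fnt{Q}^T$. Your ``raw'' expression $\pd{\fnt{r}}{\fnt{u}} = 2\LRp{\fnt{Q}\circ\fnt{F}_y} + \diag{\LRp{2\fnt{Q}\circ\fnt{F}_x}\fnt{1}}$ is precisely what the paper's case analysis produces, and your passage from it to the first stated form --- rewriting $\sum_j \fnt{Q}_{ij}\LRp{\fnt{F}_x}_{ij}$ as $\pm$ the $i$th column sum of $2\fnt{Q}\circ\fnt{F}_y$ via $\LRp{\fnt{F}_x}_{ij}=\LRp{\fnt{F}_y}_{ji}$ and $\fnt{Q}_{ij}=\pm\fnt{Q}_{ji}$ --- is the paper's argument verbatim. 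That part is correct.

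The second form, however, does not come out of your manipulation as written. Your raw expression already pins the diagonal correction as $+\diag{\LRp{2\fnt{Q}\circ\fnt{F}_x}\fnt{1}}$ with an unconditional plus sign, and replacing the Hadamard factor $\fnt{F}_y$ by $\fnt{F}_x^T$ uses only the symmetry of $f_S$, no property of $\fnt{Q}$; so what you actually obtain is $2\LRp{\fnt{Q}\circ\fnt{F}_x^T}+\diag{\LRp{2\fnt{Q}\circ\fnt{F}_x}\fnt{1}}$, not an expression carrying a $\pm$. The identity $\fnt{Q}\circ\fnt{F}_x^T=\pm\LRp{\fnt{Q}\circ\fnt{F}_x}^T$ from which you propose to ``inherit'' the $\pm$ is a statement about the off-diagonal Hadamard block and cannot transfer a sign onto the separate diagonal term. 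Indeed no argument can: taking $f_S(u_L,u_R)=\frac16\LRp{u_L^2+u_Lu_Ru+u_R^2}$ with two nodes and $\fnt{Q}=\bigl(\begin{smallmatrix}0&1\\-1&0\end{smallmatrix}\bigr)$, the minus-sign branch of the second display fails unless $\LRp{\fnt{Q}\circ\fnt{F}_x}\fnt{1}=\fnt{0}$. This is really a sign slip in the printed theorem rather than in your reasoning (the paper proves only the first formula and asserts the second ``follows via symmetry and similar steps''); the clean resolution is to state and prove the second form with a plus sign in front of the diagonal term, which your raw expression delivers immediately, rather than to manufacture a $\pm$ that the computation does not support.
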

\begin{proof}
We will prove the first formula involving $\fnt{F}_y$.  The second formula follows via symmetry and similar steps.  By the chain rule,
\eq{
\LRp{\pd{\fnt{r}}{\fnt{u}}}_{ij} &= \pd{\fnt{r}_i}{\fnt{u}_j} =  \sum_{k} 2\fnt{Q}_{ik} \pd{}{\fnt{u}_j}f_S\LRp{\fnt{u}_i,\fnt{u}_k}= \sum_{k} 2\fnt{Q}_{ik} \LRp{\LRl{ \pd{f_S}{x}}_{\fnt{u}_i,\fnt{u}_k}\pd{\fnt{u}_i}{\fnt{u}_j} + \LRl{\pd{f_S}{y}}_{\fnt{u}_i,\fnt{u}_k} \pd{\fnt{u}_k}{\fnt{u}_j}}
}
If $i\neq j$, then $\pd{\fnt{u}_i}{\fnt{u}_j} = \delta_{ij} = 0$. \rnote{Moreover}, most terms in the sum over $k$ vanish except for $k=j$. Since $\pd{\fnt{u}_k}{\fnt{u}_j} = 1$ \bnote{for $k=j$}, the formula reduces to 
\eq{
\pd{\fnt{r}_i}{\fnt{u}_j} &= 2\fnt{Q}_{ij} \LRl{\pd{f_S}{y}}_{\fnt{u}_i,\fnt{u}_j}.
}
When $i=j$, $\pd{\fnt{u}_i}{\fnt{u}_j} = \pd{\fnt{u}_i}{\fnt{u}_i} = 1$, and 
\eq{
\pd{\fnt{r}_i}{\fnt{u}_i} &= 
\LRp{\sum_{k} 2 \fnt{Q}_{ik} \LRl{\pd{f_S}{x}}_{\fnt{u}_i,\fnt{u}_k}} + 2\fnt{Q}_{ii}\LRl{\pd{f_S}{y}}_{\fnt{u}_i,\fnt{u}_i}.
}
The term $2\fnt{Q}_{ii}\LRl{\pd{f_S}{y}}_{\fnt{u}_i,\fnt{u}_i}$ is the diagonal of the matrix $2\LRp{\fnt{Q}\circ \fnt{F}_y}$, and we can simplify the first summation term.  By the symmetry of $f_S(x,y)$, we have that
\[
\LRl{\pd{f_S}{y}}_{x,y} = \LRl{\pd{f_S}{x}}_{y,x}
\]
Thus, by $\fnt{Q} = \pm\fnt{Q}^T$, 
\eq{
\sum_k 2\fnt{Q}_{ik} \LRl{\pd{f_S}{x}}_{\fnt{u}_i,\fnt{u}_k} &= \sum_k 2\fnt{Q}_{ik} \LRl{\pd{f_S}{y}}_{\fnt{u}_k,\fnt{u}_i}= \LRp{\LRp{2\fnt{Q}\circ \fnt{F}_y^T} \fnt{1}}_i = \LRp{\pm\fnt{1}^T\LRp{2\fnt{Q}\circ \fnt{F}_y}}_i.
}
\end{proof}
%Rearranging the steps of this proof implies that the Jacobian can also be expressed as 
While we consider only symmetric and skew-symmetric matrices $\fnt{Q}$ in this work, one can use this theorem to compute the Jacobian $\pd{\fnt{r}}{\fnt{u}}$ for arbitrary matrices $\fnt{Q}$ since any real matrix can be decomposed into symmetric and skew parts
\[
\fnt{Q} = \frac{1}{2} \LRp{\fnt{Q}+\fnt{Q}^T} +  \frac{1}{2} \LRp{\fnt{Q}-\fnt{Q}^T}.
\]
Two applications of Theorem~\ref{thm:explicitJ} then provide a formula for the Jacobian of (\ref{eq:fu}).  

\subsection{Computing derivatives of bivariate flux functions}

The aforementioned proofs require partial derivatives of flux functions $f_S(u_L,u_R)$ with respect to at least one argument.  This can be done by hand for simple fluxes. For example, for the Burgers' equation, the flux and its derivative are
\[
f_S(u_L,u_R) = \frac{1}{6}\LRp{u_L^2 + u_Lu_R + u_R^2}, \qquad \pd{f_S}{u_R} = \frac{1}{6}\LRp{u_L + 2u_R}.
\]
However, this procedure can become cumbersome for complex or piecewise-defined flux functions such as the logarithmic mean \cite{ismail2009affordable, winters2019entropy}.  This can be avoided by using Automatic Differentiation (AD) \cite{griewank2008evaluating}.  AD is distinct from both symbolic differentiation and finite difference approximations in that it does not return an explicit expression, but constructs a separate function which evaluates the derivative accurately up to machine precision.

In this work, we utilize the Julia implementation of forward-mode automatic differentiation provided by \verb+ForwardDiff.jl+ \cite{RevelsLubinPapamarkou2016}.  The procedure is remarkably simple: given some flux function \verb+f(x,y)+, \verb+ForwardDiff.jl+ returns the derivative with respect to either $x$ or $y$ as another function.  For example, defining the function $\LRl{\pd{f}{y}}_{x,y}$ is a one-line operation:
\begin{verbbox}
dfdy(x,y) = ForwardDiff.derivative(y->f(x,y),y)
\end{verbbox}
\capstartfalse
\begin{figure}[!h]
\centering
\theverbbox
\end{figure}
\capstarttrue
\\
\verb+ForwardDiff.jacobian+ is the analogous routine for computing Jacobians of vector-valued flux functions. This simple API utilizes the flexible Julia type system \cite{bezanson2017julia}.\footnote{In practice, derivative and Jacobian functions are initialized with information about the size and data type of the input to ensure type stability in Julia.}

Automatic differentiation can be directly applied to $\fnt{r}(\fnt{u})$ to compute the Jacobian matrix.  However, because AD scales with the number of inputs and outputs, the cost of applying AD directly to $\fnt{r}(\fnt{u})$ increases as the discretization resolution increases.  In contrast, using the approach in this paper, AD is applied only to the flux function, which has a small fixed number of inputs and outputs which are independent of the discretization resolution.  As a result, the cost of evaluating derivatives of the flux function is roughly the same as the cost of evaluating the flux function itself and entries of the Jacobian matrix can be computed for roughly the same cost as a single evaluation of the nonlinear term $\fnt{r}(\fnt{u})$.  Moreover, when computing the Jacobian matrix, the formula in Theorem~\ref{thm:explicitJ} makes it simpler to directly take advantage of sparsity in $\fnt{Q}$ without having to perform graph coloring \cite{coleman1998efficient}. 

\section{Examples of discretization matrices which appear in entropy conservative numerical schemes}

In this section, we give some examples of matrices $\fnt{Q}$ which appear in entropy stable numerical discretizations.  We assume periodicity, which corresponds to a skew-symmetric structure for $\fnt{Q}$. Non-periodic domains are treated later. 

\subsection{Finite volume methods}

The spatial discretization for \bnote{most second order} finite volume schemes can be reformulated in terms of (\ref{eq:fu}) \cite{chan2019entropy}.  Suppose that the 1D interval $[-1,1]$ is decomposed into $K$ non-overlapping elements of size $h$.  An entropy conservative finite volume scheme is given as
\eq{
\td{\fnt{u}_1}{t} &+ \frac{f_S(\fnt{u}_{2},\fnt{u}_1) - f_S(\fnt{u}_1,\fnt{u}_{K})}{h} = 0\\
\td{\fnt{u}_i}{t} &+ \frac{f_S(\fnt{u}_{i+1},\fnt{u}_i) - f_S(\fnt{u}_i,\fnt{u}_{i-1})}{h} = 0, \qquad i = 2,\ldots, K-1,\\
\td{\fnt{u}_K}{t} &+ \frac{f_S(\fnt{u}_{1},\fnt{u}_K) - f_S(\fnt{u}_K,\fnt{u}_{K-1})}{h} = 0,
}
where $\fnt{u}_i$ denotes the average value of the solution on each element and $f_S$ is an entropy conservative flux.  Let $\fnt{M} = h\fnt{I}$ and let $\fnt{Q}$ be the periodic second-order central difference matrix
\[
\fnt{Q} = \frac{1}{2}\begin{bmatrix}
0 & 1 & &\ldots & -1\\
-1 & 0 & 1 &&  \\
& -1 & 0 & 1 &  \\
 & & & \ddots &  \\
1 & &\ldots  & -1 & 0
\end{bmatrix}.
\]
An entropy conservative finite volume scheme is then equivalent to 
\[
\td{\fnt{u}}{t} + 2\LRp{\fnt{Q}\circ\fnt{F}}\fnt{1} = \fnt{0}, \qquad \fnt{F}_{ij} = f_S(u_i,u_j)
\]
where $\fnt{u} = \LRs{\fnt{u}_1,\ldots, \fnt{u}_K}^T$ is the vector of solution values.  

\subsection{Multi-block summation-by-parts finite differences and discontinuous Galerkin spectral element methods}

We consider next a multi-element summation-by-parts (SBP) finite element discretization \cite{kreiss1974finite, carpenter1999stable}.  Suppose again that a one-dimensional domain $\Omega$ is decomposed into $K$ non-overlapping elements $D^k$ of size $h$.  Let ${\fnt{M}}$ and ${\fnt{Q}} \in \mathbb{R}^{N_p\times N_p}$ denote diagonal mass (norm) and nodal differentiation matrices such that ${\fnt{M}}^{-1}{\fnt{Q}}$ approximates the first derivative on a reference interval and is exact for polynomials up to degree $N$.  The operators ${\fnt{M}}, {\fnt{Q}}$ satisfy an SBP property if
\eqlab{
{\fnt{Q}}+{\fnt{Q}}^T = {\fnt{B}}, \qquad 
{\fnt{B}} = \bmat{
-1 & & & \\
& 0 & & \\
& & \ddots & \\
& & & 1 
}.
\label{eq:fvQ}
}
We note that nodal discontinuous Galerkin spectral element (DG-SEM) discretizations \cite{kopriva2009implementing} also fall into a SBP framework \cite{gassner2013skew} and are thus also included in this framework. Since the finite volume methods described in the previous section can be interpreted as DG methods with polynomial degree $p = 0$, they also fall into this framework. 

These matrices can be used to construct entropy conservative high order discretizations.  Let $J = h/2$ be the Jacobian of the mapping from the reference element $[-1,1]$ to a physical interval of size $h$ and let $\fnt{F}^k_{ij} = f_S(\fnt{u}_{i,k},\fnt{u}_{j,k})$ denote the matrix of flux interactions between different nodes on the element $D^k$.  A local formulation on the element $D^k$ is given by
\begin{gather}
J_k {\fnt{M}} \td{\fnt{u}_k}{t} + 2\LRp{{\fnt{Q}} \circ \fnt{F}^k}\fnt{1} + \fnt{B}\LRp{\fnt{f}^*-f(\fnt{u}_k)}  = \fnt{0}, \nonumber\\
\fnt{f}^* = \bmat{
f_S(\fnt{u}_{1,k}^+,\fnt{u}_{1,k})\\
0\\
\vdots\\
0\\
f_S(\fnt{u}_{N_p,k}^+,\fnt{u}_{N_p,k})
}.
\label{eq:sbp1D}
\end{gather}
where $\fnt{u}_{1,k}^+, \fnt{u}_{N_p,k}^+$ denote the exterior values of $\fnt{u}_{1,k}, \fnt{u}_{N_p,k}$ on neighboring elements.  Assuming that the elements are ordered from left to right in ascending order, for interior element indices $1 < k < K$, these are given by
\[
\fnt{u}_{1,k}^+ = \fnt{u}_{N_p,k-1}, \qquad \fnt{u}_{N_p,k}^+ = \fnt{u}_{1,k+1}.
\]
In other words, the first node on $D^k$ is connected to the last node on the previous element, and the last node on $D^k$ is connected to the first node on the next element.

For periodic boundary conditions this local formulation can be understood as inducing a global skew-symmetric matrix.  To show this, we first use the SBP property to rewrite (\ref{eq:sbp1D}) in a skew-symmetric form \cite{chan2019skew}
\eq{
J_k {\fnt{M}} \td{\fnt{u}_k}{t} + \LRp{\LRp{\fnt{Q}-\fnt{Q}^T} \circ \fnt{F}^k}\fnt{1} + \fnt{B}\fnt{f}^* = \fnt{0}.
}
We now define a global vector $\fnt{u}_{\Omega} = \LRs{\fnt{u}_1, \fnt{u}_2, \ldots, \fnt{u}_K}^T$.  Let the global flux matrix be defined as
\[
\fnt{F} = \bmat{
\fnt{F}_{11} & \ldots & \fnt{F}_{1K}\\
\vdots & \ddots & \vdots\\
\fnt{F}_{K1} & \ldots & \fnt{F}_{KK}
}, \qquad \LRp{\fnt{F}_{k_1,k_2}}_{ij} = f_S(\fnt{u}_{i,k_1}, \fnt{u}_{j,k_2}).
\]
The blocks of the matrix $\fnt{F}$ capture flux interactions between solution values at different nodes and elements.  
The local formulations can now be concatenated into a single skew-symmetric matrix
\eqlab{
\fnt{M}_{\Omega}\td{\fnt{u}_{\Omega}}{t} + 2\LRp{\fnt{Q}_{\Omega}\circ \fnt{F}}\fnt{1} = \fnt{0},
\label{eq:sbpform}
}
where $\fnt{M}_{\Omega}$ is the block-diagonal matrix with blocks $J_k\fnt{M}$, and 
\eqlab{
\fnt{Q}_{\Omega} = \frac{1}{2}\bmat{
\fnt{S} &\fnt{B}_R & & -\fnt{B}_L\\
-\fnt{B}_L& \fnt{S} & \fnt{B}_R&\\
& -\fnt{B}_L & \ddots & \fnt{B}_R\\
\fnt{B}_R& & -\fnt{B}_L & \fnt{S}
}, \qquad \fnt{S} = \LRp{\fnt{Q}-\fnt{Q}^T},
\label{eq:sbpmat}
}
where the matrices $\fnt{B}_L,\fnt{B}_R$ are zeros except for a single entry
\eqlab{
\fnt{B}_L = \bmat{
 & &1\\
& \iddots & \\
0 & & 
}, \qquad \fnt{B}_R = \fnt{B}_L^T = \bmat{
 & & 0\\
& \iddots & \\
1 & & 
}
\label{eq:BLBR}
}
The matrix $\fnt{Q}_{\Omega}$ can be considered a high order generalization of the finite volume matrix (\ref{eq:fvQ}).  Similar ``global SBP operator'' approaches were used to construct simultaneous approximation (SBP-SAT) interface coupling terms in \cite{crean2018entropy, chan2018efficient, fernandez2019entropy}. 

\subsubsection{\rnote{Non-periodic boundary conditions}}
\label{sec:nonperiodic}

\rnote{For non-periodic domains, the structure of the global differentiation matrix $\fnt{Q}_{\Omega}$ changes. For finite volume, DG, and multi-block SBP methods, boundary conditions are typically imposed by specifying appropriate ``exterior'' values $\fnt{u}_{i,k}^+$ in flux expressions such as (\ref{eq:sbp1D}). The resulting formulation is a small modification of (\ref{eq:sbpform})
\eq{
\fnt{M}_{\Omega}\td{\fnt{u}_{\Omega}}{t} + 2\LRp{\fnt{Q}_{\Omega}\circ \fnt{F}}\fnt{1} + \fnt{B}_{\Omega}\fnt{f}_{\Omega}^* = \fnt{0},
}
where $\fnt{Q}_{\Omega}$, $\fnt{B}_{\Omega}$, and $\fnt{f}_{\Omega}^*$ are now given by
\begin{gather*}
\fnt{Q}_{\Omega} = \frac{1}{2}\bmat{
\fnt{S} &\fnt{B}_R & & \\
-\fnt{B}_L& \fnt{S} & \fnt{B}_R&\\
& -\fnt{B}_L & \ddots & \fnt{B}_R\\
& & -\fnt{B}_L & \fnt{S}
}, \qquad \fnt{B}_{\Omega} = \bmat{
-\fnt{B}_L &&&\\
&\fnt{0}&&\\
&&\ddots &\\
&&&\fnt{B}_R}, \qquad 
\fnt{f}_{\Omega}^* = \bmat{
f_S(\fnt{u}_{1,1},\fnt{u}_{1,1}^+)\\
0\\
\vdots\\
f_S(\fnt{u}_{N_p,K},\fnt{u}_{N_p,K}^+)
}.
\end{gather*}
Here, $\fnt{u}_{1,1}^+$ and $\fnt{u}_{N_p,K}^+$ denote the exterior values at the left and right endpoints, respectively.  Since $\fnt{Q}_{\Omega}$ is still skew-symmetric, we can reuse the formulas from Theorem~\ref{thm:explicitJ}. The term $\fnt{B}_{\Omega}\fnt{f}_{\Omega}^*$ can be differentiated efficiently using AD, since $\fnt{B}_{\Omega}$ is a sparse diagonal matrix and $\fnt{B}_{\Omega}\fnt{f}_{\Omega}^*$ is a vector whose few nonzero terms are straightforward scalings of flux evaluations. 
}

%\rnote{TODO: ADD} 

%For conciseness, we omit the assembly of multi-dimensional global differentiation matrices in this work, but note that the assembly of multi-dimensional DG matrices is discussed in detail elsewhere \cite{persson2008newton, hillewaert2011sharp, kempf2018automatic}.  

\section{Systems of conservation laws}
\label{sec:systems}

In this section, we extend the Jacobian formulas of Theorem~\ref{thm:explicitJ} from scalar nonlinear conservation laws to an $n\times n$ system of conservation laws.  Let $\bm{f}_{S}(\bm{u}_L,\bm{u}_R): \mathbb{R}^{n}\times \mathbb{R}^n \rightarrow \mathbb{R}^n$ denote an entropy conservative flux function for a 1D system of conservation laws.  We first formulate a system of ODEs by modifying the definition of the arrays and matrices in (\ref{eq:sbpmat}).

Let $\fnt{u}_{\Omega}$ denote a vector of vectors 
\eqlab{
\fnt{u}_{\Omega} = \bmat{\fnt{u}_1\\
\fnt{u}_2\\
\vdots\\
\fnt{u}_n}, \qquad \fnt{u}_i = \bmat{
\fnt{u}_{i,1}\\
\fnt{u}_{i,2}\\
\vdots\\
\fnt{u}_{i,K}
}, \qquad
\fnt{u}_{i,k} = \bmat{
\fnt{u}_{i,k,1}\\
\fnt{u}_{i,k,2}\\
\vdots\\
\fnt{u}_{i,k,N_p}
}
\label{eq:uordering}
}
Here, $\fnt{u}_{\ell,k,j}$ denotes the $j$th degree of freedom for the $\ell$th component of the solution on the $k$th element \bnote{(for $\ell = 1,\ldots, n$, $k = 1,\ldots, K$ and $j = 1,\ldots, N_p$)}.  
Let $(k_1, j_1)$ and $(k_2, j_2)$ be multi-indices which correspond to row and columns indices of a matrix, respectively.  
\bnote{We define the block-diagonal flux matrix $\fnt{F}$ consisting of $n$ diagonal blocks $\fnt{F}_i \in \mathbb{R}^{N_p K\times N_p K}$} as
\eqlab{
\fnt{F} = \bmat{
\fnt{F}_1 &&\\
& \ddots &\\
&& \fnt{F}_n
}, \qquad \LRp{\fnt{F}_\ell}_{(k_1, j_1),(k_2,j_2)} = \LRp{\bm{f}_{S}\LRp{\fnt{u}_{:, k_1,j_1},\fnt{u}_{:, k_2,j_2}}}_{\ell}.
\label{eq:fluxsys}
}
where $\fnt{u}_{:,k,j}$ denotes the vector containing all solution components at the $k$th element and $j$th node, and each entry of the block $\fnt{F}_\ell$ for $\ell = 1,\ldots, n$ corresponds to the $\ell$th component of the vector-valued flux evaluated at solution states $\fnt{u}_{:,k_1,j_1},\fnt{u}_{:,k_2,j_2}$.  

Let $\fnt{M}_{\Omega}, \fnt{Q}_{\Omega}$ denote the global mass and differentiation matrices in (\ref{eq:sbpmat}).  Then, an entropy conservative scheme is given by
\[
\LRp{\fnt{I}_n\otimes \fnt{M}_{\Omega}}\td{\fnt{u}_{\Omega}}{t} + 2\LRp{ \LRp{\fnt{I}_n\otimes \fnt{Q}_{\Omega} } \circ \fnt{F}}\fnt{1} = \fnt{0}.
\]
where $\fnt{I}_n\in \mathbb{R}^n$ is the $n\times n$ identity matrix.

We now provide Jacobian matrix formulas for systems of nonlinear conservation laws.  The proofs are straightforward extensions of the proof of Theorem~\ref{thm:explicitJ} to the vector-valued case, and we omit them for conciseness. The right hand side function $\fnt{r}(\fnt{u})$ for systems can be rewritten as
\[
\fnt{r}(\fnt{u}) = 2\LRp{ \LRp{\fnt{I}_n\otimes\fnt{Q}_{\Omega}} \circ \fnt{F}}\fnt{1} 
= 2\bmat{(\fnt{Q}_{\Omega}\circ \fnt{F}_1) \\ 
\vdots\\
(\fnt{Q}_{\Omega}\circ \fnt{F}_n)
}\fnt{1}.
\]
Then, the Jacobian matrix is 
\eqlab{
\pd{\fnt{r}}{\fnt{u}} &=
\bmat{
\fnt{\partial F}_{1,\fnt{u}_1} & \ldots & \fnt{\partial F}_{1,\fnt{u}_n}\\
\vdots & \ddots & \vdots\\
\fnt{\partial F}_{n,\fnt{u}_1} & \ldots & \fnt{\partial F}_{n,\fnt{u}_n}
}\label{eq:ecjac}
}
where each Jacobian block $\fnt{\partial F}_{i,\fnt{u}_j}$ is evaluated as in Theorem~\ref{thm:explicitJ} 
\eq{
\fnt{\partial F}_{i,\fnt{u}_j} &= 2 \LRp{\fnt{Q}_{\Omega} \circ \fnt{F}_{i,\fnt{u}_j}} \pm \diag{\fnt{1}^T\LRp{2\fnt{Q}_{\Omega}\circ \fnt{F}_{i,\fnt{u}_j}}}\\
\fnt{\partial F}_{i,\fnt{u}_j} &= 2 \LRp{\fnt{Q}_{\Omega} \circ \fnt{F}_{\fnt{u}_i,j}^T} \pm \diag{\LRp{2\fnt{Q}_{\Omega}\circ \fnt{F}_{\fnt{u}_i,j}}\fnt{1}}.
}
for $\fnt{Q}_{\Omega} = \pm \fnt{Q}_{\Omega}^T$.  
Here, the flux matrix $\fnt{F}_{i,\fnt{u}_j}$ is evaluated via one of two formulas
\eq{
\LRp{\fnt{F}_{i,\fnt{u}_j}}_{(j_1,k_1),(j_2,k_2)} = \LRl{\pd{\LRp{\bm{f}_S}_i}{\bm{u}_{R,j}}}_{\fnt{u}_{:,k_1,j_1},\fnt{u}_{:,k_2,j_2}}\\
\LRp{\fnt{F}_{\fnt{u}_i,j}}_{(j_1,k_1),(j_2,k_2)} = \LRl{\pd{\LRp{\bm{f}_S}_i}{\bm{u}_{L,j}}}_{\fnt{u}_{:,k_1,j_1},\fnt{u}_{:,k_2,j_2}},
}
where $\pd{\LRp{\bm{f}_S}_i}{\bm{u}_{L,j}}, \pd{\LRp{\bm{f}_S}_i}{\bm{u}_{R,j}}$ denote the derivatives of the $i$th component of the flux $\bm{f}_S\LRp{\bm{u}_L,\bm{u}_R}$ with respect to the $j$th solution component of $\fnt{u}_L, \fnt{u}_R$.  Thus, each entry of the block $\fnt{F}_{i,\fnt{u}_j}$ corresponds to an entry of the Jacobian (with respect to $\bm{u}_L$ or $\bm{u}_R$) of $\bm{f}_S(\bm{u}_L,\bm{u}_R)$ and an entry of the global differentiation matrix $\fnt{Q}_{\Omega}$.

\begin{remark}
\bnote{The ordering in this paper is chosen for notational convenience. In practice, other orderings typically yield more efficient solution procedures. 
For example, ordering the degrees of freedom by variable fields (as in \cite{crean2018entropy}) yields a Jacobian matrix with a more compact bandwidth, while also making it easier to use a block compressed storage format for sparse matrices and block-based preconditioners.}
\end{remark}

\section{Extension to entropy stable (dissipative) schemes}

We now consider entropy stable schemes, which include entropy dissipation terms to produce a semi-discrete dissipation (rather than conservation) of entropy. These can correspond either to physical or artificial viscosity mechanisms \cite{gassner2017br1,upperman2019entropy} or numerical interface dissipation \cite{winters2017uniquely}. Because Jacobian matrices for artificial viscosity mechanisms have been discussed in more detail in the time-implicit literature \cite{persson2006sub} we focus instead on numerical interface dissipation. 

Let $\bm{d}_S\LRp{\bm{u}_L,\bm{u}_R}$ be an entropy dissipative anti-symmetric flux such that
\[
\bm{d}_S\LRp{\bm{u}_L,\bm{u}_R} = -\bm{d}_S\LRp{\bm{u}_R, \bm{u}_L}, \qquad
\LRp{\bm{v}_L-\bm{v}_R}^T\bm{d}_S\LRp{\bm{u}_L,\bm{u}_R} \geq 0.
\]
Note that the anti-symmetry of $\bm{d}_S$ implies that $\bm{d}_S(\bm{u},\bm{u}) = \bm{0}$. Fluxes which fall into this category include the Lax-Friedrichs flux 
\[
\bm{d}_S(\bm{u}_L,\bm{u}_R) = \frac{\LRb{\lambda}}{2}(\bm{u}_L-\bm{u}_R), \qquad \lambda = \text{estimate of maximum wavespeed},
\]
as well as HLLC fluxes \cite{chen2017entropy} and matrix dissipation fluxes \cite{winters2017uniquely}. 

\subsection{Scalar dissipative fluxes}

We will begin by considering scalar dissipative fluxes $d_S(u_L,u_R)$ and dissipation terms of the form 
\[
\fnt{d}(\fnt{u}) = \LRp{\fnt{B}\circ\fnt{D}}\fnt{1}
\]
\note{where $\fnt{B}$ is a symmetric non-negative matrix} and the entries of $\fnt{D}_{ij} = d_S(\fnt{u}_i,\fnt{u}_j)$ correspond to evaluations of the dissipative flux. For the high order DG-SBP discretizations of periodic domains described in (\ref{eq:sbpform}), $\bm{B}_{\Omega}$ is the matrix
\eqlab{
\fnt{B} = \frac{1}{2}\bmat{
 &\fnt{B}_R & & \fnt{B}_L\\
\fnt{B}_L& & \fnt{B}_R&\\
& \fnt{B}_L & \ddots & \fnt{B}_R\\
\fnt{B}_R& & \fnt{B}_L &}
}
where $\fnt{B}_L,\fnt{B}_R$ are defined as in (\ref{eq:BLBR}).

To compute the Jacobian of this term, we can note that Theorem~\ref{thm:explicitJ} assumes that the discretization matrix is skew-symmetric (or symmetric), while the flux matrix is symmetric. Here, the orders are reversed --- the flux matrix $\bm{D}$ is now skew-symmetric, while the discretization matrix $\bm{B}_{\Omega}$ is symmetric. Thus, repeating the steps of the proof of Theorem~\ref{thm:explicitJ}, one can show that the Jacobians of the dissipative term can be computed using one of two formulas.

\begin{theorem} 
\label{eq:explicitJ_diss}
Let $\fnt{d}(\fnt{u}) = \LRp{\fnt{B}\circ\fnt{D}}\fnt{1}$, where $\fnt{B}$ is a symmetric matrix, $\fnt{D}_{ij} = d_S(\fnt{u}_i,\fnt{u}_j)$, and $d_S$ is an anti-symmetric bivariate function. Then, 
\eqlab{
\pd{\fnt{d}}{\fnt{u}} &= -(\fnt{B}\circ\fnt{D}_x^T) + \diag{\LRp{\fnt{B}\circ \fnt{D}_x^T}\fnt{1}},\label{eq:dissJac}\\
\pd{\fnt{d}}{\fnt{u}} &= (\fnt{B}\circ\fnt{D}_y) - \diag{\fnt{1}^T\LRp{\fnt{B}\circ \fnt{D}_y}}\nonumber
}
where the matrices $\fnt{D}_x,\fnt{D}_y$ are
\[
\LRp{\fnt{D}_x}_{ij} = \LRl{\pd{{d}_S}{{u}_L}}_{\fnt{u}_i,\fnt{u}_j}, \qquad \LRp{\fnt{D}_y}_{ij} = \LRl{\pd{{d}_S}{{u}_R}}_{\fnt{u}_i,\fnt{u}_j}.
\]
\end{theorem}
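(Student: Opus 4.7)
The plan is to mimic the entrywise chain rule argument used in the proof of Theorem~\ref{thm:explicitJ}, but with the symmetry roles reversed: the discretization matrix $\fnt{B}$ is now symmetric and the flux matrix $\fnt{D}$ is anti-symmetric. The key identity I need is the derivative form of the anti-symmetry $d_S(a,b) = -d_S(b,a)$, which after differentiating in the first argument gives
\[
\LRl{\pd{d_S}{x}}_{a,b} = -\LRl{\pd{d_S}{y}}_{b,a}.
\]
This will play the role that the analogous identity $\partial_x f_S|_{a,b} = \partial_y f_S|_{b,a}$ played in the proof of Theorem~\ref{thm:explicitJ}.

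First, I would expand $\fnt{d}_i = \sum_k \fnt{B}_{ik}\, d_S(\fnt{u}_i,\fnt{u}_k)$ and compute $\pd{\fnt{d}_i}{\fnt{u}_j}$ using the chain rule, splitting into contributions from $\pd{\fnt{u}_i}{\fnt{u}_j}$ and $\pd{\fnt{u}_k}{\fnt{u}_j}$. For the off-diagonal case $i\neq j$, only $k=j$ survives in the sum, yielding
\[
\pd{\fnt{d}_i}{\fnt{u}_j} = \fnt{B}_{ij}\LRl{\pd{d_S}{y}}_{\fnt{u}_i,\fnt{u}_j} = \LRp{\fnt{B}\circ\fnt{D}_y}_{ij}.
\]
This already matches the off-diagonal entries of the second formula in the theorem, and by a parallel manipulation it matches $-(\fnt{B}\circ\fnt{D}_x^T)_{ij}$ (via the same derivative-of-anti-symmetry identity), which is the off-diagonal content of the first formula.

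The heart of the proof is the diagonal case $i=j$, which produces both a single leftover term $\fnt{B}_{ii}(\partial_y d_S)(\fnt{u}_i,\fnt{u}_i)$ and a sum $\sum_k \fnt{B}_{ik}(\partial_x d_S)(\fnt{u}_i,\fnt{u}_k)$. The single term already equals $(\fnt{B}\circ\fnt{D}_y)_{ii}$, so it completes the $(\fnt{B}\circ\fnt{D}_y)$ part of the formula on the diagonal. For the sum, I would apply the derivative anti-symmetry identity to replace $\partial_x d_S|_{\fnt{u}_i,\fnt{u}_k}$ by $-\partial_y d_S|_{\fnt{u}_k,\fnt{u}_i}$ and then use the symmetry $\fnt{B}_{ik}=\fnt{B}_{ki}$ to rewrite
\[
\sum_k \fnt{B}_{ik}\LRl{\pd{d_S}{x}}_{\fnt{u}_i,\fnt{u}_k} = -\sum_k \fnt{B}_{ki}\LRl{\pd{d_S}{y}}_{\fnt{u}_k,\fnt{u}_i} = -\LRp{\fnt{1}^T\LRp{\fnt{B}\circ\fnt{D}_y}}_i.
\]
Combining the diagonal and off-diagonal pieces reproduces the second formula. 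The first formula follows from the same diagonal manipulation but carried out in the opposite direction, replacing the $\partial_y$ sum by a $\partial_x$ sum and then recognizing the result as $(\fnt{B}\circ\fnt{D}_x^T)\fnt{1}$.

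I expect the main obstacle to be sign bookkeeping: unlike in Theorem~\ref{thm:explicitJ}, the minus sign introduced by anti-symmetry of $d_S$ no longer has a compensating $\pm$ from $\fnt{Q}=\pm\fnt{Q}^T$, so the net sign of the diagonal correction flips. A secondary subtlety worth stating explicitly is that $d_S(\fnt{u}_i,\fnt{u}_i)=0$ by anti-symmetry, so the value of $\fnt{D}$ on the diagonal is zero even though its derivative entries $(\fnt{D}_x)_{ii},(\fnt{D}_y)_{ii}$ need not vanish; this is what allows the leftover term $\fnt{B}_{ii}(\partial_y d_S)(\fnt{u}_i,\fnt{u}_i)$ to be absorbed cleanly into the $(\fnt{B}\circ\fnt{D}_y)$ part of the Jacobian formula without a separate diagonal correction.
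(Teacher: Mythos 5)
Your proposal is correct and follows essentially the same route as the paper's proof: an entrywise chain-rule expansion, the off-diagonal/diagonal case split, and the use of the differentiated anti-symmetry identity $\partial_x d_S|_{a,b} = -\partial_y d_S|_{b,a}$ together with $\fnt{B}_{ik}=\fnt{B}_{ki}$ to convert the diagonal sum into $-\LRp{\fnt{1}^T(\fnt{B}\circ\fnt{D}_y)}_i$, with the first formula obtained from $\fnt{D}_y=-\fnt{D}_x^T$. The only quibble is that your closing remark attributes the absorption of $\fnt{B}_{ii}\,\partial_y d_S|_{\fnt{u}_i,\fnt{u}_i}$ to $d_S(\bm{u},\bm{u})=\bm{0}$, whereas it holds simply because that term is by definition $(\fnt{B}\circ\fnt{D}_y)_{ii}$; this does not affect correctness.
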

\begin{proof}
We will prove the second formula in (\ref{eq:dissJac}) involving $\fnt{D}_y$ using the same approach as the proof of Theorem~\ref{thm:explicitJ}. The proof of the first formula results from the fact that $\fnt{D}_y = -\fnt{D}_x^T$ by the anti-symmetry of $d_S(u_L,u_R)$. Applying the chain rule yields
\eq{
\pd{\fnt{d}_i}{\fnt{u}_j} &= \sum_{k} \fnt{B}_{ik} \LRp{\LRl{ \pd{d_S}{x}}_{\fnt{u}_i,\fnt{u}_k}\pd{\fnt{u}_i}{\fnt{u}_j} + \LRl{\pd{d_S}{y}}_{\fnt{u}_i,\fnt{u}_k} \pd{\fnt{u}_k}{\fnt{u}_j}}
}
If $i\neq j$, then $\pd{\fnt{u}_i}{\fnt{u}_j} = 0$ and the sum reduces to the single term $k = j$
\eq{
\pd{\fnt{d}_i}{\fnt{u}_j} &= \fnt{B}_{ij} \LRl{\pd{\bnote{d_S}}{y}}_{\fnt{u}_i,\fnt{u}_j}.
}
For $i=j$, $\pd{\fnt{u}_i}{\fnt{u}_j} = 1$.  Using the symmetry of $\fnt{B}$ and anti-symmetry of $d_S$ yields
\eq{
\pd{\fnt{d}_i}{\fnt{u}_i} &= 
\LRp{\sum_{k} \fnt{B}_{ik} \LRl{\pd{d_S}{x}}_{\fnt{u}_i,\fnt{u}_k}} + \fnt{B}_{ii}\LRl{\pd{d_S}{y}}_{\fnt{u}_i,\fnt{u}_i}
=\LRp{-\sum_{k} \fnt{B}_{ik} \LRl{\pd{d_S}{y}}_{\fnt{u}_k,\fnt{u}_i}} + \fnt{B}_{ii}\LRl{\pd{d_S}{y}}_{\fnt{u}_i,\fnt{u}_i}.
}
\end{proof}

\subsection{Vector-valued dissipative fluxes}

For a vector-valued dissipative flux, the dissipative contribution is 
\[
\fnt{d}(\fnt{u}) %= \LRp{ \LRp{\fnt{I}_n\otimes\fnt{B}} \circ \fnt{D}}\fnt{1} 
= \bmat{(\fnt{B}\circ \fnt{D}_1) \\ 
\vdots\\
(\fnt{B}\circ \fnt{D}_n)
}\fnt{1}
\]
where each matrix block $\LRp{\fnt{D}_\ell}_{(j_1,k_1),(j_2,k_2)} = \LRp{\bm{d}_{S}\LRp{\fnt{u}_{:, k_1,j_1},\fnt{u}_{:, k_2,j_2}}}_{\ell}$ corresponds to the $i$th component of the dissipative flux, where $\fnt{u}$ is ordered as in (\ref{eq:uordering}). Then, the Jacobian of $\fnt{d}(\fnt{u})$ yields the following block matrix
\eqlab{
\pd{\fnt{d}}{\fnt{u}} &=
\bmat{
\fnt{\partial D}_{1,\fnt{u}_1} & \ldots & \fnt{\partial D}_{1,\fnt{u}_n}\\
\vdots & \ddots & \vdots\\
\fnt{\partial D}_{n,\fnt{u}_1} & \ldots & \fnt{\partial D}_{n,\fnt{u}_n}
}
}
where each Jacobian block $\fnt{\partial D}_{i,\fnt{u}_j}$ is evaluated as in (\ref{eq:dissJac}) using one of two formulas
\eq{
\fnt{\partial D}_{i,\fnt{u}_j} &=  \LRp{\fnt{B} \circ \fnt{D}_{i,\fnt{u}_j}} - \diag{\fnt{1}^T\LRp{\fnt{B}\circ \fnt{D}_{i,\fnt{u}_j}}} \\
\fnt{\partial D}_{i,\fnt{u}_j} &= -\LRp{\fnt{B} \circ \fnt{D}_{\fnt{u}_i,j}} + \diag{\LRp{\fnt{B}\circ \fnt{D}_{\fnt{u}_i,j}}\fnt{1}}, 
}
where the dissipative flux matrices $\fnt{D}_{i,\fnt{u}_j}, \fnt{D}_{\fnt{u}_i,j}$ are defined in terms of entries of the Jacobian of $\bm{d}_S$
\eq{
\LRp{\fnt{D}_{i,\fnt{u}_j}}_{(j_1,k_1),(j_2,k_2)} &= \LRl{\pd{\LRp{\bm{d}_S}_i}{\bm{u}_{R,j}}}_{\fnt{u}_{:,k_1,j_1},\fnt{u}_{:,k_2,j_2}}\\
\LRp{\fnt{D}_{\fnt{u}_i,j}}_{(j_1,k_1),(j_2,k_2)} &= \LRl{\pd{\LRp{\bm{d}_S}_i}{\bm{u}_{L,j}}}_{\fnt{u}_{:,k_1,j_1},\fnt{u}_{:,k_2,j_2}},
}

\begin{remark}
If the derivative of $\bm{d}_S$ with respect to its second argument $\bm{u}_R$ is used to compute the dissipative flux matrices, then the structure of the dissipative Jacobian is identical to the structure of the entropy conservative Jacobian (\ref{eq:ecjac}). Thus, given discretization matrices $\bm{Q}_{\Omega}, \bm{B}_{\Omega}$ and functions which evaluate derivatives of flux functions $\bm{f}_S, \bm{d}_S$ with respect to their second arguments, the same routine can be used to compute both the entropy conservative and dissipative Jacobians.
\end{remark}
%\section{Generalizations to different numerical settings}

%In this section, we discuss how to generalize the explicit construction of Jacobians to different numerical settings, including non-collocation schemes and non-periodic boundary conditions.  

\section{Non-collocated schemes: hybridized SBP operators, entropy projection, over-integration}

Most entropy stable schemes rely on ``collocated'' SBP operators (where the mass matrix $\bm{M}_{\Omega}$ is diagonal) constructed using nodal sets which include boundary nodes \cite{chen2017entropy, crean2018entropy}.  However, in certain cases energy and entropy stable SBP schemes constructed using non-diagonal mass matrices \cite{chan2017discretely, chan2019entropy} and more general nodal sets \cite{fernandez2014review, ranocha2018generalised, crean2017high, chan2018efficient} achieve higher accuracy than SBP schemes built on nodal sets which include boundary nodes.  We discuss how to extend Jacobian formulas to ``modal'' formulations for entropy conservative schemes (the extension to entropy stable schemes is similar).

\subsection{``Modal'' entropy conservative schemes} % and hybridized SBP operators}

We now assume that the solution is represented using a ``modal'' expansion
\[
u(\bm{x},t) \approx \sum_{\bnote{i}=1}^{N_p} \hat{\fnt{u}}_{k,i}(t) \phi_i(\bm{x}),
\]
where $\hat{\fnt{u}}_{k,i}$ denotes the coefficients of the solution on an element $D^k$.  We assume two sets of quadrature points: volume quadrature points and weights, $\LRc{w_i, \fnt{x}_{q,i}}_{i=1}^{N_q}$, and surface quadrature points, $\LRc{w_{f,i}, \fnt{x}_{f,i}}_{i=1}^{N_f}$.  We assume both quadrature rules are exact for certain classes of integrands as detailed in \cite{chan2019skew, chan2019entropy}.  

Evaluating $\fnt{u}(\fnt{x},t)$ at quadrature points requires multiplication by an interpolation matrix $\fnt{V}$
\eq{
{\fnt{V}}_{ij} = \phi_j(\fnt{x}_i), \qquad i = 1,\ldots, N_q, \qquad j = 1,\ldots, N_p\\
\LRp{\fnt{V}_f}_{ij} = \phi_j(\fnt{x}_{f,i}), \qquad i = 1,\ldots, N_f, \qquad j = 1,\ldots, N_p.
}
We can similarly define mass and projection matrices $\fnt{M}, \fnt{P}$ 
\[
\fnt{M} = \fnt{V}^T\fnt{W}\fnt{V}, \qquad \fnt{P} = \fnt{M}^{-1}\fnt{V}^T\fnt{W},
\]
where $\fnt{W}$ is a diagonal matrix whose entries are the quadrature weights $w_i$.  We also define a face interpolation matrix
\[
\fnt{E} = \fnt{V}_f\fnt{P}
\]
which evaluates the solution at face quadrature points given values at volume quadrature points.  Finally, we define the matrix $\fnt{V}_h$ as the mapping between local coefficients $\hat{\fnt{u}}_k$ and the combined vector of volume and surface quadrature points
\[
\fnt{V}_h = \bmat{\fnt{V} \\ \fnt{V}_f}.
\]
These matrices are involved in the application of hybridized SBP operators (originally referred to as decoupled SBP operators) \cite{chan2017discretely, chenreview}.  We present the main ideas in a 1D setting and refer the reader to \cite{crean2018entropy, chan2017discretely, chan2019skew} for details on multi-dimensional settings.  

Given some modal weak differentiation matrix $\hat{\fnt{Q}}$ which acts on the basis coefficients $\hat{\fnt{u}}_k$, we define a nodal differentiation matrix $\fnt{Q} = \fnt{P}^T\hat{\fnt{Q}}\fnt{P}$.  Then we can define a hybridized SBP operator as 
\[
\fnt{Q}_h = \frac{1}{2}\bmat{\fnt{Q}-\fnt{Q}^T & \fnt{E}^T\fnt{B} \\
-\fnt{B}\fnt{E} & \fnt{B}}, \qquad \fnt{B} = \bmat{-1 & \\ & 1}.  
\]
The operator $\fnt{Q}_h$ can be used to approximate coefficients of the derivative in the basis $\phi_i(\fnt{x})$.  Let $f(u)$ denote some function of $u(\fnt{x})$, and let $\hat{\fnt{u}}$ denote the basis coefficients of $u(\fnt{x})$.  Then, 
\[
\pd{f(u)}{x}\approx \sum_j {\hat{\fnt{f}}}_j \phi_j(\fnt{x}), \qquad \hat{\fnt{f}} = \fnt{M}^{-1}\fnt{V}_h^T\fnt{Q}_h f\LRp{\fnt{V}_h\hat{\fnt{u}}}
\]

We now construct global matrices for the multi-element (periodic) case.  We begin by concatenating the local coefficients $\hat{\fnt{u}}_{k,i}$ into a global coefficient vector $\hat{\fnt{u}}_{\Omega}$.  We also introduce boundary matrices $\fnt{B}, \fnt{B}_L$, and $\fnt{B}_R$ which enforce coupling between different elements and are defined as 
\[
\fnt{B}_L = \bmat{ & 1\\0 &}, \qquad \fnt{B}_R = \bmat{ &0 \\1  &}, \qquad \fnt{B} = \bmat{-1 & \\ & 1}.
\]
In the multi-dimensional case, the entries of $\fnt{B}_L, \fnt{B}_R$ correspond instead to outward normals \bnote{scaled by surface quadrature weights and surface Jacobians (e.g., the area ratios between reference and physical surface elements)} \cite{crean2018entropy, chan2017discretely}.  

We can also adapt $\fnt{Q}_h$ to construct a globally skew-symmetric differentiation matrix (see also \cite{chan2018efficient}).  Define the matrix $\fnt{S} = \fnt{Q}-\fnt{Q}^T$ and define $\fnt{Q}_{\Omega}$ as the global block matrix
\[
\fnt{Q}_{\Omega} = \frac{1}{2}\LRs{\begin{array}{cc|cc|cc|cc}
\fnt{S} &  \fnt{E}^T\fnt{B} & &&& && \\
 - \fnt{B}\fnt{E} &  & & \fnt{B}_R && && -\fnt{B}_L\\ [.2em]
 \hline\\[-1.0em]
&& \fnt{S} &  \fnt{E}^T\fnt{B}&&&& \\
& -\fnt{B}_L &  - \fnt{B}\fnt{E} &&& \fnt{B}_R&&\\
 \hline\\[-1.0em]
&&&& \ddots & \ddots&  \\
&&&   -\fnt{B}_L & \ddots&\ddots&   \ddots&\fnt{B}_R \\
 \hline\\[-1.0em]
&&&&&\ddots& \fnt{S} &  \fnt{E}^T\fnt{B} \\
\fnt{B}_R &&& &&-\fnt{B}_L &  - \fnt{B}\fnt{E} &\\
\end{array}},
\]
We abuse notation and redefine $\fnt{V},\fnt{E},\fnt{P}$ and $\fnt{V}_h$ as \emph{global} interpolation, projection, and extrapolation matrices
\eq{
\fnt{V} &\longrightarrow \fnt{I}_K \otimes \fnt{V}, \qquad \fnt{P} \longrightarrow \fnt{I}_K \otimes \fnt{P} \\ 
\fnt{E} &\longrightarrow \fnt{I}_K \otimes \fnt{E}, \qquad \fnt{V}_h \longrightarrow \fnt{I}_K \otimes \fnt{V}_h
}
Finally, we assume that the global solution $\fnt{u}(\fnt{x},t) \in \mathbb{R}^n$ is vector-valued, and order the solution coefficients as in Section~\ref{sec:systems}.  

It was shown in \cite{parsani2016entropy, chan2017discretely} that when either the mass matrix is non-diagonal or the nodal set does not contain appropriate boundary points, it is necessary to perform an entropy projection (or extrapolation \cite{chenreview}) step to ensure discrete entropy stability.  Let $\bm{v}(\bm{u})$ denote the entropy variables as a function of the conservative variables, and let $\bm{u}(\bm{v})$ denote the inverse mapping.  We define the entropy projected variables $\tilde{\fnt{u}}_{\Omega}$ as
\eqlab{
\tilde{\fnt{u}}_{\Omega} = \bm{u}\LRp{\fnt{V}_h\fnt{P}\bm{v}\LRp{\fnt{V}\hat{\fnt{u}}_{\Omega}}}.
\label{eq:entropyprojection}
}
Let $\fnt{F}$ again denote the block-diagonal flux matrix in (\ref{eq:fluxsys}).  We evaluate each flux block $\fnt{F}_{\ell}$  using the entropy projected variables
\eqlab{
\LRp{\fnt{F}_\ell}_{(k_1, j_1),(k_2,j_2)} = \LRp{\bm{f}_{S}\LRp{\tilde{\fnt{u}}_{:, k_1,j_1},\tilde{\fnt{u}}_{:, k_2,j_2}}}_{\ell}.
\label{eq:fluxell}
}
Then, an entropy conservative method is given by 
\[
\LRp{\fnt{I}_n\otimes \fnt{M}_{\Omega}}\td{\fnt{u}_{\Omega}}{t} + 2\LRp{\fnt{I}_n\otimes \fnt{V}_h}^T\LRp{ \LRp{\fnt{1}_n\fnt{1}_n^T\otimes \fnt{Q}_{\Omega} } \circ \fnt{F}}\fnt{1} = \fnt{0}.
%\fnt{M}_{\Omega}\td{\fnt{u}_{\Omega}}{t} + 2\fnt{V}^T\LRp{ \fnt{Q}_{\Omega} \circ \fnt{F}}\fnt{1} = \fnt{0}.  
\]
where $\fnt{I}_n$ is the $n\times n$ identity matrix and $\fnt{1}_n$ denotes the length $n$ vector of all ones.

\subsection{Jacobian matrices for modal entropy stable schemes}

We redefine the nonlinear term as
\[
\fnt{r}(\hat{\fnt{u}}) = 2\LRp{\fnt{I}_n\otimes \fnt{V}_h}^T\LRp{ \LRp{\fnt{1}_n\fnt{1}_n^T\otimes \fnt{Q}_{\Omega}} \circ \fnt{F}}\fnt{1}.
\]
where the flux matrix $\fnt{F}$ is computed using the entropy projected conservative variables (\ref{eq:entropyprojection}) via (\ref{eq:fluxell}).
Let $\pd{\bm{u}}{\bm{v}}$ and $\pd{\bm{v}}{\bm{u}}$ denote Jacobians of the conservative variables with respect to the entropy variables and vice versa.  These have been explicitly derived for several equations (for example, the Jacobians for the compressible Navier-Stokes equations are given in \cite{hughes1986new}).  

We can compute the Jacobian of $\fnt{r}(\hat{\fnt{u}})$ via the chain rule.  We assume a scalar equation $n=1$ for simplicity, and motivate our approach by considering an entry $i\neq j$ of the Jacobian
\[
\LRp{\pd{\fnt{r}}{\hat{\fnt{u}}_{\Omega}}}_{ij} =  2 \fnt{V}_h^T \pd{}{\LRp{\hat{\fnt{u}}_{\Omega}}_j} \LRp{\LRp{\fnt{Q}_{\Omega}\circ\fnt{F}}\fnt{1}}_i.
\]
We focus on the latter term $\pd{}{\hat{\fnt{u}}_{\Omega}} \LRp{\fnt{Q}_{\Omega}\circ\fnt{F}}\fnt{1}$
\eq{
\LRp{\pd{}{\hat{\fnt{u}}_{\Omega}} \LRp{\fnt{Q}_{\Omega}\circ\fnt{F}}\fnt{1}}_{ij} &= \pd{}{\hat{\fnt{u}}_{\Omega,j}} \sum_{k} \LRp{\fnt{Q}_{\Omega}}_{ik} \bm{f}_S\LRp{\tilde{\fnt{u}}_i,\tilde{\fnt{u}}_k} =  \sum_{k} \LRp{\fnt{Q}_{\Omega}}_{ik} \LRl{\pd{\bm{f}_S}{y}}_{\tilde{\fnt{u}}_i,\tilde{\fnt{u}}_k} \pd{\tilde{\fnt{u}}_i}{\hat{\fnt{u}}_{\Omega,j}}
}
We observe that the term $\pd{\tilde{\fnt{u}}_i}{\hat{\fnt{u}}_{\Omega,j}}$ does not disappear as it did in the proof of Theorem~\ref{thm:explicitJ}.  We thus treat the Jacobian matrix in two parts.  First, we define the ``unassembled'' Jacobian matrix $\pd{\tilde{\fnt{r}}}{\tilde{\fnt{u}}}$ as
\eqlab{
\LRp{\pd{\tilde{\fnt{r}}}{\tilde{\fnt{u}}}}_{ij} = \LRp{\fnt{Q}_{\Omega}}_{ij} \LRl{\pd{\bm{f}_S}{y}}_{\tilde{\fnt{u}}_i,\tilde{\fnt{u}}_j} = \LRp{\fnt{Q}_{\Omega}\circ \fnt{F}_y}_{ij}.
\label{eq:dfdu_unassembled}
}
The construction of $\pd{\tilde{\fnt{r}}}{\tilde{\fnt{u}}}$ for systems ($n > 1$) is carried out using the procedure described in Section~\ref{sec:systems}.
Let $\tilde{\fnt{v}}$ denote the projected entropy variables evaluated at volume quadrature points
\[
\tilde{\fnt{v}} = \fnt{V}_h\fnt{P}\bm{v}\LRp{\fnt{V}\hat{\fnt{u}}_{\Omega}}.
\]
The vector $\pd{\tilde{\fnt{u}}}{\hat{\fnt{u}}_{\Omega}}$  can be further expanded as 
\[
\pd{\tilde{\fnt{u}}}{\hat{\fnt{u}}_{\Omega}} = \LRl{\pd{\bm{u}}{\bm{v}}}_{\tilde{\fnt{v}}} \fnt{V}_h \fnt{P} \LRl{\pd{\bm{v}}{\bm{u}}}_{\fnt{V}\hat{\fnt{u}}_{\Omega}} \bm{V}.
\]
where the Jacobian matrices for the maps between conservative and entropy variables are block diagonal matrices given by
\eqlab{
\LRl{\pd{\bm{u}}{\bm{v}}}_{\tilde{\bnote{\fnt{v}}}} = \bmat{
\LRl{\pd{\bm{u}}{\bm{v}}}_{\tilde{\bnote{\fnt{v}}}_1} &&\\
& \ddots &\\
&& \LRl{\pd{\bm{u}}{\bm{v}}}_{\tilde{\bnote{\fnt{v}}}_K}
}, \qquad \LRl{\pd{\bm{u}}{\bm{v}}}_{\tilde{\fnt{u}}_k} = \bmat{
\LRl{\pd{\bm{u}}{\bm{v}}}_{\tilde{\fnt{u}}_{1,k}} &&\\
 &\ddots &\\
&& \LRl{\pd{\bm{u}}{\bm{v}}}_{\tilde{\fnt{u}}_{N_p,k}}
},
\label{eq:changeofvars}
}
where the local block $\LRl{\pd{\bm{u}}{\bm{v}}}_{\tilde{\fnt{u}}_{j,k}}$ is the Jacobian matrix $\pd{\bm{u}}{\bm{v}}$ evaluated at the $j$th nodal solution value $\tilde{\fnt{u}}_{j,k}$ on the $k$th element.

Let $N_p, N_q$, and $N_f$ denote the number of total basis functions, quadrature points, and face quadrature points respectively, and define $N_{\rm total} = N_q + N_f$.  The structure and dimensions of matrices involved in constructing the ``assembled'' Jacobian matrix are illustrated as follows:
\eqlab{
\pd{\fnt{r}}{\hat{\fnt{u}}_{\Omega}} = \renewcommand\matscale{.55}
\raiserows{2}{\matbox{3}{7}{N_p}{N_{\rm total}}{\fnt{V}_h^T}}
\raiserows{0}{\matbox{7}{7}{N_{\rm total}}{N_{\rm total}}{\fnt{Q}_{\Omega}\circ\pd{\tilde{\fnt{r}}}{\tilde{\fnt{u}}}}}
\raiserows{0}{\matbox{7}{7}{N_{\rm total}}{N_{\rm total}}{\LRl{\pd{\bm{u}}{\bm{v}}}_{\tilde{\fnt{v}}}}}
\raiserows{0}{\matbox{7}{5}{N_{\rm total}}{N_q}{ \fnt{V}_h\fnt{P} }}
\raiserows{1}{\matbox{5}{5}{N_q}{N_q}{ \LRl{\pd{\bm{v}}{\bm{u}}}_{\fnt{V}\hat{\fnt{u}}_{\Omega}} }}
\raiserows{1}{\matbox{5}{3}{N_q}{N_p}{\fnt{V}}}
\label{eq:dfdu_assembled}
}

\begin{figure}
\centering
\subfloat[``Unassembled'' Jacobian matrix (\ref{eq:dfdu_unassembled})]{\includegraphics[height=.25\textheight]{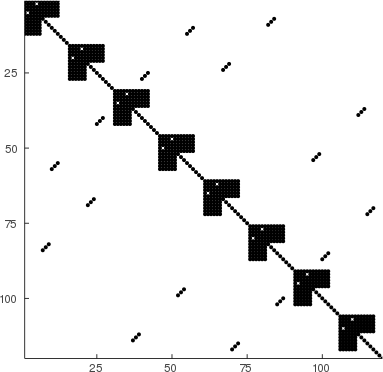}}
\hspace{3em}
\subfloat[``Assembled'' Jacobian matrix (\ref{eq:dfdu_assembled})]{\includegraphics[height=.25\textheight]{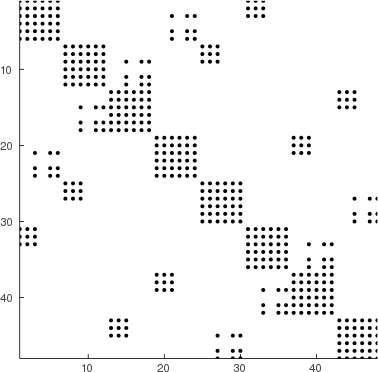}}
\caption{Spy plots of assembled and unassembled Jacobian matrices for Burgers' equation for $N=2$ on a $2\times 2$ uniform triangular mesh of $[-1,1]^2$.}
\label{fig:spyplots}
\end{figure}

``Unassembled'' and ``assembled'' Jacobian matrices (\ref{eq:dfdu_unassembled}) and (\ref{eq:dfdu_assembled}) for an entropy conservative discretization of a 2D Burgers' equation \cite{chan2017discretely} are shown in Figure~\ref{fig:spyplots}.  We note that the structure of these matrices becomes simplified under common assumptions for entropy stable discretizations.  The most common assumptions are either ``collocated volume nodes'' or ``collocated volume and surface nodes'' \cite{shadpey2019energy}.  When volume nodes are collocated, the solution is represented using a nodal Lagrange basis constructed using $N_q = N_p$ volume quadrature nodes.\footnote{This definition refers to discretizations which utilize an explicit basis. For entropy stable SBP discretizations, volume nodes are typically collocated by construction. This is possible because nodal degrees of freedom for SBP discretizations do not necessarily correspond to a nodal basis.}  When surface nodes are collocated, the surface quadrature points are also a subset of the volume quadrature nodes \cite{gassner2013skew, chen2017entropy}.  

If only volume nodes are collocated \cite{chan2018efficient}, then $\fnt{V} = \fnt{I}$.  If both volume and surface nodes are collocated, the system reduces to the simplified system described in Section~\ref{sec:systems} using the fact that $\pd{\bm{u}}{\bm{v}} = \LRp{\pd{\bm{v}}{\bm{u}}}^{-1}$.

%\subsection{Non-periodic boundary conditions}
%\label{eq:npbc}
%
%Finally, we consider the case of non-periodic boundary conditions.  \note{Impose BCs by specifying exterior value.}

\section{Numerical experiments}

In this section, we verify our theoretical results and compare the computational efficiency of the formulas derived in this paper with other methods for computing the Jacobian. Additional numerical experiments are included in Section~\ref{sec:tdrk} and \ref{sec:implicit}.

\subsection{\bnote{Verification of Jacobian formulas}}

\bnote{We begin by verifying the correctness of Theorem~\ref{thm:explicitJ} and its extension to systems of nonlinear conservation laws. We do so by comparing these formulas to Jacobians computed directly using automatic differentiation for $\fnt{r}(\fnt{u}) = (\fnt{Q}\circ\fnt{F})\fnt{1}$, where $\fnt{Q}$ is a randomly generated symmetric or skew-symmetric matrix and $\fnt{F}$ is the flux matrix defined in (\ref{eq:fu}) for scalar fluxes and (\ref{eq:fluxsys}) for systems. 

We compute Jacobians for three different fluxes. The first is the entropy conservative flux for Burgers' equation 
\[
f_S(u_L,u_R) = \frac{1}{6}\LRp{u_L^2 + u_Lu_R + u_R^2}.
\]

The second set of fluxes are entropy conservative fluxes for the two-dimensional shallow water equations with solution fields $h, hu, hv$ corresponding to water height and $x,y$ momentum \cite{fjordholm2011well, wintermeyer2017entropy}. Let the average be defined as $\avg{u} = \frac{u_L + u_R}{2}$. The two-dimensional shallow water fluxes $\bm{f}_{i,S}$ for each coordinate direction $i = 1,\ldots, d$ are given by 
\[
\bm{f}_{1,S} = \bmat{
\avg{hu}\\
\avg{hu}\avg{u} + \frac{g}{2} h_L h_R\\
\avg{hu}\avg{v}
}, \qquad 
\bm{f}_{2,S} = \bmat{
\avg{hv}\\
\avg{hv}\avg{u}\\
\avg{hv}\avg{v} + \frac{g}{2} h_L h_R
}.
\]

The third set of fluxes are kinetic energy preserving and entropy conservative fluxes for the 3D compressible Euler equations \cite{chandrashekar2013kinetic}. The solution fields are $\rho, \rho u, \rho v, \rho w, E$, corresponding to density, $x/y/z$ momentum, and total energy. Let $\avg{\cdot}^{\log}$ denote the logarithmic mean 
\[
\avg{u}^{\log} = \frac{u_R - u_L}{\log\LRp{u_R}-\log\LRp{u_L}},
\]
which we compute using the numerically stable expansion of \cite{winters2019entropy} with $\gamma = 1.4$. The fluxes $\bm{f}_{i,S}$ for each coordinate direction $i = 1,\ldots, d$ are then given by
\begin{gather*}
\bm{f}_{1,S} = \LRp{\begin{array}{c}
\avg{\rho}^{\log}\avg{u}\\
\avg{\rho}^{\log}\avg{u}^2 + p_{\rm avg}\\
\avg{\rho}^{\log}\avg{u}\avg{v}\\
\avg{\rho}^{\log}\avg{u}\avg{w}\\
(E_{\rm avg}+ p_{\rm avg})\avg{u}\\
\end{array}}, 
\qquad 
\bm{f}_{2,S} = \LRp{\begin{array}{c}
\avg{\rho}^{\log}\avg{v}\\
\avg{\rho}^{\log}\avg{u}\avg{v}\\
\avg{\rho}^{\log}\avg{v}^2 + p_{\rm avg}\\
\avg{\rho}^{\log}\avg{v}\avg{w}\\
(E_{\rm avg}+ p_{\rm avg})\avg{v}\\
\end{array}},\\
\bm{f}_{3,S} = \LRp{\begin{array}{c}
\avg{\rho}^{\log}\avg{w}\\
\avg{\rho}^{\log}\avg{u}\avg{w}\\
\avg{\rho}^{\log}\avg{v}\avg{w}\\
\avg{\rho}^{\log}\avg{w}^2 + p_{\rm avg}\\
(E_{\rm avg}+ p_{\rm avg})\avg{u_3}\\
\end{array}},\nonumber
\end{gather*}
where the auxiliary quantities are defined as
\begin{gather*}
\beta = \frac{\rho}{2p}, \qquad p_{\rm avg} = \frac{\avg{\rho}}{2\avg{\beta}}, \qquad E_{\rm avg} = \frac{\avg{\rho}^{\log}}{2(\gamma-1)\avg{\beta}^{\log}} + \frac{1}{2}\avg{\rho}^{\log}\note{{u}^2_{\rm avg}}, \\
{u^2_{\rm avg}} = u_{L}u_{R} + v_{L}v_{R} + w_{L}w_{R}.\nonumber
\end{gather*}

We also verify Theorem~\ref{eq:explicitJ_diss} for each system by computing the Jacobian matrix for the dissipative Lax-Friedrichs flux 
\[
\bm{d}_S(\bm{u}_L,\bm{u}_R) = \frac{\lambda_{\max}}{2}\LRp{\bm{u}_L - \bm{u}_R}
\]
where $\lambda_{\max}$ is an estimate of the maximum 1D wavespeed between $\bm{u}_L, \bm{u}_R$ along some unit vector $\bm{n}$ (e.g., the outward normal). In all cases, we take $\lambda_{\max} = \max\LRp{\lambda(\bm{u}_L),\lambda(\bm{u}_R)}$, where $\lambda(\bm{u})$ is an upper bound on the wavespeed. For both shallow water and Euler, $\lambda(\bm{u}) = |u_n| + c$, where $u_n$ is the normal component of velocity and $c$ is the speed of sound. For shallow water, $c = \sqrt{gh}$, while for Euler, $c= \sqrt{\gamma p/\rho}$. 
%For all tests, we take $\bm{n}$ to be a randomly generated unit vector. 

Table~\ref{tab:diffs} shows differences between Jacobian matrices computed using automatic differentiation and using formulas in Theorems~\ref{thm:explicitJ} and \ref{eq:explicitJ_diss}. Discretization matrices of size $25\times 25$ and corresponding solution vectors were generated randomly from a normal distribution. For the shallow water and Euler equations, positive solution values (e.g., water height for shallow water, density and pressure for Euler) were generated from a uniform distribution over $(0,1)$. For Jacobians of dissipative fluxes, the normal vector is taken to be a random unit vector. In all cases, the difference close to machine precision.
%For Jacobians of dissipative fluxes, the normal vector is taken to be one of the canonical basis vectors in $d$ dimensions, and the reported difference is the sum of the differences over all canonical basis vectors. For $\fnt{r}(\fnt{u}) = \LRp{\fnt{Q}\circ\fnt{F}}\fnt{1}$ with $\fnt{Q}$ skew-symmetric and $\bm{f}_S$ entropy conservative, we surprisingly observe zero difference up to sixteen digits between the two methods of computing Jacobian matrices. For dissipative Jacobian matrices computed from Theorem~\ref{eq:explicitJ_diss}, we observe differences close to machine precision. 
\begin{table}[!h]
\centering
\begin{tabular}{c|c|c|c|c|c}
Burgers' & Shallow water & Euler & LF (Burgers) & LF (SWE) & LF (Euler) \\
\hline
1.56616230e-15 & 9.17858305e-13 & 2.62285783e-14 & 2.03313333e-14  & 3.05403043e-12 & 5.04444613e-14 \\
\end{tabular}
\caption{Computed differences between Jacobian matrices of $\fnt{r}(\fnt{u})$ (measured in the Frobenius norm) when computed using AD and formulas from Theorems~\ref{thm:explicitJ} and \ref{eq:explicitJ_diss}. LF refers to the ``Lax-Friedrichs'' flux.}
\label{tab:diffs}
\end{table}

%A Julia code which reproduces these results is included in the Supplementary Materials. %This code also compares differences between Jacobian matrices computed through automatic differentiation and through formulas in Theorems~\ref{thm:explicitJ} and \ref{eq:explicitJ_diss} for several other cases (1D shallow water, 1D and 2D Euler, symmetric $\fnt{Q}$). In all cases, the results are identical up to machine precision. 
}

%\bnote{TODO: FINISH}

\subsection{Comparisons of computational cost}

We first compare the cost of computing the Jacobian matrix using the formulas in this paper to other approaches. All computations are performed on a 2019 Macbook Pro with a 2.3 GHz 8-Core Intel Core i9 processor using Julia version 1.4 and all timings are computed using the \verb+BenchmarkTools.jl+ package \cite{BenchmarkTools.jl-2016}. 

\bnote{The cost of forward-mode automatic differentiation is known to be minimal for functions with low-dimensional inputs and outputs \cite{griewank2008evaluating}. To give a sense for the efficiency of AD in Julia, we compare the cost of evaluating a flux function $f_S(u_L,u_R)$ to the cost of computing its derivative using} \verb+ForwardDiff.jl+ \bnote{for $10000$ random values of $u_L,u_R$. The evaluation of the entropy conservative Burgers' flux takes 7.087 microseconds, while the derivative takes 7.063 microseconds to evaluate. The logarithmic mean takes 129.254 microseconds to evaluate, while its derivative takes 161.322 microseconds to evaluate.} The cost of computing Jacobians using \verb+ForwardDiff.jl+ scales similarly. 

Next, we compare the cost of computing both the full Jacobian and a Jacobian-vector product using the formulas in Theorem~\ref{thm:explicitJ} and competing approaches. Let $f_S(u_L,u_R)$ denote the scalar flux Burgers' flux, and define 
\eqlab{
\fnt{r}(\fnt{u}) = \LRp{\fnt{Q}\circ\fnt{F}}\fnt{1}, \qquad \fnt{F}_{ij} = f_S(\fnt{u}_i,\fnt{u}_j),
\label{eq:f_comp}
}
where $\fnt{Q} \in \mathbb{R}^{N,N}$ is a dense randomly generated skew-symmetric matrix. We compute the Jacobian matrix using the formula from Theorem~\ref{thm:explicitJ} (referred to as ``Formula from Theorem~\ref{thm:explicitJ}'' in Table~\ref{tab:timings}), with $\pd{f_S}{u_R}$ computed using both the analytical formula and automatic differentiation, which are tagged as ``(analytic)'' and ``(AD)'' in Table~\ref{tab:timings}. We also compute the full Jacobian matrix directly using \verb+ForwardDiff.jl+ (referred to as ``Automatic differentiation'' in Table~\ref{tab:timings}). We also compute the Jacobian matrix using the \verb+FiniteDiff.jl+ toolkit within the \verb+DifferentialEquations.jl+ framework \cite{rackauckas2017differentialequations}, which computes the Jacobian matrix efficiently using cached in-place function evaluations and finite difference approximations (referred to as ``finite differences'' in Table~\ref{tab:timings}). Finally, we provide timings for evaluating $\fnt{r}(\fnt{u})$ for reference. Implementations of both $\fnt{r}(\fnt{u})$ and its Jacobian are optimized for performance in Julia.\footnote{In our implementations of the evaluation of $\fnt{r}(\fnt{u})$ and the Jacobian $\pd{\fnt{r}}{\fnt{u}}$ (computed using Theorem~\ref{thm:explicitJ}), we pre-allocate all output vectors and matrices for efficiency. For the implementation of $\pd{\fnt{r}}{\fnt{u}} $, we compute the sum $\LRp{\fnt{Q}\circ\fnt{F}}\fnt{1}$ by looping over rows of $\bm{Q}$ and accumulating contributions from $\fnt{Q}\circ\fnt{F}$ column-by-column. We access entries of $\fnt{Q}^T$ to take advantage of the column-major storage of matrices in Julia.} %We have included code to compute timings in the Supplementary Materials. 

%Finally, we compare the computation of the Jacobian-vector product using the formulas in Theorem~\ref{thm:explicitJ} and using a finite difference approximation \cite{knoll2004jacobian}
%\[
%\LRl{\pd{\fnt{r}}{\fnt{u}}}_{\fnt{u}_h}\fnt{v} \approx \frac{\fnt{f}(\fnt{u}_h+\epsilon \fnt{v}) - \fnt{f}(\fnt{u}_h)}{\epsilon}.
%\]

\begin{table}[!h]
\centering
%\subfloat[Full Jacobian]{
\begin{tabular}{cccc}
\toprule
& N = 10 & N = 25 & N = 50 \\
\midrule
Automatic differentiation & 3.160 & 26.386 & 166.689 \\
Finite differences  & 1.536 & 17.397 & 129.510 \\
Formula from Theorem~\ref{thm:explicitJ} (analytic) & .125 & .628 &  2.357 \\
Formula from Theorem~\ref{thm:explicitJ} (AD) & .128 & .628 & 2.530 \\
Evaluation of $\fnt{r}(\fnt{u})$ (for reference) & .129 & .623 & 2.517 \\
\bottomrule
\end{tabular}
%}\\
%\subfloat[Jacobian-vector product]{
%\begin{tabular}{cccc}
%\toprule
%& N = 10 & N = 25 & N = 50 \\
%\midrule
%Finite differences (full Jacobian) \\ 
%Explicit formula \\
%\bottomrule
%\end{tabular}}
\caption{Timings for the computation of $\fnt{r}(\fnt{u})$ in (\ref{eq:f_comp}) and various methods of computing the full Jacobian $\td{\fnt{r}}{\fnt{u}}$  using the scalar Burgers' flux $f_S(u_L,u_R) = (u_L^2 + u_Lu_R + u_R^2)/6$ (times in microseconds).}
\label{tab:timings}
\end{table}
We observe that the cost of evaluating the full Jacobian matrix using the formula of Theorem~\ref{thm:explicitJ} is 1-2 orders of magnitude less expensive than automatic differentiation or finite differences applied directly to the nonlinear term $\fnt{r}(\fnt{u})$. These results highlight the fact that \bnote{Theorem~\ref{thm:explicitJ} allows one to take advantage of the Hadamard product and symmetry/skew-symmetry, which is difficult to do when directly applying automatic differentiation}. 

Because the number of flux evaluations required to evaluate $\fnt{r}(\fnt{u})$ is comparable to the number of AD function evaluations required to evaluate the Jacobian matrix, the cost of evaluating the full Jacobian matrix is proportional to the cost of directly evaluating $\fnt{r}(\fnt{u})$. Here, the constant of proportionality is roughly equal to the ratio of the cost of evaluating the flux function derivative (or Jacobian) and the cost of directly evaluating the flux function. This ratio of this cost will vary depending on the specific flux and the implementation.  For the entropy conservative fluxes for the two-dimensional compressible Euler equations \cite{chandrashekar2013kinetic}, the cost of evaluating the flux Jacobian matrix is only $1.625$ times more expensive than directly evaluating the flux (both the Jacobian matrix and the flux were evaluated only for a single coordinate direction). 

Finally, we note that if the Jacobian matrix is not explicitly required, \bnote{Jacobian-vector products can be evaluated in a matrix-free fashion using either forward mode AD \cite{griewank2008evaluating} or finite difference approximations \cite{knoll2004jacobian} at much lower computational cost. The formulas in Theorem~\ref{thm:explicitJ} can still be applied in a matrix-free fashion, but it is unclear if there are computational advantages over AD for computing Jacobian-vector products.}

\section{Conclusion}

In this work, we derive efficient formulas for Jacobian matrices resulting from entropy conservative and entropy stable schemes based on flux differencing and summation-by-parts operators. These formulas are given in terms of summation-by-parts matrices and derivatives of flux functions, the latter of which can be computed efficiently using automatic differentiation. The computation of Jacobians using these formulas is significantly faster than directly computing Jacobian matrices using automatic differentiation, especially for dense operators.  
%Formulas for both finite difference summation-by-parts and modal DG formulations are derived and the resulting matrices are applied to both two-derivative explicit Runge Kutta methods and implicit time-stepping schemes based on high order DG formulations of the 1D Burgers', 1D shallow water, and 2D compressible Euler equations. 
Future work will investigate the application of such formulas towards preconditioners and sensitivity analysis.

\section{Acknowledgments}

The authors gratefully acknowledge support from the National Science Foundation under award DMS-CAREER-1943186.  Christina Taylor also acknowledges support from the Ken Kennedy Institute 2019-2020 BP Graduate Fellowship.

\bibliographystyle{unsrt}
\bibliography{refs}

\appendix

\section{\rnote{Higher-dimensional domains and curved elements}}

\rnote{
The generalization to higher dimensional domains and curved geometric mappings is straightforward, but notationally more complicated.  The construction of skew-symmetric SBP matrices on curved meshes follows from approaches detailed in \cite{carpenter2014entropy, crean2018entropy, chan2017discretely, chan2018discretely, chan2018efficient, chan2019skew, hicken2020entropy}, which are summarized here. 

Let $\bm{x}, \hat{\bm{x}} \in \mathbb{R}^d$ denote $d$-dimensional physical and reference coordinates, respectively. Let $\hat{\fnt{Q}}_j$ denote the reference SBP operator corresponding to differentiation with respect to $\hat{x}_j$ which satisfies the SBP property (\ref{eq:fvQ}). This operator can be constructed any number of ways: using the tensor product of 1D SBP operators \cite{carpenter2014entropy}, multi-dimensional SBP operators \cite{hicken2016multidimensional}, or hybridized SBP operators \cite{chan2017discretely, chan2019skew}.  Consider now a curved element $D^k$ which is the image of a reference element under some differentiable mapping such that for $\bm{x}\in D^k$, $\bm{x} = \bm{\Phi}(\hat{\bm{x}})$. Then, derivatives with respect to physical coordinates can be computed via the chain rule $\pd{u}{x_i} = \sum_{j=1}^d \pd{u}{\hat{x}_j}\pd{\hat{x}_j}{x_i}$. For geometric terms which satisfy a discrete geometric conservation law (GCL) \cite{thomas1979geometric, crean2018entropy, chan2018discretely}, we can further manipulate the chain rule to show that
\[
\pd{u}{x_i} = \frac{1}{2}\sum_{j=1}^d \LRp{\pd{u}{\hat{x}_j}\pd{\hat{x}_j}{x_i} + \pd{}{\hat{x}_j}\LRp{u\pd{\hat{x}_j}{x_i}}}.
\]
We will construct a physical SBP operator by mimicking this form of the chain rule. Let $J$ denote the determinant of the Jacobian of $\bm{\Phi}$. Define the scaled geometric terms $\bm{g}_{ij} = J \pd{\hat{x}_j}{x_i}$, and let $\fnt{g}_{ij}$ denote the vector containing values of $\bm{g}_{ij}$ evaluated at nodal points. Define the physical SBP operator $\fnt{Q}_i$ as
\[
\fnt{Q}_i = \frac{1}{2} \sum_{j=1}^d \LRp{\diag{\fnt{g}_{ij}}\hat{\fnt{Q}}_j + \hat{\fnt{Q}}_j \diag{\fnt{g}_{ij}}}. 
\]
Then, one can show (using relationships between geometric terms $\bm{g}_{ij}$ and reference/physical normals) that $\fnt{Q}_i$ satisfies a physical SBP property $\fnt{Q}_i + \fnt{Q}_i^T = \fnt{B}_i$, where $\fnt{B}_i$ is a diagonal matrix whose entries consist of values (at face nodes) of the $i$th component of the outward normal scaled by the surface Jacobian and surface quadrature weights \cite{crean2018entropy, chan2018discretely}. Given connectivity maps between face nodes on different elements, the physical SBP operators $\fnt{Q}_i$ and $\fnt{B}_i$ can then be used to construct global SBP operators analogous to (\ref{eq:sbpmat}) in two and three dimensions. 
%We can now construct a global SBP operator using physical SBP operators, which we illustrate using the simple example of a mesh with four triangles as shown in Figure~\ref{fig:mesh}. Let $\fnt{Q}_{i,k}$ denote the SBP operator with respect to the $i$th physical coordinate on the $k$th element. We will construct a global SBP operator $\fnt{Q}_{1, \Omega}$ corresponding to differentiation with respect to the $x$-coordinate. 
}

\section{Two-derivative time-stepping methods}
\label{sec:tdrk}
Consider a general system of ODEs
\[
\td{\fnt{u}}{t} = \fnt{f}(\fnt{u}).
\]
Two-derivative explicit time-stepping methods are constructed based on the assumption that second derivatives of $\fnt{u}$ in time are available \cite{chan2010explicit, christlieb2016explicit}.  The resulting schemes can achieve higher order accuracy with fewer stages and function evaluations compared to standard Runge-Kutta methods.  

Let $\fnt{g}(\fnt{u})$ denote the second derivative of $\fnt{u}$ in time
\[
\fnt{g}(\fnt{u}) = \frac{{\rm d}^2 \fnt{u}}{{\rm dt}^2} = \td{}{t} \fnt{f}(\fnt{u}) = \pd{\fnt{f}}{\fnt{u}}\td{\fnt{u}}{t} = \pd{\fnt{f}}{\fnt{u}}\fnt{f}(\fnt{u}),
\]
where we have used the chain rule in the final step.  The simplest two-derivative Runge-Kutta method is the one-stage second order scheme \cite{chan2010explicit}
\[
\fnt{u}^{k+1} = \fnt{u}^k + \Delta t \fnt{f}(\fnt{u}^k) + \frac{\Delta t^2}{2} \fnt{g}(\fnt{u}^k),
\]
where $\fnt{u}^k$ denotes the solution at the $k$th time-step.  We examine the one-stage, two-stage, and three-stage two-derivative Runge Kutta given in \cite{chan2010explicit}, which we refer to as TDRK-1, TDRK-2, TDRK-3.\footnote{Five different three-stage schemes are presented in \cite{chan2010explicit}. We use the scheme corresponding to free parameter $c_3 = 2/3$, which the authors report as the best performing three-stage scheme.} These schemes are second, fourth, and fifth order accurate, respectively. We also provide reference results using a low-storage $4$th order 5-stage Runge-Kutta method (RK-45). 

%For the Burgers' equation, the resulting source terms have the form
%\eq{
%f(x,t) &= k \cos(k t) \sin(\pi x) + \pi \sin^2(k t) \cos(\pi x) \sin(\pi x), \\
%\pd{f}{t} &= -k^2 \sin(k t) \sin(\pi x) + 2k \pi \sin(k t) \cos(\pi x) \sin(\pi x).
%}
%For the shallow water equation, the source terms 

\begin{figure}
\centering
\subfloat[Burgers' equation]{
\begin{tikzpicture}
\begin{loglogaxis}[
    width=.49\textwidth,
    xlabel={Time-step $dt$},
    ylabel={$L^2$ errors}, 
    ymin=1e-21, ymax=.1,
    legend pos=south east, legend cell align=left, legend style={font=\tiny},	
    xmajorgrids=true, ymajorgrids=true, grid style=dashed, 
    legend entries={TDRK-1,TDRK-2,TDRK-3,RK-45}
]
\pgfplotsset{
cycle list={{blue, mark=*}, {red, dashed, mark=square*},{magenta, dashdotted ,mark=triangle*},{black,dashed,mark=x}}
}
% N = 1
\addplot+[semithick, mark options={solid, fill=markercolor, scale=1.5}]
coordinates{(0.00223214,0.0136524)(0.00111607,0.00342134)(0.000558036,0.00085604)(0.000279018,0.000214076)(0.000139509,5.35e-05)};
\addplot+[semithick, mark options={solid, fill=markercolor, scale=1.5}]
coordinates{(0.00223214,2.15e-05)(0.00111607,1.35e-06)(0.000558036,8.43e-08)(0.000279018,5.27e-09)(0.000139509,3.29e-10)};
\addplot+[semithick, mark options={solid, fill=markercolor, scale=1.5}]
coordinates{(0.00223214,5.05e-08)(0.00111607,7.86e-10)(0.000558036,1.23e-11)(0.000279018,2.04e-13)(0.000139509,7.13e-15)};
\addplot+[semithick, mark options={solid, fill=markercolor, scale=1.5}]
coordinates{(0.00223214,2.33e-06)(0.00111607,1.46e-07)(0.000558036,9.14e-09)(0.000279,5.71e-10)(0.00014,3.57e-11)};
\node at (axis cs:2.4,2.8) {$N = 1$};
\end{loglogaxis}
\end{tikzpicture}
}
\subfloat[Shallow water equations]{
\begin{tikzpicture}
\begin{loglogaxis}[
    width=.49\textwidth,
    xlabel={Time-step $dt$},
    ylabel={$L^2$ errors}, 
    ymin=1e-21, ymax=.1,
    legend pos=south east, legend cell align=left, legend style={font=\tiny},	
    xmajorgrids=true, ymajorgrids=true, grid style=dashed, 
    legend entries={TDRK-1,TDRK-2,TDRK-3,RK-45}
]
\pgfplotsset{
cycle list={{blue, mark=*}, {red, dashed, mark=square*},{magenta, dashdotted ,mark=triangle*},{black,dashed,mark=x}}
}
% N = 1
\addplot+[semithick, mark options={solid, fill=markercolor, scale=1.5}]
coordinates{(0.000447227,0.000487391)(0.000223614,7.93e-05)(0.000111807,1.98e-05)(5.59e-05,4.94e-06)(2.8e-05,1.23e-06)};
\addplot+[semithick, mark options={solid, fill=markercolor, scale=1.5}]
coordinates{(0.000447227,1.16e-08)(0.000223614,7.23e-10)(0.000111807,4.52e-11)(5.59e-05,2.84e-12)(2.8e-05,3.04e-13)};
\addplot+[semithick, mark options={solid, fill=markercolor, scale=1.5}]
coordinates{(0.000447227,1.52e-11)(0.000223614,4.76e-13)(0.000111807,3.52e-14)(5.59e-05,1.61e-13)};
\addplot+[semithick, mark options={solid, fill=markercolor, scale=1.5}]
coordinates{(0.000447227,1.34e-09)(0.000223614,8.39e-11)(0.000111807,5.27e-12)(5.59e-05,4.67e-13)(2.8e-05,6.19e-13)};
\node at (axis cs:2.4,2.8) {$N = 1$};
\end{loglogaxis}
\end{tikzpicture}
}
\caption{$L^2$ errors for manufactured solutions of the Burgers and shallow water equations for three TDRK schemes under various time-step sizes. Errors for RK-45 are also included for reference.}
\label{fig:tdrk_err}
\end{figure}
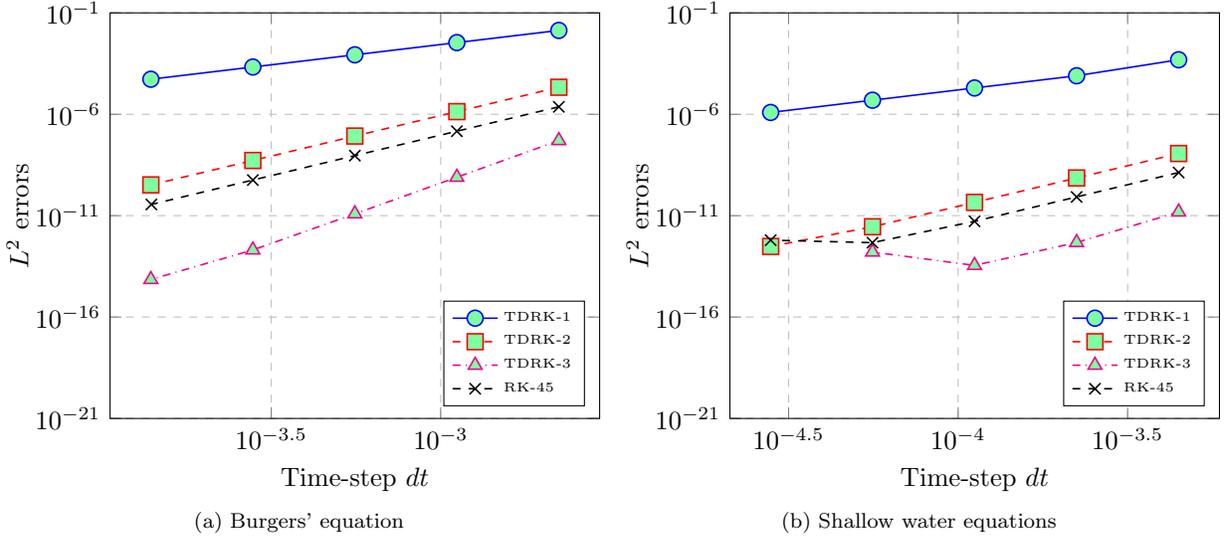
We examine the performance of two-derivative time-stepping methods for the one-dimensional Burgers' and shallow water equations using an entropy conservative and entropy stable spectral (Lobatto) collocation method of degree $N=40$ on a single periodic domain $[-1,1]$. For the entropy stable scheme, we apply a local Lax-Friedrichs penalty at the boundaries to produce entropy dissipation. We compute $L^2$ errors for a manufactured solution where all solution components have the form $\sin(k t) \sin(\pi x)$ with $k = 100$. \bnote{TDRK methods also require derivatives of source terms $f(x,t)$ associated with manufactured solutions, which we compute analytically. }

Figure~\ref{fig:tdrk_err} plots $L^2$ errors (computed using a higher accuracy Gaussian quadrature rule at final time $T=5$) against the time-step size, while Table~\ref{tab:tdrk} shows computed rates of convergence for each TDRK scheme. We observe that all except one TDRK scheme achieves the expected rate of convergence up until the point at which errors are affected by numerical roundoff. The outlier is the TDRK-3 scheme, which converges at the expected rate of $O(dt^5)$ for the shallow water equations, but achieves a higher $O(dt^6)$ rate of convergence for the Burgers' equation. 
\begin{table}
\centering
\subfloat[Burgers equation]{
\begin{tabular}{c|cccc}
$dt/dt_0$ & 1/2 & 1/4 & 1/8 & 1/16\\
\hline
TDRK-1 & 1.997 & 1.999& 2.000 & 2.000\\
TDRK-2 & 3.999&  4.000& 4.000 & 4.000 \\
TDRK-3 & 6.006 & {5.993} & \it{5.916} & \it{4.842}
\end{tabular}}
\qquad 
\subfloat[Shallow water equation]{
\begin{tabular}{c|cccc}
$dt/dt_0$ & 1/2 & 1/4 & 1/8 & 1/16\\
\hline
TDRK-1 & 2.620& 2.003& 2.001 & 2.001\\
TDRK-2 & 4.002 & 4.001&4.001 &4.000\\
TDRK-3 & 4.998 & \it{3.757}& \it{-2.193} &
\end{tabular}}
\caption{Computed rates of convergence with respect to $dt$ for different TDRK schemes ($dt_0$ denotes the initial time-step). Italicized numbers denote rates which are likely affected by numerical roundoff.}
\label{tab:tdrk}
\end{table}
We also observe that the 4th order RK-45 scheme is slightly more accurate than the 4th order TDRK-2 scheme. As noted in \cite{chan2010explicit}, the 2-stage TDRK-2 scheme requires only one evaluation of $\fnt{f}(\fnt{u})$ and two evaluations of $\fnt{g}(\fnt{u})$. However, when $\fnt{g}(\fnt{u})$ is computed using a Jacobian-vector product, this corresponds to two evaluations of $\fnt{f}(\fnt{u})$ and two Jacobian-vector products. Because evaluating Jacobian-vector products are at least as expensive as evaluating $\fnt{f}(\fnt{u})$, it is not clear that the TDRK-2 scheme would be more efficient than either RK-45 or the standard 4-stage 4th order Runge-Kutta method in practice. 

\begin{figure}
\centering
\subfloat[Burgers' equation]{
\begin{tikzpicture}
\begin{axis}[
    width=.49\textwidth,
    xlabel={Time },
    ylabel={$\mathsf{S}(t)-\mathsf{S}(0)$}, 
    ymin=-.015,
    legend pos=south west, legend cell align=left, legend style={font=\tiny},	
    xmajorgrids=true, ymajorgrids=true, grid style=dashed, 
    legend entries={TDRK-2 (EC),RK-45 (EC),TDRK-2 (ES),RK-45 (ES)}
    ]
\pgfplotsset{
cycle list={{blue}, {red, dashed},{magenta, dashdotted},{black,dashed}}
}
\addplot+[semithick, mark options={solid, fill=markercolor, scale=1.5},line width=1.5pt]
coordinates{(0,0)(0.0223614,0)(0.0447227,0)(0.0670841,0)(0.0894454,0)(0.111807,0)(0.134168,0)(0.15653,0)(0.178891,0)(0.201252,0)(0.223614,0)(0.245975,0)(0.268336,0)(0.290698,0)(0.313059,0)(0.33542,0)(0.357782,0)(0.380143,0)(0.402504,0)(0.424866,0)(0.447227,0)(0.469589,0)(0.49195,0)(0.514311,0)(0.536673,0)(0.559034,0)(0.581395,0)(0.603757,0)(0.626118,0)(0.648479,0)(0.670841,0)(0.693202,0)(0.715564,0)(0.737925,0)(0.760286,0)(0.782648,0)(0.805009,0)(0.82737,0)(0.849732,0)(0.872093,0)(0.894454,0)(0.916816,0)(0.939177,0)(0.961538,0)(0.9839,0)(1.00626,0)(1.02862,0)(1.05098,0)(1.07335,0)(1.09571,0)(1.11807,0)(1.14043,0)(1.16279,0)(1.18515,0)(1.20751,0)(1.22987,0)(1.25224,0)(1.2746,0)(1.29696,0)(1.31932,0)(1.34168,0)(1.36404,0)(1.3864,0)(1.40877,0)(1.43113,0)(1.45349,0)(1.47585,0)(1.49821,0)(1.52057,0)(1.54293,-1e-11)(1.5653,-1e-11)(1.58766,-2e-11)(1.61002,-3e-11)(1.63238,-3e-11)(1.65474,-3e-11)(1.6771,-3e-11)(1.69946,-3e-11)(1.72182,-3e-11)(1.74419,-4e-11)(1.76655,-7e-11)(1.78891,-1.3e-10)(1.81127,-2.4e-10)(1.83363,-3.5e-10)(1.85599,-4e-10)(1.87835,-3.3e-10)(1.90072,-3.8e-10)(1.92308,-4.4e-10)(1.94544,-4.3e-10)(1.9678,-4.6e-10)(1.99016,-5.2e-10)(2.01252,-4.4e-10)(2.03488,-2.6e-10)(2.05725,-1.4e-10)(2.07961,-1.1e-10)(2.10197,-1e-10)(2.12433,-8e-11)(2.14669,-8e-11)(2.16905,-9e-11)(2.19141,-7e-11)(2.21377,-1.04e-09)(2.23614,-1.44e-09)(2.2585,-1.21e-09)(2.28086,-1.21e-09)(2.30322,-1.48e-09)(2.32558,-1.55e-09)(2.34794,-2.16e-09)(2.3703,-2.16e-09)(2.39267,-9.9e-10)(2.41503,-2.8e-10)(2.43739,-1.5e-10)(2.45975,-1.2e-10)(2.48211,-1.1e-10)(2.50447,-1.1e-10)(2.52683,-1e-10)(2.54919,-8e-11)(2.57156,-8e-11)(2.59392,-8e-11)(2.61628,-8e-11)(2.63864,-8e-11)(2.661,-4.5e-10)(2.68336,-8.9e-10)(2.70572,-9.5e-10)(2.72809,-7.5e-10)(2.75045,-7.6e-10)(2.77281,-8.3e-10)(2.79517,-1.03e-09)(2.81753,-2.52e-09)(2.83989,-3.62e-09)(2.86225,-2.6e-09)(2.88462,-1.53e-09)(2.90698,-1.26e-09)(2.92934,-8.8e-10)(2.9517,-7.4e-10)(2.97406,-6.8e-10)(2.99642,-5.7e-10)(3.01878,-4.7e-10)(3.04114,-4.2e-10)(3.06351,-3.9e-10)(3.08587,-3.7e-10)(3.10823,-3.5e-10)(3.13059,-3.5e-10)(3.15295,-3.4e-10)(3.17531,-3.4e-10)(3.19767,-3.4e-10)(3.22004,-3.4e-10)(3.2424,-3.4e-10)(3.26476,-3.5e-10)(3.28712,-3.7e-10)(3.30948,-3.4e-10)(3.33184,-2.9e-10)(3.3542,-6.9e-10)(3.37657,-5.25e-09)(3.39893,-7.56e-09)(3.42129,-5.57e-09)(3.44365,-4.49e-09)(3.46601,-4.39e-09)(3.48837,-4.51e-09)(3.51073,-7.64e-09)(3.53309,-8.11e-09)(3.55546,-1.94e-09)(3.57782,-1.926e-08)(3.60018,-5.8e-10)(3.62254,4.3e-10)(3.6449,4.7e-10)(3.66726,5.6e-10)(3.68962,-2.1e-10)(3.71199,9.8e-10)(3.73435,1.41e-09)(3.75671,1.33e-09)(3.77907,1.3e-09)(3.80143,1.06e-09)(3.82379,1.2e-09)(3.84615,1.29e-09)(3.86852,1.34e-09)(3.89088,1.38e-09)(3.91324,1.33e-09)(3.9356,1.24e-09)(3.95796,1.49e-09)(3.98032,1.6e-09)(4.00268,1.57e-09)(4.02504,1.56e-09)(4.04741,1.53e-09)(4.06977,1.46e-09)(4.09213,1.42e-09)(4.11449,1.39e-09)(4.13685,1.4e-09)(4.15921,1.46e-09)(4.18157,1.47e-09)(4.20394,1.46e-09)(4.2263,1.48e-09)(4.24866,1.47e-09)(4.27102,1.39e-09)(4.29338,1.28e-09)(4.31574,1.25e-09)(4.3381,1.33e-09)(4.36047,1.4e-09)(4.38283,1.41e-09)(4.40519,1.43e-09)(4.42755,1.46e-09)(4.44991,1.45e-09)(4.47227,1.39e-09)(4.49463,1.37e-09)(4.51699,1.45e-09)(4.53936,1.53e-09)(4.56172,1.56e-09)(4.58408,1.57e-09)(4.60644,1.58e-09)(4.6288,1.59e-09)(4.65116,1.59e-09)(4.67352,1.58e-09)(4.69589,1.37e-09)(4.71825,1.45e-09)(4.74061,1.53e-09)(4.76297,1.55e-09)(4.78533,1.56e-09)(4.80769,1.56e-09)(4.83005,1.58e-09)(4.85242,1.62e-09)(4.87478,1.63e-09)(4.89714,1.63e-09)(4.9195,1.63e-09)(4.94186,1.63e-09)(4.96422,1.63e-09)(4.98658,1.62e-09)};
\addplot+[semithick, mark options={solid, fill=markercolor, scale=1.5},line width=1.5pt]
coordinates{(0,0)(0.0223614,0)(0.0447227,0)(0.0670841,0)(0.0894454,0)(0.111807,0)(0.134168,0)(0.15653,0)(0.178891,0)(0.201252,0)(0.223614,0)(0.245975,0)(0.268336,0)(0.290698,0)(0.313059,0)(0.33542,0)(0.357782,0)(0.380143,0)(0.402504,0)(0.424866,0)(0.447227,0)(0.469589,0)(0.49195,0)(0.514311,0)(0.536673,0)(0.559034,0)(0.581395,0)(0.603757,0)(0.626118,0)(0.648479,0)(0.670841,0)(0.693202,0)(0.715564,0)(0.737925,0)(0.760286,0)(0.782648,0)(0.805009,0)(0.82737,0)(0.849732,0)(0.872093,0)(0.894454,0)(0.916816,0)(0.939177,0)(0.961538,0)(0.9839,0)(1.00626,0)(1.02862,0)(1.05098,0)(1.07335,0)(1.09571,0)(1.11807,0)(1.14043,0)(1.16279,0)(1.18515,0)(1.20751,0)(1.22987,0)(1.25224,0)(1.2746,0)(1.29696,0)(1.31932,0)(1.34168,0)(1.36404,0)(1.3864,0)(1.40877,0)(1.43113,0)(1.45349,0)(1.47585,0)(1.49821,0)(1.52057,0)(1.54293,0)(1.5653,0)(1.58766,-1e-11)(1.61002,-1e-11)(1.63238,-1e-11)(1.65474,-1e-11)(1.6771,-1e-11)(1.69946,-1e-11)(1.72182,-1e-11)(1.74419,-1e-11)(1.76655,-2e-11)(1.78891,-3e-11)(1.81127,-6e-11)(1.83363,-9e-11)(1.85599,-1e-10)(1.87835,-1e-10)(1.90072,-1e-10)(1.92308,-1.1e-10)(1.94544,-1.2e-10)(1.9678,-1.2e-10)(1.99016,-1.3e-10)(2.01252,-1.2e-10)(2.03488,-8e-11)(2.05725,-5e-11)(2.07961,-4e-11)(2.10197,-3e-11)(2.12433,-3e-11)(2.14669,-3e-11)(2.16905,-4e-11)(2.19141,-4e-11)(2.21377,-2.4e-10)(2.23614,-2.8e-10)(2.2585,-2.6e-10)(2.28086,-2.7e-10)(2.30322,-3.1e-10)(2.32558,-3.6e-10)(2.34794,-5.1e-10)(2.3703,-5e-10)(2.39267,-2.5e-10)(2.41503,-9e-11)(2.43739,-5e-11)(2.45975,-4e-11)(2.48211,-3e-11)(2.50447,-3e-11)(2.52683,-3e-11)(2.54919,-3e-11)(2.57156,-3e-11)(2.59392,-3e-11)(2.61628,-3e-11)(2.63864,-4e-11)(2.661,-1.3e-10)(2.68336,-2.2e-10)(2.70572,-2.4e-10)(2.72809,-2e-10)(2.75045,-2e-10)(2.77281,-2.2e-10)(2.79517,-3e-10)(2.81753,-6.8e-10)(2.83989,-9.3e-10)(2.86225,-7.4e-10)(2.88462,-4.9e-10)(2.90698,-3.9e-10)(2.92934,-3e-10)(2.9517,-2.7e-10)(2.97406,-2.4e-10)(2.99642,-2.2e-10)(3.01878,-1.9e-10)(3.04114,-1.8e-10)(3.06351,-1.7e-10)(3.08587,-1.7e-10)(3.10823,-1.6e-10)(3.13059,-1.6e-10)(3.15295,-1.6e-10)(3.17531,-1.6e-10)(3.19767,-1.6e-10)(3.22004,-1.6e-10)(3.2424,-1.6e-10)(3.26476,-1.6e-10)(3.28712,-1.7e-10)(3.30948,-1.7e-10)(3.33184,-1.8e-10)(3.3542,-3.3e-10)(3.37657,-1.4e-09)(3.39893,-2.03e-09)(3.42129,-1.68e-09)(3.44365,-1.36e-09)(3.46601,-1.32e-09)(3.48837,-1.43e-09)(3.51073,-2.17e-09)(3.53309,-2.29e-09)(3.55546,-1.37e-09)(3.57782,-5.52e-09)(3.60018,-1.88e-09)(3.62254,-1.29e-09)(3.6449,-1.26e-09)(3.66726,-1.27e-09)(3.68962,-1.44e-09)(3.71199,-1.19e-09)(3.73435,-1.13e-09)(3.75671,-1.15e-09)(3.77907,-1.15e-09)(3.80143,-1.19e-09)(3.82379,-1.15e-09)(3.84615,-1.14e-09)(3.86852,-1.13e-09)(3.89088,-1.12e-09)(3.91324,-1.14e-09)(3.9356,-1.15e-09)(3.95796,-1.1e-09)(3.98032,-1.09e-09)(4.00268,-1.1e-09)(4.02504,-1.1e-09)(4.04741,-1.11e-09)(4.06977,-1.12e-09)(4.09213,-1.13e-09)(4.11449,-1.14e-09)(4.13685,-1.14e-09)(4.15921,-1.13e-09)(4.18157,-1.12e-09)(4.20394,-1.12e-09)(4.2263,-1.12e-09)(4.24866,-1.13e-09)(4.27102,-1.15e-09)(4.29338,-1.17e-09)(4.31574,-1.18e-09)(4.3381,-1.16e-09)(4.36047,-1.15e-09)(4.38283,-1.14e-09)(4.40519,-1.13e-09)(4.42755,-1.13e-09)(4.44991,-1.13e-09)(4.47227,-1.14e-09)(4.49463,-1.15e-09)(4.51699,-1.13e-09)(4.53936,-1.12e-09)(4.56172,-1.11e-09)(4.58408,-1.1e-09)(4.60644,-1.1e-09)(4.6288,-1.1e-09)(4.65116,-1.1e-09)(4.67352,-1.11e-09)(4.69589,-1.15e-09)(4.71825,-1.14e-09)(4.74061,-1.12e-09)(4.76297,-1.11e-09)(4.78533,-1.11e-09)(4.80769,-1.11e-09)(4.83005,-1.1e-09)(4.85242,-1.09e-09)(4.87478,-1.09e-09)(4.89714,-1.09e-09)(4.9195,-1.09e-09)(4.94186,-1.09e-09)(4.96422,-1.1e-09)(4.98658,-1.1e-09)};
\addplot+[semithick, mark options={solid, fill=markercolor, scale=1.5},line width=1.5pt]
coordinates{(0,0)(0.0223614,0)(0.0447227,0)(0.0670841,0)(0.0894454,0)(0.111807,0)(0.134168,0)(0.15653,0)(0.178891,0)(0.201252,0)(0.223614,0)(0.245975,0)(0.268336,0)(0.290698,0)(0.313059,0)(0.33542,0)(0.357782,0)(0.380143,0)(0.402504,0)(0.424866,0)(0.447227,0)(0.469589,0)(0.49195,0)(0.514311,0)(0.536673,0)(0.559034,0)(0.581395,0)(0.603757,0)(0.626118,0)(0.648479,0)(0.670841,0)(0.693202,0)(0.715564,-1e-11)(0.737925,-2e-11)(0.760286,-6e-11)(0.782648,-1.4e-10)(0.805009,-3.3e-10)(0.82737,-7.6e-10)(0.849732,-1.67e-09)(0.872093,-3.49e-09)(0.894454,-7e-09)(0.916816,-1.341e-08)(0.939177,-2.454e-08)(0.961538,-4.293e-08)(0.9839,-7.193e-08)(1.00626,-1.1573e-07)(1.02862,-1.7935e-07)(1.05098,-2.6866e-07)(1.07335,-3.9052e-07)(1.09571,-5.5295e-07)(1.11807,-7.6558e-07)(1.14043,-1.04021e-06)(1.16279,-1.39182e-06)(1.18515,-1.83997e-06)(1.20751,-2.41102e-06)(1.22987,-3.1416e-06)(1.25224,-4.08397e-06)(1.2746,-5.31466e-06)(1.29696,-6.9483e-06)(1.31932,-9.15987e-06)(1.34168,-1.22207e-05)(1.36404,-1.65553e-05)(1.3864,-2.28293e-05)(1.40877,-3.20739e-05)(1.43113,-4.58364e-05)(1.45349,-6.62888e-05)(1.47585,-9.61003e-05)(1.49821,-0.000137706)(1.52057,-0.000191615)(1.54293,-0.000254175)(1.5653,-0.000316883)(1.58766,-0.000369751)(1.61002,-0.000407211)(1.63238,-0.000430734)(1.65474,-0.000445867)(1.6771,-0.000458355)(1.69946,-0.000473298)(1.72182,-0.000496606)(1.74419,-0.000537611)(1.76655,-0.000612934)(1.78891,-0.000747119)(1.81127,-0.000954191)(1.83363,-0.00119761)(1.85599,-0.00140056)(1.87835,-0.00153394)(1.90072,-0.0016329)(1.92308,-0.0017422)(1.94544,-0.00189255)(1.9678,-0.00208925)(1.99016,-0.00229255)(2.01252,-0.00243918)(2.03488,-0.00251096)(2.05725,-0.00253841)(2.07961,-0.00255108)(2.10197,-0.0025626)(2.12433,-0.00257974)(2.14669,-0.0026085)(2.16905,-0.00265862)(2.19141,-0.00274844)(2.21377,-0.00290529)(2.23614,-0.00314736)(2.2585,-0.00344181)(2.28086,-0.00370678)(2.30322,-0.00390306)(2.32558,-0.00406854)(2.34794,-0.00426148)(2.3703,-0.00451741)(2.39267,-0.00480984)(2.41503,-0.00504408)(2.43739,-0.00516249)(2.45975,-0.00520041)(2.48211,-0.00520861)(2.50447,-0.00520987)(2.52683,-0.00521001)(2.54919,-0.00521009)(2.57156,-0.00521066)(2.59392,-0.00521384)(2.61628,-0.00522256)(2.63864,-0.0052399)(2.661,-0.00527253)(2.68336,-0.00533586)(2.70572,-0.00545769)(2.72809,-0.00566468)(2.75045,-0.00593171)(2.77281,-0.00615877)(2.79517,-0.00628063)(2.81753,-0.00633455)(2.83989,-0.00637725)(2.86225,-0.00644893)(2.88462,-0.00659191)(2.90698,-0.00685945)(2.92934,-0.00724497)(2.9517,-0.00758817)(2.97406,-0.00776219)(2.99642,-0.00782911)(3.01878,-0.00787119)(3.04114,-0.00791976)(3.06351,-0.00797755)(3.08587,-0.00803439)(3.10823,-0.00807721)(3.13059,-0.00810023)(3.15295,-0.00810841)(3.17531,-0.00811013)(3.19767,-0.00811034)(3.22004,-0.00811039)(3.2424,-0.00811066)(3.26476,-0.00811189)(3.28712,-0.00811441)(3.30948,-0.00811775)(3.33184,-0.00812163)(3.3542,-0.00812617)(3.37657,-0.00813198)(3.39893,-0.00814022)(3.42129,-0.0081529)(3.44365,-0.00817348)(3.46601,-0.00820797)(3.48837,-0.00826693)(3.51073,-0.00836755)(3.53309,-0.00853047)(3.55546,-0.00875971)(3.57782,-0.00901146)(3.60018,-0.00921461)(3.62254,-0.00934942)(3.6449,-0.00945643)(3.66726,-0.00959007)(3.68962,-0.00980368)(3.71199,-0.0101254)(3.73435,-0.0104817)(3.75671,-0.010728)(3.77907,-0.0108359)(3.80143,-0.0108738)(3.82379,-0.0108877)(3.84615,-0.0108919)(3.86852,-0.0108922)(3.89088,-0.0108963)(3.91324,-0.0109139)(3.9356,-0.010934)(3.95796,-0.0109465)(3.98032,-0.0109501)(4.00268,-0.0109502)(4.02504,-0.0109554)(4.04741,-0.0109665)(4.06977,-0.010976)(4.09213,-0.0109828)(4.11449,-0.0109876)(4.13685,-0.010991)(4.15921,-0.0109937)(4.18157,-0.0109958)(4.20394,-0.0109976)(4.2263,-0.0109993)(4.24866,-0.0110011)(4.27102,-0.0110033)(4.29338,-0.0110067)(4.31574,-0.0110119)(4.3381,-0.0110202)(4.36047,-0.0110331)(4.38283,-0.011053)(4.40519,-0.011084)(4.42755,-0.0111324)(4.44991,-0.0112065)(4.47227,-0.0113118)(4.49463,-0.0114412)(4.51699,-0.0115681)(4.53936,-0.0116607)(4.56172,-0.0117102)(4.58408,-0.011732)(4.60644,-0.0117437)(4.6288,-0.0117565)(4.65116,-0.0117789)(4.67352,-0.0118212)(4.69589,-0.0119004)(4.71825,-0.0120395)(4.74061,-0.0122437)(4.76297,-0.0124631)(4.78533,-0.0126217)(4.80769,-0.0127024)(4.83005,-0.0127415)(4.85242,-0.0127711)(4.87478,-0.012806)(4.89714,-0.0128505)(4.9195,-0.0129012)(4.94186,-0.0129493)(4.96422,-0.0129844)(4.98658,-0.0130019)};
\addplot+[semithick, mark options={solid, fill=markercolor, scale=1.5},line width=1.5pt]
coordinates{(0,0)(0.0223614,0)(0.0447227,0)(0.0670841,0)(0.0894454,0)(0.111807,0)(0.134168,0)(0.15653,0)(0.178891,0)(0.201252,0)(0.223614,0)(0.245975,0)(0.268336,0)(0.290698,0)(0.313059,0)(0.33542,0)(0.357782,0)(0.380143,0)(0.402504,0)(0.424866,0)(0.447227,0)(0.469589,0)(0.49195,0)(0.514311,0)(0.536673,0)(0.559034,0)(0.581395,0)(0.603757,0)(0.626118,0)(0.648479,0)(0.670841,0)(0.693202,0)(0.715564,-1e-11)(0.737925,-2e-11)(0.760286,-6e-11)(0.782648,-1.4e-10)(0.805009,-3.3e-10)(0.82737,-7.6e-10)(0.849732,-1.67e-09)(0.872093,-3.49e-09)(0.894454,-7e-09)(0.916816,-1.341e-08)(0.939177,-2.454e-08)(0.961538,-4.293e-08)(0.9839,-7.193e-08)(1.00626,-1.1573e-07)(1.02862,-1.7935e-07)(1.05098,-2.6866e-07)(1.07335,-3.9052e-07)(1.09571,-5.5295e-07)(1.11807,-7.6558e-07)(1.14043,-1.04021e-06)(1.16279,-1.39182e-06)(1.18515,-1.83997e-06)(1.20751,-2.41102e-06)(1.22987,-3.1416e-06)(1.25224,-4.08397e-06)(1.2746,-5.31466e-06)(1.29696,-6.9483e-06)(1.31932,-9.15987e-06)(1.34168,-1.22207e-05)(1.36404,-1.65553e-05)(1.3864,-2.28293e-05)(1.40877,-3.20739e-05)(1.43113,-4.58364e-05)(1.45349,-6.62888e-05)(1.47585,-9.61003e-05)(1.49821,-0.000137706)(1.52057,-0.000191615)(1.54293,-0.000254175)(1.5653,-0.000316883)(1.58766,-0.000369751)(1.61002,-0.000407211)(1.63238,-0.000430734)(1.65474,-0.000445867)(1.6771,-0.000458355)(1.69946,-0.000473298)(1.72182,-0.000496606)(1.74419,-0.000537611)(1.76655,-0.000612934)(1.78891,-0.000747119)(1.81127,-0.000954191)(1.83363,-0.00119761)(1.85599,-0.00140056)(1.87835,-0.00153394)(1.90072,-0.0016329)(1.92308,-0.0017422)(1.94544,-0.00189255)(1.9678,-0.00208925)(1.99016,-0.00229255)(2.01252,-0.00243918)(2.03488,-0.00251096)(2.05725,-0.00253841)(2.07961,-0.00255108)(2.10197,-0.0025626)(2.12433,-0.00257974)(2.14669,-0.0026085)(2.16905,-0.00265862)(2.19141,-0.00274844)(2.21377,-0.00290529)(2.23614,-0.00314736)(2.2585,-0.00344181)(2.28086,-0.00370677)(2.30322,-0.00390306)(2.32558,-0.00406854)(2.34794,-0.00426148)(2.3703,-0.00451741)(2.39267,-0.00480984)(2.41503,-0.00504408)(2.43739,-0.00516249)(2.45975,-0.00520041)(2.48211,-0.00520861)(2.50447,-0.00520987)(2.52683,-0.00521001)(2.54919,-0.00521009)(2.57156,-0.00521066)(2.59392,-0.00521384)(2.61628,-0.00522256)(2.63864,-0.0052399)(2.661,-0.00527253)(2.68336,-0.00533586)(2.70572,-0.00545769)(2.72809,-0.00566468)(2.75045,-0.00593171)(2.77281,-0.00615877)(2.79517,-0.00628063)(2.81753,-0.00633455)(2.83989,-0.00637725)(2.86225,-0.00644893)(2.88462,-0.00659191)(2.90698,-0.00685945)(2.92934,-0.00724497)(2.9517,-0.00758817)(2.97406,-0.00776219)(2.99642,-0.00782911)(3.01878,-0.00787119)(3.04114,-0.00791976)(3.06351,-0.00797755)(3.08587,-0.00803439)(3.10823,-0.00807721)(3.13059,-0.00810023)(3.15295,-0.00810841)(3.17531,-0.00811013)(3.19767,-0.00811034)(3.22004,-0.00811039)(3.2424,-0.00811066)(3.26476,-0.00811189)(3.28712,-0.00811441)(3.30948,-0.00811775)(3.33184,-0.00812163)(3.3542,-0.00812617)(3.37657,-0.00813198)(3.39893,-0.00814022)(3.42129,-0.0081529)(3.44365,-0.00817348)(3.46601,-0.00820797)(3.48837,-0.00826693)(3.51073,-0.00836755)(3.53309,-0.00853047)(3.55546,-0.00875971)(3.57782,-0.00901146)(3.60018,-0.00921461)(3.62254,-0.00934942)(3.6449,-0.00945643)(3.66726,-0.00959007)(3.68962,-0.00980368)(3.71199,-0.0101254)(3.73435,-0.0104817)(3.75671,-0.010728)(3.77907,-0.0108359)(3.80143,-0.0108738)(3.82379,-0.0108877)(3.84615,-0.0108919)(3.86852,-0.0108922)(3.89088,-0.0108963)(3.91324,-0.0109139)(3.9356,-0.010934)(3.95796,-0.0109465)(3.98032,-0.0109501)(4.00268,-0.0109502)(4.02504,-0.0109554)(4.04741,-0.0109665)(4.06977,-0.010976)(4.09213,-0.0109828)(4.11449,-0.0109876)(4.13685,-0.010991)(4.15921,-0.0109937)(4.18157,-0.0109958)(4.20394,-0.0109976)(4.2263,-0.0109993)(4.24866,-0.0110011)(4.27102,-0.0110033)(4.29338,-0.0110067)(4.31574,-0.0110119)(4.3381,-0.0110202)(4.36047,-0.0110331)(4.38283,-0.011053)(4.40519,-0.011084)(4.42755,-0.0111324)(4.44991,-0.0112065)(4.47227,-0.0113118)(4.49463,-0.0114412)(4.51699,-0.0115681)(4.53936,-0.0116607)(4.56172,-0.0117102)(4.58408,-0.011732)(4.60644,-0.0117437)(4.6288,-0.0117565)(4.65116,-0.0117789)(4.67352,-0.0118212)(4.69589,-0.0119004)(4.71825,-0.0120395)(4.74061,-0.0122437)(4.76297,-0.0124631)(4.78533,-0.0126217)(4.80769,-0.0127024)(4.83005,-0.0127415)(4.85242,-0.0127711)(4.87478,-0.012806)(4.89714,-0.0128505)(4.9195,-0.0129012)(4.94186,-0.0129493)(4.96422,-0.0129844)(4.98658,-0.0130019)};
\end{axis}
\end{tikzpicture}
}
\subfloat[Shallow water equations]{
\begin{tikzpicture}
\begin{axis}[
    width=.49\textwidth,
    xlabel={Time},
    ylabel={$\mathsf{S}(t)-\mathsf{S}(0)$}, 
    ymin= -.015,
    legend pos=south west, legend cell align=left, legend style={font=\tiny},	
    xmajorgrids=true, ymajorgrids=true, grid style=dashed, 
    legend entries={TDRK-2 (EC),RK-45 (EC),TDRK-2 (ES),RK-45 (ES)}
]
\pgfplotsset{
cycle list={{blue}, {red, dashed},{magenta, dashdotted},{black,dashed}}
}
\addplot+[semithick, mark options={solid, fill=markercolor, scale=1.5},line width=1.5pt]
coordinates{(0,0)(0.0223614,-8e-09)(0.0447227,2e-09)(0.0670841,-1.7e-08)(0.0894454,6e-09)(0.111807,-3.4e-08)(0.134168,2e-08)(0.15653,-5.2e-08)(0.178891,4e-08)(0.201252,-6.3e-08)(0.223614,6.8e-08)(0.245975,-5.7e-08)(0.268336,1.02e-07)(0.290698,-1.4e-08)(0.313059,1.16e-07)(0.33542,9e-08)(0.357782,8.7e-08)(0.380143,2.02e-07)(0.402504,1.21e-07)(0.424866,1.52e-07)(0.447227,2.11e-07)(0.469589,1.96e-07)(0.49195,1.89e-07)(0.514311,1.95e-07)(0.536673,1.95e-07)(0.559034,1.93e-07)(0.581395,1.88e-07)(0.603757,2.76e-07)(0.626118,7.53e-07)(0.648479,1.714e-06)(0.670841,2.46e-06)(0.693202,1.655e-06)(0.715564,1.546e-06)(0.737925,4.758e-06)(0.760286,2.172e-06)(0.782648,-1.826e-06)(0.805009,1.928e-06)(0.82737,-6.033e-06)(0.849732,-2.217e-06)(0.872093,-8.79e-06)(0.894454,-9.204e-06)(0.916816,-1.427e-05)(0.939177,-1.9142e-05)(0.961538,-2.2505e-05)(0.9839,-2.3567e-05)(1.00626,-2.9132e-05)(1.02862,-2.3406e-05)(1.05098,-3.2304e-05)(1.07335,-2.0934e-05)(1.09571,-3.515e-05)(1.11807,-2.1569e-05)(1.14043,-3.7229e-05)(1.16279,-2.3704e-05)(1.18515,-3.3152e-05)(1.20751,-2.8323e-05)(1.22987,-3.2935e-05)(1.25224,-5.5064e-05)(1.2746,-5.7733e-05)(1.29696,-8.6374e-05)(1.31932,-8.6695e-05)(1.34168,-0.000111086)(1.36404,-0.000119314)(1.3864,-0.000125441)(1.40877,-0.000144365)(1.43113,-0.000137519)(1.45349,-0.000159809)(1.47585,-0.000146474)(1.49821,-0.000164114)(1.52057,-0.000147002)(1.54293,-0.000158494)(1.5653,-0.000148021)(1.58766,-0.000158139)(1.61002,-0.000154195)(1.63238,-0.000156897)(1.65474,-0.00015816)(1.6771,-0.000156876)(1.69946,-0.000155602)(1.72182,-0.000151395)(1.74419,-0.000151001)(1.76655,-0.000149996)(1.78891,-0.000169143)(1.81127,-0.000174237)(1.83363,-0.000176236)(1.85599,-0.000177842)(1.87835,-0.000171036)(1.90072,-0.000171649)(1.92308,-0.000169155)(1.94544,-0.000178184)(1.9678,-0.00018441)(1.99016,-0.000160115)(2.01252,-0.000121795)(2.03488,-0.000158926)(2.05725,-0.000169263)(2.07961,-0.000185092)(2.10197,-0.000202044)(2.12433,-0.000195102)(2.14669,-0.000204682)(2.16905,-0.000213601)(2.19141,-0.000214263)(2.21377,-0.00025125)(2.23614,-0.000245288)(2.2585,-0.000253612)(2.28086,-0.000253093)(2.30322,-0.000242775)(2.32558,-0.000254005)(2.34794,-0.000230563)(2.3703,-0.000244484)(2.39267,-0.000247068)(2.41503,-0.000249152)(2.43739,-0.000239164)(2.45975,-0.000253941)(2.48211,-0.0002379)(2.50447,-0.000245776)(2.52683,-0.000236692)(2.54919,-0.000233701)(2.57156,-0.000243972)(2.59392,-0.000243405)(2.61628,-0.000274677)(2.63864,-0.000271022)(2.661,-0.000292022)(2.68336,-0.000293571)(2.70572,-0.000297108)(2.72809,-0.000327113)(2.75045,-0.000317273)(2.77281,-0.000324802)(2.79517,-0.000319957)(2.81753,-0.000326761)(2.83989,-0.000316469)(2.86225,-0.000305064)(2.88462,-0.000318659)(2.90698,-0.000324667)(2.92934,-0.000312555)(2.9517,-0.00029165)(2.97406,-0.000296045)(2.99642,-0.000288633)(3.01878,-0.0003017)(3.04114,-0.000276123)(3.06351,-0.000276935)(3.08587,-0.000276925)(3.10823,-0.000288634)(3.13059,-0.000278875)(3.15295,-0.000277779)(3.17531,-0.000285056)(3.19767,-0.000270422)(3.22004,-0.000252195)(3.2424,-0.000253577)(3.26476,-0.000243523)(3.28712,-0.000261526)(3.30948,-0.000283864)(3.33184,-0.000216907)(3.3542,-0.000265819)(3.37657,-0.00025638)(3.39893,-0.000256632)(3.42129,-0.000264224)(3.44365,-0.000268724)(3.46601,-0.000318632)(3.48837,-0.000292355)(3.51073,-0.000312855)(3.53309,-0.000342756)(3.55546,-0.000322336)(3.57782,-0.000305784)(3.60018,-0.000334086)(3.62254,-0.000361118)(3.6449,-0.000378564)(3.66726,-0.000361957)(3.68962,-0.00042614)(3.71199,-0.000462584)(3.73435,-0.000449318)(3.75671,-0.000427551)(3.77907,-0.000433221)(3.80143,-0.000468489)(3.82379,-0.000480974)(3.84615,-0.000434238)(3.86852,-0.000471146)(3.89088,-0.000438868)(3.91324,-0.00040141)(3.9356,-0.000426577)(3.95796,-0.000434876)(3.98032,-0.000462101)(4.00268,-0.000404734)(4.02504,-0.00037378)(4.04741,-0.000430047)(4.06977,-0.000446577)(4.09213,-0.00045167)(4.11449,-0.000467654)(4.13685,-0.000485521)(4.15921,-0.000565913)(4.18157,-0.000546586)(4.20394,-0.000484243)(4.2263,-0.000473885)(4.24866,-0.000498455)(4.27102,-0.000567464)(4.29338,-0.000547559)(4.31574,-0.000550901)(4.3381,-0.00054449)(4.36047,-0.000497286)(4.38283,-0.000533809)(4.40519,-0.000558904)(4.42755,-0.000573058)(4.44991,-0.000562084)(4.47227,-0.000538326)(4.49463,-0.000570921)(4.51699,-0.000582618)(4.53936,-0.000583566)(4.56172,-0.000578355)(4.58408,-0.000487312)(4.60644,-0.000559696)(4.6288,-0.000572919)(4.65116,-0.000585925)(4.67352,-0.000572761)(4.69589,-0.000494538)(4.71825,-0.000558576)(4.74061,-0.00054772)(4.76297,-0.000596692)(4.78533,-0.000606225)(4.80769,-0.000625103)(4.83005,-0.000640866)(4.85242,-0.000650975)(4.87478,-0.000659941)(4.89714,-0.000677729)(4.9195,-0.000598236)(4.94186,-0.000601043)(4.96422,-0.000552798)(4.98658,-0.000570677)};
\addplot+[semithick, mark options={solid, fill=markercolor, scale=1.5},line width=1.5pt]
coordinates{(0,0)(0.0223614,0)(0.0447227,0)(0.0670841,0)(0.0894454,0)(0.111807,0)(0.134168,0)(0.15653,0)(0.178891,0)(0.201252,0)(0.223614,0)(0.245975,0)(0.268336,0)(0.290698,0)(0.313059,0)(0.33542,0)(0.357782,0)(0.380143,0)(0.402504,0)(0.424866,-1e-09)(0.447227,-1e-09)(0.469589,-1e-09)(0.49195,-1e-09)(0.514311,-2e-09)(0.536673,-2e-09)(0.559034,-3e-09)(0.581395,-4e-09)(0.603757,-6e-09)(0.626118,-8e-09)(0.648479,-1.1e-08)(0.670841,-1.4e-08)(0.693202,-1.6e-08)(0.715564,-1.8e-08)(0.737925,-2.1e-08)(0.760286,-2.3e-08)(0.782648,-2.6e-08)(0.805009,-2.8e-08)(0.82737,-2.9e-08)(0.849732,-3.2e-08)(0.872093,-3.4e-08)(0.894454,-3.6e-08)(0.916816,-3.8e-08)(0.939177,-4.1e-08)(0.961538,-4.3e-08)(0.9839,-4.5e-08)(1.00626,-4.7e-08)(1.02862,-5e-08)(1.05098,-5.2e-08)(1.07335,-5.3e-08)(1.09571,-5.6e-08)(1.11807,-5.8e-08)(1.14043,-6e-08)(1.16279,-6.3e-08)(1.18515,-6.5e-08)(1.20751,-6.8e-08)(1.22987,-7e-08)(1.25224,-7.3e-08)(1.2746,-7.5e-08)(1.29696,-7.8e-08)(1.31932,-8e-08)(1.34168,-8.3e-08)(1.36404,-8.6e-08)(1.3864,-8.9e-08)(1.40877,-9.2e-08)(1.43113,-9.6e-08)(1.45349,-1.03e-07)(1.47585,-1.09e-07)(1.49821,-1.16e-07)(1.52057,-1.23e-07)(1.54293,-1.31e-07)(1.5653,-1.38e-07)(1.58766,-1.44e-07)(1.61002,-1.5e-07)(1.63238,-1.55e-07)(1.65474,-1.59e-07)(1.6771,-1.64e-07)(1.69946,-1.69e-07)(1.72182,-1.74e-07)(1.74419,-1.8e-07)(1.76655,-1.85e-07)(1.78891,-1.91e-07)(1.81127,-2e-07)(1.83363,-2.07e-07)(1.85599,-2.17e-07)(1.87835,-2.35e-07)(1.90072,-2.81e-07)(1.92308,-3.36e-07)(1.94544,-3.8e-07)(1.9678,-4.24e-07)(1.99016,-4.84e-07)(2.01252,-5.88e-07)(2.03488,-6.51e-07)(2.05725,-7.28e-07)(2.07961,-7.69e-07)(2.10197,-8.38e-07)(2.12433,-9.02e-07)(2.14669,-9.39e-07)(2.16905,-1.007e-06)(2.19141,-1.075e-06)(2.21377,-1.127e-06)(2.23614,-1.198e-06)(2.2585,-1.271e-06)(2.28086,-1.357e-06)(2.30322,-1.445e-06)(2.32558,-1.557e-06)(2.34794,-1.624e-06)(2.3703,-1.682e-06)(2.39267,-1.753e-06)(2.41503,-1.824e-06)(2.43739,-1.863e-06)(2.45975,-1.934e-06)(2.48211,-1.986e-06)(2.50447,-2.048e-06)(2.52683,-2.127e-06)(2.54919,-2.198e-06)(2.57156,-2.302e-06)(2.59392,-2.417e-06)(2.61628,-2.528e-06)(2.63864,-2.62e-06)(2.661,-2.713e-06)(2.68336,-2.837e-06)(2.70572,-2.918e-06)(2.72809,-2.978e-06)(2.75045,-3.184e-06)(2.77281,-3.413e-06)(2.79517,-3.61e-06)(2.81753,-3.8e-06)(2.83989,-4.076e-06)(2.86225,-4.294e-06)(2.88462,-4.515e-06)(2.90698,-4.659e-06)(2.92934,-4.802e-06)(2.9517,-4.891e-06)(2.97406,-5.005e-06)(2.99642,-5.1e-06)(3.01878,-5.182e-06)(3.04114,-5.307e-06)(3.06351,-5.355e-06)(3.08587,-5.428e-06)(3.10823,-5.524e-06)(3.13059,-5.615e-06)(3.15295,-5.671e-06)(3.17531,-5.732e-06)(3.19767,-5.808e-06)(3.22004,-5.906e-06)(3.2424,-6.007e-06)(3.26476,-6.1e-06)(3.28712,-6.161e-06)(3.30948,-6.321e-06)(3.33184,-6.64e-06)(3.3542,-6.899e-06)(3.37657,-7.171e-06)(3.39893,-7.336e-06)(3.42129,-7.516e-06)(3.44365,-7.686e-06)(3.46601,-7.804e-06)(3.48837,-7.933e-06)(3.51073,-8.032e-06)(3.53309,-8.232e-06)(3.55546,-8.458e-06)(3.57782,-8.635e-06)(3.60018,-8.833e-06)(3.62254,-9.031e-06)(3.6449,-9.225e-06)(3.66726,-9.392e-06)(3.68962,-9.621e-06)(3.71199,-9.894e-06)(3.73435,-1.0339e-05)(3.75671,-1.0642e-05)(3.77907,-1.0952e-05)(3.80143,-1.1019e-05)(3.82379,-1.1277e-05)(3.84615,-1.1522e-05)(3.86852,-1.1738e-05)(3.89088,-1.2122e-05)(3.91324,-1.2628e-05)(3.9356,-1.3079e-05)(3.95796,-1.3318e-05)(3.98032,-1.344e-05)(4.00268,-1.3641e-05)(4.02504,-1.3711e-05)(4.04741,-1.3904e-05)(4.06977,-1.4261e-05)(4.09213,-1.4532e-05)(4.11449,-1.4826e-05)(4.13685,-1.4972e-05)(4.15921,-1.5211e-05)(4.18157,-1.5556e-05)(4.20394,-1.5964e-05)(4.2263,-1.6395e-05)(4.24866,-1.6923e-05)(4.27102,-1.7291e-05)(4.29338,-1.7603e-05)(4.31574,-1.7719e-05)(4.3381,-1.7932e-05)(4.36047,-1.8075e-05)(4.38283,-1.8212e-05)(4.40519,-1.8555e-05)(4.42755,-1.885e-05)(4.44991,-1.915e-05)(4.47227,-1.9262e-05)(4.49463,-1.9332e-05)(4.51699,-1.95e-05)(4.53936,-1.9672e-05)(4.56172,-1.9904e-05)(4.58408,-2.0371e-05)(4.60644,-2.0888e-05)(4.6288,-2.1297e-05)(4.65116,-2.1718e-05)(4.67352,-2.2316e-05)(4.69589,-2.3218e-05)(4.71825,-2.3868e-05)(4.74061,-2.479e-05)(4.76297,-2.5232e-05)(4.78533,-2.6145e-05)(4.80769,-2.7087e-05)(4.83005,-2.7516e-05)(4.85242,-2.8621e-05)(4.87478,-2.9129e-05)(4.89714,-2.9955e-05)(4.9195,-3.0167e-05)(4.94186,-3.0453e-05)(4.96422,-3.0736e-05)(4.98658,-3.089e-05)};
\addplot+[semithick, mark options={solid, fill=markercolor, scale=1.5},line width=1.5pt]
coordinates{(0,0)(0.0223614,-1.501e-06)(0.0447227,-1.492e-06)(0.0670841,-1.512e-06)(0.0894454,-1.492e-06)(0.111807,-1.541e-06)(0.134168,-1.493e-06)(0.15653,-1.57e-06)(0.178891,-1.491e-06)(0.201252,-1.593e-06)(0.223614,-1.483e-06)(0.245975,-1.603e-06)(0.268336,-1.476e-06)(0.290698,-1.58e-06)(0.313059,-1.493e-06)(0.33542,-1.497e-06)(0.357782,-1.548e-06)(0.380143,-1.407e-06)(0.402504,-1.51e-06)(0.424866,-1.483e-06)(0.447227,-1.414e-06)(0.469589,-1.435e-06)(0.49195,-1.451e-06)(0.514311,-1.45e-06)(0.536673,-1.449e-06)(0.559034,-1.452e-06)(0.581395,-1.471e-06)(0.603757,-2.082e-06)(0.626118,-7.24e-06)(0.648479,-2.1429e-05)(0.670841,-4.1874e-05)(0.693202,-7.5441e-05)(0.715564,-0.00013616)(0.737925,-0.000183908)(0.760286,-0.000213063)(0.782648,-0.000291545)(0.805009,-0.000328774)(0.82737,-0.000392884)(0.849732,-0.000469711)(0.872093,-0.000503877)(0.894454,-0.000584673)(0.916816,-0.000602869)(0.939177,-0.000674569)(0.961538,-0.000693318)(0.9839,-0.000765078)(1.00626,-0.000796677)(1.02862,-0.000867978)(1.05098,-0.000919367)(1.07335,-0.000972257)(1.09571,-0.00102889)(1.11807,-0.00104448)(1.14043,-0.00109937)(1.16279,-0.00109125)(1.18515,-0.00113466)(1.20751,-0.00114192)(1.22987,-0.00117337)(1.25224,-0.00124019)(1.2746,-0.00125484)(1.29696,-0.00133974)(1.31932,-0.00135206)(1.34168,-0.00140303)(1.36404,-0.00141955)(1.3864,-0.00143357)(1.40877,-0.00144649)(1.43113,-0.00144389)(1.45349,-0.00145043)(1.47585,-0.00144102)(1.49821,-0.00144761)(1.52057,-0.00143612)(1.54293,-0.00144488)(1.5653,-0.00143432)(1.58766,-0.00144341)(1.61002,-0.00143545)(1.63238,-0.00143975)(1.65474,-0.00144261)(1.6771,-0.00144029)(1.69946,-0.00145464)(1.72182,-0.00145101)(1.74419,-0.00145515)(1.76655,-0.00146115)(1.78891,-0.00146845)(1.81127,-0.0014697)(1.83363,-0.00146979)(1.85599,-0.00148226)(1.87835,-0.00148716)(1.90072,-0.00154187)(1.92308,-0.00170577)(1.94544,-0.00203219)(1.9678,-0.002297)(1.99016,-0.0025478)(2.01252,-0.00291738)(2.03488,-0.00326841)(2.05725,-0.00392507)(2.07961,-0.00419599)(2.10197,-0.00459885)(2.12433,-0.00492182)(2.14669,-0.00508815)(2.16905,-0.00532457)(2.19141,-0.00537853)(2.21377,-0.00549491)(2.23614,-0.00553089)(2.2585,-0.00564566)(2.28086,-0.00569223)(2.30322,-0.00583025)(2.32558,-0.00585823)(2.34794,-0.00597893)(2.3703,-0.00600043)(2.39267,-0.00607114)(2.41503,-0.00608117)(2.43739,-0.00610465)(2.45975,-0.00612003)(2.48211,-0.00611426)(2.50447,-0.0061462)(2.52683,-0.00614774)(2.54919,-0.00618505)(2.57156,-0.00623471)(2.59392,-0.00626422)(2.61628,-0.00635589)(2.63864,-0.0063722)(2.661,-0.00645853)(2.68336,-0.00647227)(2.70572,-0.00651735)(2.72809,-0.00652694)(2.75045,-0.00654297)(2.77281,-0.00654684)(2.79517,-0.00654983)(2.81753,-0.00655171)(2.83989,-0.00655124)(2.86225,-0.00655374)(2.88462,-0.00655276)(2.90698,-0.0065555)(2.92934,-0.00655546)(2.9517,-0.0065567)(2.97406,-0.00655888)(2.99642,-0.00655711)(3.01878,-0.00656117)(3.04114,-0.00656106)(3.06351,-0.00656422)(3.08587,-0.00656497)(3.10823,-0.00656458)(3.13059,-0.00656609)(3.15295,-0.00656774)(3.17531,-0.00657456)(3.19767,-0.00657827)(3.22004,-0.00658299)(3.2424,-0.00660141)(3.26476,-0.00668435)(3.28712,-0.00676185)(3.30948,-0.00680482)(3.33184,-0.00685311)(3.3542,-0.00694306)(3.37657,-0.00711731)(3.39893,-0.00718522)(3.42129,-0.0074407)(3.44365,-0.00754597)(3.46601,-0.0078209)(3.48837,-0.00795125)(3.51073,-0.00816502)(3.53309,-0.00824821)(3.55546,-0.0084143)(3.57782,-0.00847104)(3.60018,-0.00864086)(3.62254,-0.00868927)(3.6449,-0.00880746)(3.66726,-0.00883093)(3.68962,-0.0088981)(3.71199,-0.00890345)(3.73435,-0.0089308)(3.75671,-0.00893524)(3.77907,-0.00894396)(3.80143,-0.0089607)(3.82379,-0.00896453)(3.84615,-0.00898898)(3.86852,-0.00899178)(3.89088,-0.00902498)(3.91324,-0.00903543)(3.9356,-0.00905689)(3.95796,-0.00907194)(3.98032,-0.00908387)(4.00268,-0.00909807)(4.02504,-0.00910247)(4.04741,-0.00910643)(4.06977,-0.00910885)(4.09213,-0.0091101)(4.11449,-0.00911756)(4.13685,-0.00911936)(4.15921,-0.00913139)(4.18157,-0.00913261)(4.20394,-0.00914881)(4.2263,-0.00915143)(4.24866,-0.00916626)(4.27102,-0.0091709)(4.29338,-0.00917985)(4.31574,-0.00918889)(4.3381,-0.00919325)(4.36047,-0.00920346)(4.38283,-0.00920557)(4.40519,-0.0092157)(4.42755,-0.0092184)(4.44991,-0.00922455)(4.47227,-0.00922956)(4.49463,-0.00923368)(4.51699,-0.00923985)(4.53936,-0.00924184)(4.56172,-0.00924747)(4.58408,-0.00924777)(4.60644,-0.00925573)(4.6288,-0.00927067)(4.65116,-0.00929596)(4.67352,-0.00931453)(4.69589,-0.00933221)(4.71825,-0.00934949)(4.74061,-0.00937827)(4.76297,-0.00943242)(4.78533,-0.00945936)(4.80769,-0.00955077)(4.83005,-0.00957644)(4.85242,-0.00967105)(4.87478,-0.00968897)(4.89714,-0.00976906)(4.9195,-0.0097836)(4.94186,-0.00986214)(4.96422,-0.00987648)(4.98658,-0.00993778)};
\addplot+[semithick, mark options={solid, fill=markercolor, scale=1.5},line width=1.5pt]
coordinates{(0,0)(0.0223614,0)(0.0447227,0)(0.0670841,-3e-09)(0.0894454,-5e-09)(0.111807,-1.8e-08)(0.134168,-2.2e-08)(0.15653,-3.7e-08)(0.178891,-4.5e-08)(0.201252,-5.8e-08)(0.223614,-7.2e-08)(0.245975,-8.3e-08)(0.268336,-1.01e-07)(0.290698,-1.11e-07)(0.313059,-1.27e-07)(0.33542,-1.36e-07)(0.357782,-1.47e-07)(0.380143,-1.56e-07)(0.402504,-1.61e-07)(0.424866,-1.69e-07)(0.447227,-1.77e-07)(0.469589,-1.83e-07)(0.49195,-1.92e-07)(0.514311,-1.98e-07)(0.536673,-1.98e-07)(0.559034,-2e-07)(0.581395,-2.15e-07)(0.603757,-8.93e-07)(0.626118,-6.38e-06)(0.648479,-2.1011e-05)(0.670841,-4.1804e-05)(0.693202,-7.4459e-05)(0.715564,-0.000133992)(0.737925,-0.000184665)(0.760286,-0.000213533)(0.782648,-0.000290121)(0.805009,-0.000334947)(0.82737,-0.000394335)(0.849732,-0.000475479)(0.872093,-0.00050793)(0.894454,-0.00058887)(0.916816,-0.000609115)(0.939177,-0.000682087)(0.961538,-0.000704675)(0.9839,-0.000779898)(1.00626,-0.000813505)(1.02862,-0.000889959)(1.05098,-0.000938878)(1.07335,-0.000999118)(1.09571,-0.00104954)(1.11807,-0.00107701)(1.14043,-0.00112488)(1.16279,-0.00113111)(1.18515,-0.0011707)(1.20751,-0.0011878)(1.22987,-0.00121699)(1.25224,-0.00127076)(1.2746,-0.00128268)(1.29696,-0.00134862)(1.31932,-0.00136418)(1.34168,-0.00139919)(1.36404,-0.0014194)(1.3864,-0.00142712)(1.40877,-0.00143979)(1.43113,-0.00144087)(1.45349,-0.00144276)(1.47585,-0.00144302)(1.49821,-0.00144306)(1.52057,-0.00144312)(1.54293,-0.00144527)(1.5653,-0.00144624)(1.58766,-0.00144956)(1.61002,-0.00145097)(1.63238,-0.00145267)(1.65474,-0.00145729)(1.6771,-0.00145865)(1.69946,-0.00146205)(1.72182,-0.00146303)(1.74419,-0.00146883)(1.76655,-0.00147568)(1.78891,-0.0014842)(1.81127,-0.00148562)(1.83363,-0.00148596)(1.85599,-0.00149905)(1.87835,-0.00150321)(1.90072,-0.00156195)(1.92308,-0.00171915)(1.94544,-0.00204483)(1.9678,-0.00230588)(1.99016,-0.00257678)(2.01252,-0.00298132)(2.03488,-0.00331174)(2.05725,-0.0039456)(2.07961,-0.00423198)(2.10197,-0.00462054)(2.12433,-0.00494758)(2.14669,-0.005108)(2.16905,-0.00533454)(2.19141,-0.00539251)(2.21377,-0.00550667)(2.23614,-0.00554989)(2.2585,-0.00566401)(2.28086,-0.00571638)(2.30322,-0.00584925)(2.32558,-0.0058825)(2.34794,-0.00599676)(2.3703,-0.00602221)(2.39267,-0.00608863)(2.41503,-0.00610101)(2.43739,-0.00612743)(2.45975,-0.00614212)(2.48211,-0.00614501)(2.50447,-0.00617395)(2.52683,-0.0061843)(2.54919,-0.0062148)(2.57156,-0.00626235)(2.59392,-0.00628681)(2.61628,-0.00637004)(2.63864,-0.00638473)(2.661,-0.00646162)(2.68336,-0.00647908)(2.70572,-0.00651849)(2.72809,-0.00653161)(2.75045,-0.00654587)(2.77281,-0.00655098)(2.79517,-0.00655531)(2.81753,-0.00655642)(2.83989,-0.00655867)(2.86225,-0.00655952)(2.88462,-0.00656139)(2.90698,-0.00656208)(2.92934,-0.00656392)(2.9517,-0.00656457)(2.97406,-0.00656665)(2.99642,-0.00656685)(3.01878,-0.00656742)(3.04114,-0.00656846)(3.06351,-0.006571)(3.08587,-0.00657272)(3.10823,-0.00657397)(3.13059,-0.00657436)(3.15295,-0.00657501)(3.17531,-0.00658187)(3.19767,-0.00658543)(3.22004,-0.00659029)(3.2424,-0.00661016)(3.26476,-0.00669335)(3.28712,-0.00677013)(3.30948,-0.00681624)(3.33184,-0.00686486)(3.3542,-0.00695202)(3.37657,-0.00713071)(3.39893,-0.00719487)(3.42129,-0.00745076)(3.44365,-0.00756191)(3.46601,-0.00782767)(3.48837,-0.00796337)(3.51073,-0.00816842)(3.53309,-0.00825828)(3.55546,-0.00842167)(3.57782,-0.00848655)(3.60018,-0.00865259)(3.62254,-0.00870645)(3.6449,-0.00881793)(3.66726,-0.00884436)(3.68962,-0.00890632)(3.71199,-0.00891317)(3.73435,-0.00893858)(3.75671,-0.00894418)(3.77907,-0.00895351)(3.80143,-0.00897042)(3.82379,-0.00897606)(3.84615,-0.00899934)(3.86852,-0.00900415)(3.89088,-0.00903514)(3.91324,-0.00904501)(3.9356,-0.00906472)(3.95796,-0.00907762)(3.98032,-0.0090889)(4.00268,-0.0091006)(4.02504,-0.00910549)(4.04741,-0.00910817)(4.06977,-0.00911098)(4.09213,-0.00911235)(4.11449,-0.00911932)(4.13685,-0.00912184)(4.15921,-0.00913283)(4.18157,-0.00913519)(4.20394,-0.00915044)(4.2263,-0.00915424)(4.24866,-0.00916871)(4.27102,-0.00917461)(4.29338,-0.00918353)(4.31574,-0.00919328)(4.3381,-0.00919727)(4.36047,-0.00920711)(4.38283,-0.00920864)(4.40519,-0.00921796)(4.42755,-0.00922077)(4.44991,-0.00922713)(4.47227,-0.00923191)(4.49463,-0.00923572)(4.51699,-0.00924155)(4.53936,-0.00924357)(4.56172,-0.00924896)(4.58408,-0.00924942)(4.60644,-0.00925725)(4.6288,-0.0092719)(4.65116,-0.00929619)(4.67352,-0.00931433)(4.69589,-0.0093343)(4.71825,-0.00935257)(4.74061,-0.00937917)(4.76297,-0.00943585)(4.78533,-0.00946079)(4.80769,-0.00955233)(4.83005,-0.00957809)(4.85242,-0.00966961)(4.87478,-0.00968864)(4.89714,-0.00976727)(4.9195,-0.00978364)(4.94186,-0.00986232)(4.96422,-0.00987798)(4.98658,-0.00993924)};
\end{axis}
\end{tikzpicture}
}
\caption{Evolution of entropy over time for TDRK-2 and RK-45 schemes using both entropy conservative (EC) and entropy stable (ES) spectral collocation formulations of the Burgers' and shallow water equations. }
\label{fig:tdrkentropy}
\end{figure}
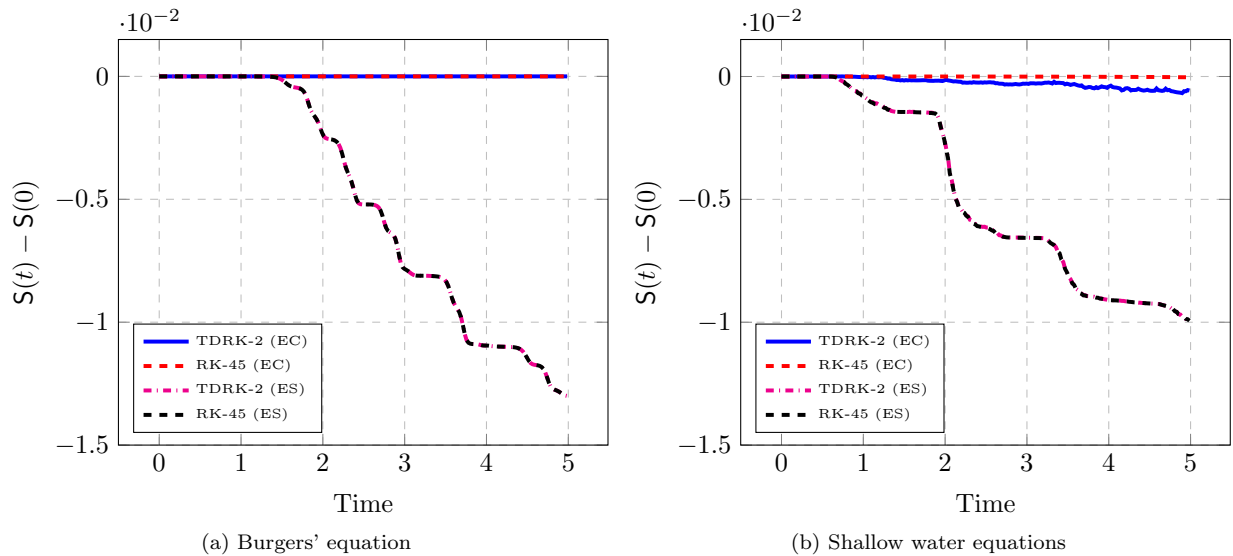
Finally, we plot the integrated entropy $\mathsf{S}(t) = \int_{\Omega} S(\bm{u})\diff{x}$ over time in Figure~\ref{fig:tdrkentropy}. For Burgers' equation, we do not observe significant differences in the entropy dissipation for TDRK-2 and RK-45 schemes. For the entropy conservative formulation of the shallow water equations, the TDRK-2 scheme produces slightly more entropy dissipation than RK-45; however, both two schemes produce similar entropy dissipation for the entropy stable formulation. 

\section{Time-implicit discretizations on triangular meshes}
\label{sec:implicit}

Jacobian matrices also appear in time-implicit discretizations of nonlinear ODEs.  Consider the implicit midpoint rule 
\[
\fnt{u}^{k+1} = \fnt{u}^k - \Delta t \fnt{r}\LRp{\frac{\fnt{u}^{k+1} + \fnt{u}^k}{2}}.
\]
This can be rewritten in the following form where $\fnt{u}^{k+1/2} = \frac{\fnt{u}^{k+1}+\fnt{u}^k}{2}$
\eq{
\fnt{u}^{k+1/2} &= \fnt{u}^k - \frac{\Delta t}{2}\fnt{r}\LRp{\fnt{u}^{k+1/2}}\\
\fnt{u}^{k+1} &= 2\fnt{u}^{k+1/2} - \fnt{u}^k.
}
Solving for $\fnt{u}^{k+1/2}$ is a nonlinear equation and can be done via Newton's method
\eq{
%&\fnt{u}^{k+1/2} + \frac{\Delta t}{2}\fnt{f}\LRp{\fnt{u}^{k+1/2}} - \fnt{u}^k = \fnt{0}\\
\fnt{u}^{k+1/2,\ell+1} = \fnt{u}^{k+1/2,\ell} - \LRp{\fnt{I} + \frac{\Delta t}{2}\LRl{\pd{\fnt{r}}{\fnt{u}}}_{\fnt{u}^{k+1/2,\ell}}}^{-1}\LRp{\fnt{u}^{k+1/2,\ell} + \frac{\Delta t}{2}\fnt{r}\LRp{\fnt{u}^{k+1/2,\ell}} - \fnt{u}^k}.
}
\bnote{where $\ell$ denotes the Newton iteration index.} 

\bnote{All linear systems are solved using Julia's sparse direct solver.} 
We utilize a relative tolerance of $1e-11$ for the Newton iteration, and determine the time-step $\Delta t$ using the following estimate 
\[
\Delta t = {\rm CFL} \times \frac{ h_{\min}} { C_N}, \qquad C_N = \frac{(N+1)(N+2)}{2},
\]
where ${\rm CFL}$ is the CFL constant, $h_{\min}$ is the size of the smallest element in the mesh, and $C_N$ is the $N$-dependent trace constant for a degree $N$ polynomial space on the reference triangle \cite{warburton2003constants}.\footnote{Trace constants $C_N$ for other element types are derived in \cite{chan2015gpu}.}

\subsubsection{2D Burgers' equation}

We consider energy conservative and energy stable discretizations of 2D Burgers' equation
\[
\pd{u}{t} + \frac{1}{2}\pd{u^2}{x} = 0
\]
with periodic boundary conditions on the domain $[-1,1]^2$. For the initial condition $u(\bm{x},0) = -\sin(\pi x)$, the solution forms a shock around $T = 1/2$. 

We discretize the Burgers' equation using an energy conservative (or stable) scheme in space \cite{chan2017discretely, chan2019skew} and an implicit midpoint discretization in time.  The spatial discretization utilizes a degree $N$ polynomial space, degree $2N$ volume quadrature, and an $(N+1)$-point Gauss quadrature for faces.  An energy stable scheme is constructed by adding a local Lax-Friedrichs penalization term, $ - \frac{\lambda}{2}\jump{u}$, to the energy conservative flux contribution, where $\lambda = \max\LRp{\LRb{u^+},\LRb{u}}$ is the maximum wavespeed at an interface.  We utilize both uniform and ``squeezed'' triangular meshes with very small elements (see Figure~\ref{fig:meshburg}). \bnote{This mesh is constructed by taking the $x$-coordinate $x_i$ of vertices in a uniform triangular mesh (constructed by bisecting each element in a uniform mesh quadrilateral mesh) and transforming them via $x_i - .3\sin(\pi x_i)$ to produce a new mesh.}

\begin{figure}
\centering
\subfloat[Uniform mesh]{\includegraphics[width=.4\textwidth]{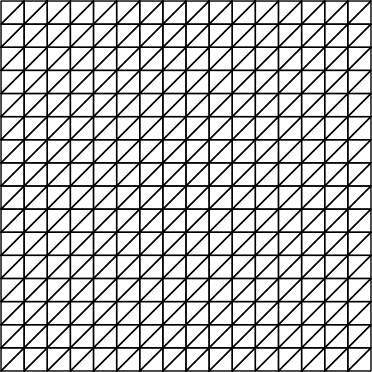}}
\hspace{2em}
\subfloat[Solution on a uniform mesh]{\includegraphics[width=.4\textwidth]{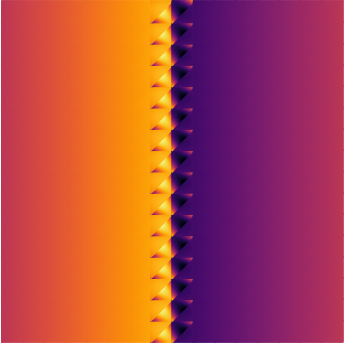}}\\
\subfloat[Anisotropic mesh]{\includegraphics[width=.4\textwidth]{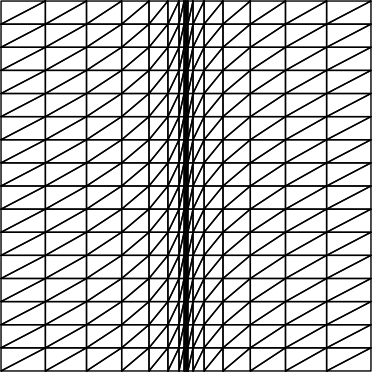}}
\hspace{2em}
\subfloat[Solution on an anisotropic mesh]{\includegraphics[width=.4\textwidth]{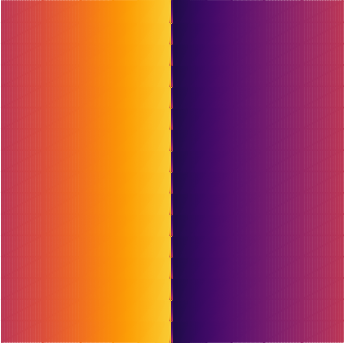}}
\caption{$N=2$ solutions of Burgers' equation at $T=1$ on uniform and anisotropic ``squeezed'' $16\times 16$ meshes.}
\label{fig:meshburg}
\end{figure}
Since the implicit midpoint rule is a symplectic integrator, we expect energy to be conserved up to machine precision for an energy conservative scheme. We set the initial condition randomly, remove the Lax-Friedrichs penalization, and run until time $T=1$ using a CFL of 10 on both uniform and squeezed  $8\times 8$ meshes with $N=2$.  For the uniform mesh, the total change in energy was $-2.665e-15$. The squeezed mesh behaved similarly, with a total change in energy of $3.553e-15$. 

Next, we add local Lax-Friedrichs dissipation and run with the initial condition $-\sin(\pi x)$ until time $T=1$ using a CFL of 250. Figure~\ref{fig:meshburg} shows solutions for both cases. In each case, oscillations appear in a one-element vicinity around the shock. For both meshes, the Newton iteration converges in between 4 and 7 iterations. We note that for an entropy conservative scheme with a randomly generated initial condition, increasing the CFL further resulted in non-convergence of the Newton iteration. However, either switching to the initial condition $u(x,y,0) = -\sin(\pi x)$ or adding local Lax-Friedrichs dissipation avoids stalling of the Newton iteration.

\subsubsection{2D compressible Euler equations}
\label{sec:euler}
Finally, we consider a time-implicit discretization of the 2D compressible Euler equations
\begin{align*}
\pd{\rho}{t} + \pd{\LRp{\rho u}}{x_1} + \pd{\LRp{\rho v}}{x_2} &= 0,\\
\pd{\rho u}{t} + \pd{\LRp{\rho u^2 + p }}{x_1} + \pd{\LRp{\rho uv}}{x_2} &= 0,\nonumber\\
\pd{\rho v}{t} + \pd{\LRp{\rho uv}}{x_1} + \pd{\LRp{\rho v^2 + p }}{x_2} &= 0,\nonumber\\
\pd{E}{t} + \pd{\LRp{u(E+p)}}{x_1} + \pd{\LRp{v(E+p)}}{x_2}&= 0,\nonumber
\end{align*}
Here, $\gamma = 1.4$, and $p = (\gamma-1)\rho e$ is the pressure, where $\rho e = E - \frac{1}{2}\rho (u^2+v^2)$ is the specific internal energy. We construct a scheme which is stable with respect to the unique entropy for the compressible Navier-Stokes equations \cite{hughes1986new}
\begin{equation*}
S(\bm{u}) = -\frac{\rho s}{\gamma-1}, \qquad \bm{u} = \LRs{\rho,\rho u,\rho v, E}^T,
\label{eq:entropy2d}
\end{equation*}
where $s = \log\LRp{\frac{p}{\rho^\gamma}}$ denotes the specific entropy. 
Mappings between conservative variables $\bm{u}$ and entropy variables $\bm{v} = \LRc{v_1,v_2,v_3,v_4}$ in two dimensions are given by
\begin{align*}
v_1 &= \frac{\rho e (\gamma + 1 - s) - E}{\rho e}, \qquad v_2 = \frac{\rho u}{\rho e}, \qquad v_3 = \frac{\rho v}{\rho e}, \qquad v_4 = -\frac{\rho}{\rho e}\\
\rho &= -(\rho e) v_4, \qquad \rho u = (\rho e) v_2, \qquad \rho v = (\rho e) v_3, \qquad E = (\rho e)\LRp{1 - \frac{{v_2^2+v_3^2}}{2 v_4}},
\end{align*}
where $\rho e$ and $s$ can be expressed in terms of the entropy variables as
\begin{equation*}
\rho e = \LRp{\frac{(\gamma-1)}{\LRp{-v_4}^{\gamma}}}^{1/(\gamma-1)}e^{\frac{-s}{\gamma-1}}, \qquad s = \gamma - v_1 + \frac{{v_2^2+v_3^2}}{2v_4}.
\end{equation*}
We utilize the entropy conservative and kinetic energy preserving finite volume fluxes derived in \cite{chandrashekar2013kinetic}, and apply entropy dissipation by adding a local Lax-Friedrichs penalization term, $-\frac{\lambda}{2}\jump{\bm{u}}$ \cite{chen2017entropy, chan2018discretely}. We compute at each point on an interface the local wavespeed $a = \LRb{\bm{u}\cdot\bm{n}} + c$, where $c = \sqrt{\gamma\rho/p}$ is the sound speed and $\bm{u}\cdot\bm{n}$ is the normal velocity. The local Lax-Friedrichs parameter is then computed via $\lambda = \sqrt{\frac{(a^+)^2+a^2}{2}}$, where $a^+,a$ are computed using the interior and exterior solution states, respectively. 

We employ an entropy stable modal DG formulation from \cite{chan2018discretely} on triangles using total degree $N$ polynomials. The surface quadrature is constructed using 1D $(N+1)$ point Gauss quadrature rules on each face and we use a volume quadrature \cite{xiao2010quadrature} which is exact for degree $2N$ polynomials. Since this is a non-collocated formulation, we need the change of variables matrices $\pd{\bm{u}}{\bm{v}}, \pd{\bm{v}}{\bm{u}}$ to evaluate (\ref{eq:changeofvars}). These matrices can be computed using automatic differentiation or using the explicit formulas \cite{barth1999numerical}
\eq{
\pd{\bm{u}}{\bm{v}} = \bmat{
\rho    	 & \rho u            & \rho v   		&   E\\
\rho u  	& \rho u^2 + p	& \rho uv 	 	&   \rho uH \\
\rho v	& \rho uv	  	& \rho v^2+p 	&   \rho vH \\
E 	  	& \rho u H   	&\rho v H      	& \rho H^2-c^2\frac{p}{\gamma-1}
}, \\
\pd{\bm{v}}{\bm{u}} = -\frac{1}{\rho e v_4}\bmat{
\gamma+k^2	& kv_2     		& kv_3   		&   (k+1)v_4\\
k v_2  		& v_2^2-v_4	& v_2v_3 	 	&   v_2v_4 \\
k v_3		& v_2v_3	  	& v_3^2-v_4 	&   v_3v_4 \\
(k+1)v_4 		& v_2v_4  	& v_3v_4      	&  v_4^2
}
%γ+k^2     k*V[2]          k*V[3]         V[4]*(k+1);
% k*V[2]     V[2]^2-V[4]     V[2]*V[3]     V[2]*V[4];
% k*V[3]     V[2]*V[3]      V[3]^2-V[4]    V[3]*V[4]
% V[4]*(k+1) V[2]*V[4]       V[3]*V[4]      V[4]^2
%-dVdU/(rhoe*V[4])                    
}
where $c$ is the sound speed, $H = c^2/(\gamma-1) + \frac{1}{2}(u^2+v^2)$ is the enthalpy, and $k = \frac{1}{2}(v_2^2+v_3^2)/v_4$.

There exist several choices for entropy conservative fluxes \cite{ismail2009affordable, ranocha2018comparison, chandrashekar2013kinetic}.  We utilize the
the entropy conservative numerical fluxes given by Chandrashekar in \cite{chandrashekar2013kinetic}
\begin{align*}
&f^1_{1,S}(\bm{u}_L,\bm{u}_R) = \avg{\rho}^{\log} \avg{u},& &f^1_{2,S}(\bm{u}_L,\bm{u}_R) = \avg{\rho}^{\log} \avg{v},&\\
&f^2_{1,S}(\bm{u}_L,\bm{u}_R) = f^1_{1,S} \avg{u} + p_{\rm avg},&  &f^2_{2,S}(\bm{u}_L,\bm{u}_R) = f^1_{2,S} \avg{u},&\nonumber\\
&f^3_{1,S}(\bm{u}_L,\bm{u}_R) = f^2_{2,S},& &f^3_{2,S}(\bm{u}_L,\bm{u}_R) = f^1_{2,S} \avg{v} + p_{\rm avg},&\nonumber\\
&f^4_{1,S}(\bm{u}_L,\bm{u}_R) = \LRp{E_{\rm avg} + p_{\rm avg}}\avg{u},& &f^4_{2,S}(\bm{u}_L,\bm{u}_R) = \LRp{E_{\rm avg} + p_{\rm avg} }\avg{v},& \nonumber
\end{align*}
where the quantities $p_{\rm avg}, E_{\rm avg},  \nor{\bm{u}}^2_{\rm avg}$ are defined as
\begin{gather*}
p_{\rm avg} = \frac{\avg{\rho}}{2\avg{\beta}}, \qquad E_{\rm avg} = \frac{\avg{\rho}^{\log}}{2\avg{\beta}^{\log}\LRp{\gamma -1}}   + \frac{\nor{\bm{u}}^2_{\rm avg}}{2}, \qquad  \beta = \frac{\rho}{2p},\\
 \nor{\bm{u}}^2_{\rm avg} = 2(\avg{u}^2 + \avg{v}^2) - \LRp{\avg{u^2} +\avg{v^2}} = u^+u + v^+v \nonumber,
\end{gather*}
where $\avg{u} = \frac{1}{2}\LRp{u^+ + u}$, where $u^+,u$ denotes the exterior and interior states across the interface of an element $D^k$.

\begin{figure}
\centering
\subfloat[$N=2$]{
\begin{tikzpicture}
\begin{axis}[
    width=.49\textwidth,
    xlabel={Time },
    ylabel={$\mathsf{S}(t)-\mathsf{S}(0)$}, 
    ymin=-1.2e-7, ymax=1.2e-7,
    legend pos=north east, legend cell align=left, legend style={font=\tiny},	
    xmajorgrids=true, ymajorgrids=true, grid style=dashed, 
    legend entries={$\text{CFL} = \frac{1}{4}$, $\text{CFL} = \frac{1}{8}$}
]
\pgfplotsset{
cycle list={{blue}, {red, dashed}}
}
\addplot+[semithick, mark options={solid, fill=markercolor, scale=1.5},line width=1.5pt]
coordinates{(0.0104112,0)(0.0624675,-5.19164e-09)(0.114524,3.61871e-09)(0.16658,1.41953e-08)(0.218636,1.698e-08)(0.270692,2.17401e-08)(0.322749,2.20701e-08)(0.374805,2.58682e-08)(0.426861,3.62348e-08)(0.478917,4.08486e-08)(0.530973,4.10812e-08)(0.58303,3.93001e-08)(0.635086,4.10676e-08)(0.687142,4.25268e-08)(0.739198,4.04793e-08)(0.791255,3.84745e-08)(0.843311,3.90362e-08)(0.895367,4.95432e-08)(0.947423,5.48887e-08)(0.999479,5.29454e-08)(1.05154,4.38309e-08)(1.10359,3.2088e-08)(1.15565,3.19724e-08)(1.2077,3.03918e-08)(1.25976,2.69956e-08)(1.31182,2.10818e-08)(1.36387,1.88147e-08)(1.41593,2.08275e-08)(1.46799,2.47401e-08)(1.52004,2.57103e-08)(1.5721,2.29511e-08)(1.62415,2.33854e-08)(1.67621,2.44561e-08)(1.72827,2.46443e-08)(1.78032,2.66939e-08)(1.83238,3.08554e-08)(1.88444,3.7393e-08)(1.93649,4.12304e-08)(1.98855,3.7691e-08)(2.0406,3.94884e-08)(2.09266,4.0872e-08)(2.14472,4.02386e-08)(2.19677,3.57267e-08)(2.24883,3.15917e-08)(2.30088,2.84344e-08)(2.35294,2.87772e-08)(2.405,2.91958e-08)(2.45705,2.25966e-08)(2.50911,1.61358e-08)(2.56117,1.34573e-08)(2.61322,1.41786e-08)(2.66528,1.19803e-08)(2.71733,8.1798e-09)(2.76939,-1.30194e-10)(2.82145,-4.11189e-09)(2.8735,-5.83913e-09)(2.92556,-9.35513e-09)(2.97762,-8.59194e-09)(3.02967,-1.54769e-08)(3.08173,-2.41233e-08)(3.13378,-2.92763e-08)(3.18584,-3.29543e-08)(3.2379,-3.84601e-08)(3.28995,-4.10895e-08)(3.34201,-4.48672e-08)(3.39407,-4.35315e-08)(3.44612,-3.95163e-08)(3.49818,-3.71962e-08)(3.55023,-4.00019e-08)(3.60229,-4.57234e-08)(3.65435,-4.19679e-08)(3.7064,-3.4054e-08)(3.75846,-2.95295e-08)(3.81052,-2.92591e-08)(3.86257,-2.87019e-08)(3.91463,-2.51387e-08)(3.96668,-2.27356e-08)(4.01874,-2.15263e-08)(4.0708,-2.06954e-08)(4.12285,-1.71694e-08)(4.17491,-1.78114e-08)(4.22697,-1.37899e-08)(4.27902,-1.07959e-08)(4.33108,-8.6262e-09)(4.38313,-1.04675e-08)(4.43519,-1.65911e-08)(4.48725,-2.19518e-08)(4.5393,-2.24303e-08)(4.59136,-2.37826e-08)(4.64341,-3.03371e-08)(4.69547,-3.60098e-08)(4.74753,-3.96228e-08)(4.79958,-4.1519e-08)(4.85164,-4.53207e-08)(4.9037,-5.35949e-08)(4.95575,-5.9411e-08)(5.00781,-5.81562e-08)(5.05986,-5.38951e-08)(5.11192,-4.8889e-08)(5.16398,-4.68009e-08)(5.21603,-4.81725e-08)(5.26809,-5.07196e-08)(5.32015,-5.36805e-08)(5.3722,-5.12641e-08)(5.42426,-5.03726e-08)(5.47631,-5.05688e-08)(5.52837,-5.18869e-08)(5.58043,-5.21662e-08)(5.63248,-4.8108e-08)(5.68454,-4.52193e-08)(5.7366,-4.41298e-08)(5.78865,-4.65294e-08)(5.84071,-4.41288e-08)(5.89276,-3.92736e-08)(5.94482,-3.15278e-08)(5.99688,-2.80072e-08)(6.04893,-2.21175e-08)(6.10099,-1.51606e-08)(6.15305,-7.59983e-09)(6.2051,-2.52116e-09)(6.25716,-1.7969e-09)(6.30921,-1.41675e-09)(6.36127,7.23879e-10)(6.41333,6.97564e-09)(6.46538,1.43514e-08)(6.51744,1.95517e-08)(6.5695,2.05547e-08)(6.62155,1.62876e-08)(6.67361,1.50253e-08)(6.72566,1.88767e-08)(6.77772,2.3911e-08)(6.82978,2.10577e-08)(6.88183,1.7588e-08)(6.93389,1.29279e-08)(6.98594,1.44284e-08)(7.038,1.93594e-08)(7.09006,1.80011e-08)(7.14211,1.42037e-08)(7.19417,9.54518e-09)(7.24623,8.65161e-09)(7.29828,1.3258e-08)(7.35034,1.49231e-08)(7.40239,9.00924e-09)(7.45445,2.17367e-09)(7.50651,4.43473e-09)(7.55856,9.19695e-09)(7.61062,1.06003e-08)(7.66268,9.86686e-09)(7.71473,9.31936e-09)(7.76679,1.23324e-08)(7.81884,1.39361e-08)(7.8709,1.74317e-08)(7.92296,2.33593e-08)(7.97501,2.64835e-08)(8.02707,2.68992e-08)(8.07913,3.02841e-08)(8.13118,3.68101e-08)(8.18324,3.87252e-08)(8.23529,3.25036e-08)(8.28735,2.93548e-08)(8.33941,3.03268e-08)(8.39146,3.3216e-08)(8.44352,3.47554e-08)(8.49558,3.17857e-08)(8.54763,2.85479e-08)(8.59969,2.95511e-08)(8.65174,3.44376e-08)(8.7038,3.69378e-08)(8.75586,3.3702e-08)(8.80791,2.84671e-08)(8.85997,2.53638e-08)(8.91202,3.10969e-08)(8.96408,4.03325e-08)(9.01614,4.02743e-08)(9.06819,3.50539e-08)(9.12025,3.0969e-08)(9.17231,3.64115e-08)(9.22436,4.13008e-08)(9.27642,3.27548e-08)(9.32847,2.41588e-08)(9.38053,1.73819e-08)(9.43259,1.68999e-08)(9.48464,2.01661e-08)(9.5367,1.53921e-08)(9.58876,4.87699e-09)(9.64081,-4.86388e-10)(9.69287,-3.00433e-09)(9.74492,3.32753e-10)(9.79698,1.09248e-09)(9.84904,-9.12398e-09)(9.90109,-1.90228e-08)(9.95315,-2.69945e-08)(10.0052,-2.66758e-08)};
\addplot+[semithick, mark options={solid, fill=markercolor, scale=1.5},line width=1.5pt]
coordinates{(0.00520562,0)(0.0572618,-1.60674e-09)(0.109318,8.79859e-10)(0.161374,3.25566e-09)(0.213431,3.97119e-09)(0.265487,5.30783e-09)(0.317543,5.26604e-09)(0.369599,6.14867e-09)(0.421655,8.66441e-09)(0.473712,9.97581e-09)(0.525768,1.0055e-08)(0.577824,9.54419e-09)(0.62988,9.97375e-09)(0.681936,1.02958e-08)(0.733993,9.81252e-09)(0.786049,9.21027e-09)(0.838105,9.29312e-09)(0.890161,1.19234e-08)(0.942218,1.33778e-08)(0.994274,1.29208e-08)(1.04633,1.06562e-08)(1.09839,7.7939e-09)(1.15044,7.80227e-09)(1.2025,7.47556e-09)(1.25455,6.65028e-09)(1.30661,5.29901e-09)(1.35867,4.77395e-09)(1.41072,5.45272e-09)(1.46278,6.42948e-09)(1.51484,6.42984e-09)(1.56689,5.83453e-09)(1.61895,5.95679e-09)(1.671,6.05766e-09)(1.72306,6.0783e-09)(1.77512,6.32215e-09)(1.82717,7.43796e-09)(1.87923,9.09443e-09)(1.93129,9.54042e-09)(1.98334,8.70534e-09)(2.0354,9.30744e-09)(2.08745,9.52954e-09)(2.13951,9.22753e-09)(2.19157,7.90258e-09)(2.24362,6.72646e-09)(2.29568,5.76371e-09)(2.34774,5.92525e-09)(2.39979,5.70354e-09)(2.45185,3.46117e-09)(2.5039,1.92715e-09)(2.55596,1.35099e-09)(2.60802,1.52576e-09)(2.66007,6.54508e-10)(2.71213,-2.49502e-10)(2.76419,-2.27452e-09)(2.81624,-3.75411e-09)(2.8683,-3.92036e-09)(2.92035,-4.56144e-09)(2.97241,-4.30454e-09)(3.02447,-6.05385e-09)(3.07652,-7.71546e-09)(3.12858,-9.01051e-09)(3.18064,-1.00082e-08)(3.23269,-1.11996e-08)(3.28475,-1.18653e-08)(3.3368,-1.28352e-08)(3.38886,-1.22179e-08)(3.44092,-1.08891e-08)(3.49297,-1.07257e-08)(3.54503,-1.19735e-08)(3.59708,-1.33932e-08)(3.64914,-1.16866e-08)(3.7012,-9.90163e-09)(3.75325,-8.76115e-09)(3.80531,-8.6281e-09)(3.85737,-8.94673e-09)(3.90942,-7.64758e-09)(3.96148,-7.12986e-09)(4.01353,-6.70285e-09)(4.06559,-5.77575e-09)(4.11765,-5.3336e-09)(4.1697,-5.86831e-09)(4.22176,-4.87756e-09)(4.27382,-3.51522e-09)(4.32587,-3.36719e-09)(4.37793,-4.16078e-09)(4.42998,-6.35132e-09)(4.48204,-7.32061e-09)(4.5341,-7.84401e-09)(4.58615,-9.16995e-09)(4.63821,-1.14058e-08)(4.69027,-1.27026e-08)(4.74232,-1.32301e-08)(4.79438,-1.40614e-08)(4.84643,-1.598e-08)(4.89849,-1.90408e-08)(4.95055,-2.03769e-08)(5.0026,-1.98956e-08)(5.05466,-1.90596e-08)(5.10672,-1.83453e-08)(5.15877,-1.8227e-08)(5.21083,-1.93541e-08)(5.26288,-2.00899e-08)(5.31494,-1.97675e-08)(5.367,-1.93316e-08)(5.41905,-1.86691e-08)(5.47111,-1.93815e-08)(5.52317,-1.97817e-08)(5.57522,-1.91056e-08)(5.62728,-1.7962e-08)(5.67933,-1.67808e-08)(5.73139,-1.7523e-08)(5.78345,-1.80819e-08)(5.8355,-1.65824e-08)(5.88756,-1.55378e-08)(5.93961,-1.4274e-08)(5.99167,-1.32658e-08)(6.04373,-1.21607e-08)(6.09578,-1.01244e-08)(6.14784,-8.65908e-09)(6.1999,-7.42969e-09)(6.25195,-7.04894e-09)(6.30401,-5.66904e-09)(6.35606,-5.13275e-09)(6.40812,-3.35704e-09)(6.46018,-8.23584e-10)(6.51223,9.20185e-10)(6.56429,1.40148e-09)(6.61635,4.29664e-10)(6.6684,1.0762e-09)(6.72046,2.32563e-09)(6.77251,3.1122e-09)(6.82457,2.05529e-09)(6.87663,9.50817e-11)(6.92868,2.19642e-11)(6.98074,1.6255e-09)(7.0328,2.7949e-09)(7.08485,2.25724e-09)(7.13691,9.48317e-10)(7.18896,-2.84382e-10)(7.24102,2.45548e-10)(7.29308,7.58481e-10)(7.34513,-7.42834e-10)(7.39719,-3.14974e-09)(7.44925,-4.39074e-09)(7.5013,-3.86145e-09)(7.55336,-3.41849e-09)(7.60541,-3.36798e-09)(7.65747,-4.01767e-09)(7.70953,-3.13136e-09)(7.76158,-3.24233e-09)(7.81364,-1.76029e-09)(7.86569,-2.02668e-10)(7.91775,7.95537e-10)(7.96981,3.07193e-10)(8.02186,1.62854e-09)(8.07392,3.68162e-09)(8.12598,5.06758e-09)(8.17803,4.34648e-09)(8.23009,3.11461e-09)(8.28214,3.10681e-09)(8.3342,3.37384e-09)(8.38626,3.94866e-09)(8.43831,3.42741e-09)(8.49037,1.83003e-09)(8.54243,1.0272e-09)(8.59448,1.64016e-09)(8.64654,2.39089e-09)(8.69859,1.99442e-09)(8.75065,-1.56531e-10)(8.80271,-2.01276e-09)(8.85476,-1.82721e-09)(8.90682,-7.369e-10)(8.95888,-8.12405e-10)(9.01093,-2.7693e-09)(9.06299,-5.55232e-09)(9.11504,-5.78742e-09)(9.1671,-5.50798e-09)(9.21916,-7.23382e-09)(9.27121,-9.1654e-09)(9.32327,-1.13515e-08)(9.37533,-1.21362e-08)(9.42738,-1.17282e-08)(9.47944,-1.20132e-08)(9.53149,-1.29612e-08)(9.58355,-1.43223e-08)(9.63561,-1.62033e-08)(9.68766,-1.57219e-08)(9.73972,-1.52722e-08)(9.79178,-1.60385e-08)(9.84383,-1.83368e-08)(9.89589,-2.07927e-08)(9.94794,-2.13182e-08)(10,-2.08628e-08)};
\end{axis}
\end{tikzpicture}
}
\subfloat[$N=3$]{
\begin{tikzpicture}
\begin{axis}[
    width=.49\textwidth,
    xlabel={Time },
    ylabel={$\mathsf{S}(t)-\mathsf{S}(0)$}, 
    ymin=-1.2e-7, ymax=1.2e-7,
    legend pos=north east, legend cell align=left, legend style={font=\tiny},	
    xmajorgrids=true, ymajorgrids=true, grid style=dashed, 
    legend entries={$\text{CFL} = \frac{1}{4}$, $\text{CFL} = \frac{1}{8}$}
    ]
\pgfplotsset{
cycle list={{blue}, {red, dashed}}
}
\addplot+[semithick, mark options={solid, fill=markercolor, scale=1.5},line width=1.5pt]
coordinates{(0.00624805,0)(0.0499844,-1.00578e-09)(0.0937207,4.27791e-09)(0.137457,5.69024e-09)(0.181193,1.01132e-08)(0.22493,1.51474e-08)(0.268666,1.92586e-08)(0.312402,2.53154e-08)(0.356139,2.74302e-08)(0.399875,2.88243e-08)(0.443611,2.90612e-08)(0.487348,3.14072e-08)(0.531084,3.34135e-08)(0.57482,3.19189e-08)(0.618557,3.52928e-08)(0.662293,3.8433e-08)(0.706029,4.14384e-08)(0.749766,4.78918e-08)(0.793502,5.06168e-08)(0.837238,5.69694e-08)(0.880975,6.46637e-08)(0.924711,6.88565e-08)(0.968447,6.99206e-08)(1.01218,7.01085e-08)(1.05592,7.39151e-08)(1.09966,8.07977e-08)(1.14339,8.1123e-08)(1.18713,7.92769e-08)(1.23087,7.94767e-08)(1.2746,7.66804e-08)(1.31834,7.35782e-08)(1.36207,7.0274e-08)(1.40581,7.03478e-08)(1.44955,7.14908e-08)(1.49328,7.00786e-08)(1.53702,7.12516e-08)(1.58076,7.24142e-08)(1.62449,7.07992e-08)(1.66823,7.25048e-08)(1.71197,7.11029e-08)(1.7557,6.67772e-08)(1.79944,6.69249e-08)(1.84317,6.50965e-08)(1.88691,6.11087e-08)(1.93065,5.74553e-08)(1.97438,5.34544e-08)(2.01812,5.35652e-08)(2.06186,5.02835e-08)(2.10559,4.96117e-08)(2.14933,5.06362e-08)(2.19306,4.78819e-08)(2.2368,4.6152e-08)(2.28054,4.61492e-08)(2.32427,4.45295e-08)(2.36801,4.29969e-08)(2.41175,4.52257e-08)(2.45548,4.80921e-08)(2.49922,4.838e-08)(2.54296,4.96141e-08)(2.58669,5.08383e-08)(2.63043,4.6747e-08)(2.67416,4.50343e-08)(2.7179,4.57811e-08)(2.76164,4.49465e-08)(2.80537,4.39347e-08)(2.84911,4.03653e-08)(2.89285,3.76173e-08)(2.93658,3.69779e-08)(2.98032,3.4891e-08)(3.02405,3.45774e-08)(3.06779,3.29115e-08)(3.11153,3.10074e-08)(3.15526,2.97435e-08)(3.199,2.87588e-08)(3.24274,2.84067e-08)(3.28647,2.81302e-08)(3.33021,2.58228e-08)(3.37395,2.48137e-08)(3.41768,2.40986e-08)(3.46142,2.49202e-08)(3.50515,2.39885e-08)(3.54889,2.25155e-08)(3.59263,2.27391e-08)(3.63636,2.1759e-08)(3.6801,2.06663e-08)(3.72384,2.06544e-08)(3.76757,2.05903e-08)(3.81131,2.3061e-08)(3.85505,2.57328e-08)(3.89878,2.72493e-08)(3.94252,2.87574e-08)(3.98625,2.88237e-08)(4.02999,2.84593e-08)(4.07373,2.76746e-08)(4.11746,2.65683e-08)(4.1612,2.56745e-08)(4.20494,2.45629e-08)(4.24867,2.43523e-08)(4.29241,2.35488e-08)(4.33614,2.24155e-08)(4.37988,2.12819e-08)(4.42362,2.00313e-08)(4.46735,1.84941e-08)(4.51109,1.6667e-08)(4.55483,1.49902e-08)(4.59856,1.42444e-08)(4.6423,1.30539e-08)(4.68604,1.17151e-08)(4.72977,1.10038e-08)(4.77351,1.00834e-08)(4.81724,8.26513e-09)(4.86098,6.57791e-09)(4.90472,5.3892e-09)(4.94845,4.65155e-09)(4.99219,4.14356e-09)(5.03593,2.94385e-09)(5.07966,2.84368e-09)(5.1234,2.11751e-09)(5.16714,1.6903e-09)(5.21087,1.44444e-09)(5.25461,1.35602e-09)(5.29834,2.90889e-10)(5.34208,-1.37863e-09)(5.38582,-2.28586e-09)(5.42955,-3.39655e-09)(5.47329,-5.3107e-09)(5.51703,-6.78537e-09)(5.56076,-7.42485e-09)(5.6045,-9.04905e-09)(5.64823,-1.06757e-08)(5.69197,-1.13842e-08)(5.73571,-1.17967e-08)(5.77944,-1.30714e-08)(5.82318,-1.46886e-08)(5.86692,-1.62677e-08)(5.91065,-1.70661e-08)(5.95439,-1.89814e-08)(5.99813,-1.98452e-08)(6.04186,-2.14826e-08)(6.0856,-2.37628e-08)(6.12933,-2.49191e-08)(6.17307,-2.52989e-08)(6.21681,-2.51691e-08)(6.26054,-2.60865e-08)(6.30428,-2.71662e-08)(6.34802,-2.73813e-08)(6.39175,-2.75091e-08)(6.43549,-2.80128e-08)(6.47923,-2.86358e-08)(6.52296,-3.00474e-08)(6.5667,-3.16028e-08)(6.61043,-3.24199e-08)(6.65417,-3.31652e-08)(6.69791,-3.35865e-08)(6.74164,-3.44548e-08)(6.78538,-3.53176e-08)(6.82912,-3.54693e-08)(6.87285,-3.57781e-08)(6.91659,-3.66918e-08)(6.96032,-3.83037e-08)(7.00406,-3.96064e-08)(7.0478,-4.0954e-08)(7.09153,-4.29645e-08)(7.13527,-4.47955e-08)(7.17901,-4.669e-08)(7.22274,-4.87219e-08)(7.26648,-5.07412e-08)(7.31022,-5.20228e-08)(7.35395,-5.31846e-08)(7.39769,-5.40127e-08)(7.44142,-5.52447e-08)(7.48516,-5.66301e-08)(7.5289,-5.84773e-08)(7.57263,-5.99872e-08)(7.61637,-6.11387e-08)(7.66011,-6.24051e-08)(7.70384,-6.30648e-08)(7.74758,-6.35239e-08)(7.79132,-6.43198e-08)(7.83505,-6.54076e-08)(7.87879,-6.66389e-08)(7.92252,-6.82018e-08)(7.96626,-7.01565e-08)(8.01,-7.1955e-08)(8.05373,-7.30281e-08)(8.09747,-7.41271e-08)(8.14121,-7.52431e-08)(8.18494,-7.55822e-08)(8.22868,-7.59917e-08)(8.27241,-7.69124e-08)(8.31615,-7.739e-08)(8.35989,-7.74046e-08)(8.40362,-7.79214e-08)(8.44736,-7.89306e-08)(8.4911,-7.98664e-08)(8.53483,-8.10523e-08)(8.57857,-8.29924e-08)(8.62231,-8.46339e-08)(8.66604,-8.67237e-08)(8.70978,-8.81372e-08)(8.75351,-8.93285e-08)(8.79725,-9.06938e-08)(8.84099,-9.19019e-08)(8.88472,-9.32049e-08)(8.92846,-9.43302e-08)(8.9722,-9.56419e-08)(9.01593,-9.6963e-08)(9.05967,-9.82278e-08)(9.10341,-9.89182e-08)(9.14714,-9.91399e-08)(9.19088,-9.95626e-08)(9.23461,-1.00956e-07)(9.27835,-1.02832e-07)(9.32209,-1.04554e-07)(9.36582,-1.04899e-07)(9.40956,-1.0486e-07)(9.4533,-1.04754e-07)(9.49703,-1.05146e-07)(9.54077,-1.06358e-07)(9.5845,-1.07663e-07)(9.62824,-1.08046e-07)(9.67198,-1.08266e-07)(9.71571,-1.08011e-07)(9.75945,-1.08009e-07)(9.80319,-1.08323e-07)(9.84692,-1.08249e-07)(9.89066,-1.07826e-07)(9.9344,-1.07992e-07)(9.97813,-1.08103e-07)};
\addplot+[semithick, mark options={solid, fill=markercolor, scale=1.5},line width=1.5pt]
coordinates{(0.00312402,0)(0.0499844,-2.99208e-10)(0.0968447,1.17198e-09)(0.143705,1.63079e-09)(0.190565,3.07836e-09)(0.237426,4.06707e-09)(0.284286,5.05992e-09)(0.331147,6.77992e-09)(0.378007,7.20693e-09)(0.424867,7.23481e-09)(0.471728,7.63065e-09)(0.518588,8.48032e-09)(0.565448,7.88693e-09)(0.612309,8.70059e-09)(0.659169,9.42339e-09)(0.706029,1.02816e-08)(0.75289,1.1981e-08)(0.79975,1.25283e-08)(0.84661,1.45007e-08)(0.893471,1.63133e-08)(0.940331,1.72912e-08)(0.987192,1.73582e-08)(1.03405,1.80253e-08)(1.08091,1.94106e-08)(1.12777,2.01078e-08)(1.17463,1.97929e-08)(1.22149,1.9839e-08)(1.26835,1.91927e-08)(1.31521,1.83874e-08)(1.36207,1.75907e-08)(1.40893,1.78968e-08)(1.4558,1.78635e-08)(1.50266,1.76663e-08)(1.54952,1.83358e-08)(1.59638,1.78862e-08)(1.64324,1.82915e-08)(1.6901,1.83774e-08)(1.73696,1.74978e-08)(1.78382,1.73731e-08)(1.83068,1.71178e-08)(1.87754,1.61394e-08)(1.9244,1.5117e-08)(1.97126,1.43712e-08)(2.01812,1.44139e-08)(2.06498,1.34719e-08)(2.11184,1.38099e-08)(2.1587,1.37431e-08)(2.20556,1.31569e-08)(2.25242,1.31188e-08)(2.29928,1.28291e-08)(2.34614,1.22808e-08)(2.393,1.2585e-08)(2.43986,1.34816e-08)(2.48672,1.34394e-08)(2.53358,1.38907e-08)(2.58044,1.40671e-08)(2.6273,1.30231e-08)(2.67416,1.27697e-08)(2.72102,1.26865e-08)(2.76789,1.27166e-08)(2.81475,1.21876e-08)(2.86161,1.1074e-08)(2.90847,1.08202e-08)(2.95533,1.02187e-08)(3.00219,1.01039e-08)(3.04905,9.69258e-09)(3.09591,9.30729e-09)(3.14277,8.9385e-09)(3.18963,8.68756e-09)(3.23649,8.59984e-09)(3.28335,8.28531e-09)(3.33021,7.88736e-09)(3.37707,7.64743e-09)(3.42393,7.76282e-09)(3.47079,7.94433e-09)(3.51765,7.35893e-09)(3.56451,7.29083e-09)(3.61137,7.12729e-09)(3.65823,6.93779e-09)(3.70509,6.86794e-09)(3.75195,6.7119e-09)(3.79881,7.17171e-09)(3.84567,7.76091e-09)(3.89253,8.18843e-09)(3.93939,8.36416e-09)(3.98625,8.2383e-09)(4.03311,8.19582e-09)(4.07998,7.91801e-09)(4.12684,7.69219e-09)(4.1737,7.38021e-09)(4.22056,7.24244e-09)(4.26742,7.03125e-09)(4.31428,6.71834e-09)(4.36114,6.35674e-09)(4.408,5.97646e-09)(4.45486,5.61762e-09)(4.50172,5.13061e-09)(4.54858,4.80554e-09)(4.59544,4.61019e-09)(4.6423,4.2028e-09)(4.68916,3.93831e-09)(4.73602,3.82308e-09)(4.78288,3.53172e-09)(4.82974,3.1203e-09)(4.8766,2.86377e-09)(4.92346,2.78603e-09)(4.97032,2.65636e-09)(5.01718,2.39738e-09)(5.06404,2.26832e-09)(5.1109,1.99756e-09)(5.15776,1.85203e-09)(5.20462,1.75551e-09)(5.25148,1.60317e-09)(5.29834,1.10494e-09)(5.3452,7.44371e-10)(5.39206,5.13407e-10)(5.43893,1.58082e-10)(5.48579,-1.10372e-10)(5.53265,-2.20147e-10)(5.57951,-5.75952e-10)(5.62637,-8.41266e-10)(5.67323,-9.27491e-10)(5.72009,-9.65309e-10)(5.76695,-1.24092e-09)(5.81381,-1.60651e-09)(5.86067,-1.92343e-09)(5.90753,-2.27332e-09)(5.95439,-2.61466e-09)(6.00125,-2.9566e-09)(6.04811,-3.53229e-09)(6.09497,-3.81074e-09)(6.14183,-3.87305e-09)(6.18869,-3.77177e-09)(6.23555,-3.90247e-09)(6.28241,-4.21772e-09)(6.32927,-4.39412e-09)(6.37613,-4.5647e-09)(6.42299,-4.78912e-09)(6.46985,-5.03355e-09)(6.51671,-5.43149e-09)(6.56357,-5.79548e-09)(6.61043,-5.90581e-09)(6.65729,-5.87551e-09)(6.70415,-6.02585e-09)(6.75102,-6.13069e-09)(6.79788,-6.24696e-09)(6.84474,-6.3828e-09)(6.8916,-6.77627e-09)(6.93846,-7.34254e-09)(6.98532,-7.78258e-09)(7.03218,-8.24913e-09)(7.07904,-8.88717e-09)(7.1259,-9.44568e-09)(7.17276,-9.95071e-09)(7.21962,-1.05444e-08)(7.26648,-1.10876e-08)(7.31334,-1.1531e-08)(7.3602,-1.17709e-08)(7.40706,-1.20955e-08)(7.45392,-1.24424e-08)(7.50078,-1.2839e-08)(7.54764,-1.30667e-08)(7.5945,-1.33161e-08)(7.64136,-1.35358e-08)(7.68822,-1.36743e-08)(7.73508,-1.38576e-08)(7.78194,-1.41345e-08)(7.8288,-1.4406e-08)(7.87566,-1.47245e-08)(7.92252,-1.50866e-08)(7.96938,-1.54498e-08)(8.01624,-1.55766e-08)(8.06311,-1.58598e-08)(8.10997,-1.61261e-08)(8.15683,-1.62623e-08)(8.20369,-1.64666e-08)(8.25055,-1.66967e-08)(8.29741,-1.68577e-08)(8.34427,-1.71167e-08)(8.39113,-1.74259e-08)(8.43799,-1.7669e-08)(8.48485,-1.79491e-08)(8.53171,-1.8289e-08)(8.57857,-1.87072e-08)(8.62543,-1.92366e-08)(8.67229,-1.97203e-08)(8.71915,-2.01144e-08)(8.76601,-2.05831e-08)(8.81287,-2.09835e-08)(8.85973,-2.13913e-08)(8.90659,-2.17191e-08)(8.95345,-2.20523e-08)(9.00031,-2.23577e-08)(9.04717,-2.26378e-08)(9.09403,-2.28626e-08)(9.14089,-2.297e-08)(9.18775,-2.32556e-08)(9.23461,-2.3654e-08)(9.28147,-2.40687e-08)(9.32833,-2.42578e-08)(9.3752,-2.43351e-08)(9.42206,-2.4376e-08)(9.46892,-2.45643e-08)(9.51578,-2.48449e-08)(9.56264,-2.52732e-08)(9.6095,-2.54822e-08)(9.65636,-2.54807e-08)(9.70322,-2.54233e-08)(9.75008,-2.55252e-08)(9.79694,-2.55877e-08)(9.8438,-2.55184e-08)(9.89066,-2.53737e-08)(9.93752,-2.53551e-08)(9.98438,-2.52858e-08)};
\end{axis}
\end{tikzpicture}
}
\caption{Change in entropy over time for entropy conservative formulations of the compressible Euler equations and the implicit midpoint method. }
\label{fig:euler_ec}
\end{figure}
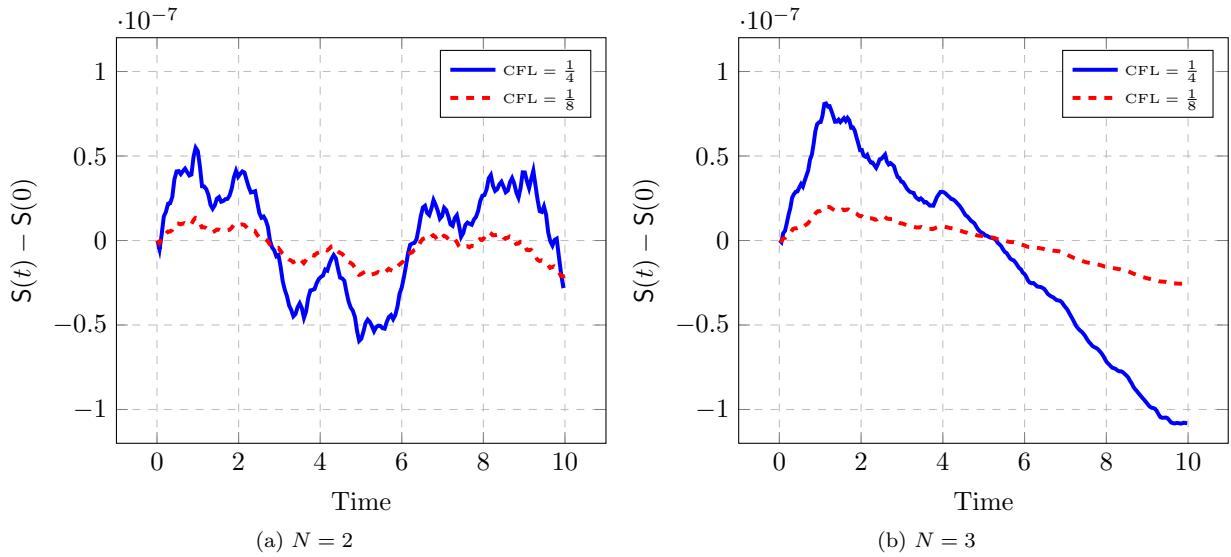
Let $\mathsf{S}(t) = \int_{\Omega}S(\bm{u}(\bm{x},t))$ denote the total entropy in the domain $\Omega$, where the integral is approximated using the same quadrature rule used to construct the DG mass matrix over each element. We begin by checking the change in entropy $\mathsf{S}(t)-\mathsf{S}(0)$ for an entropy conservative formulation. We utilize a discontinuous initial condition
\[
\rho = \begin{cases}
1.1 & -.5 \leq x,y \leq .5\\
1 & \text{otherwise}
\end{cases}, \qquad u,v = 0, \qquad E = \rho^\gamma.
\]
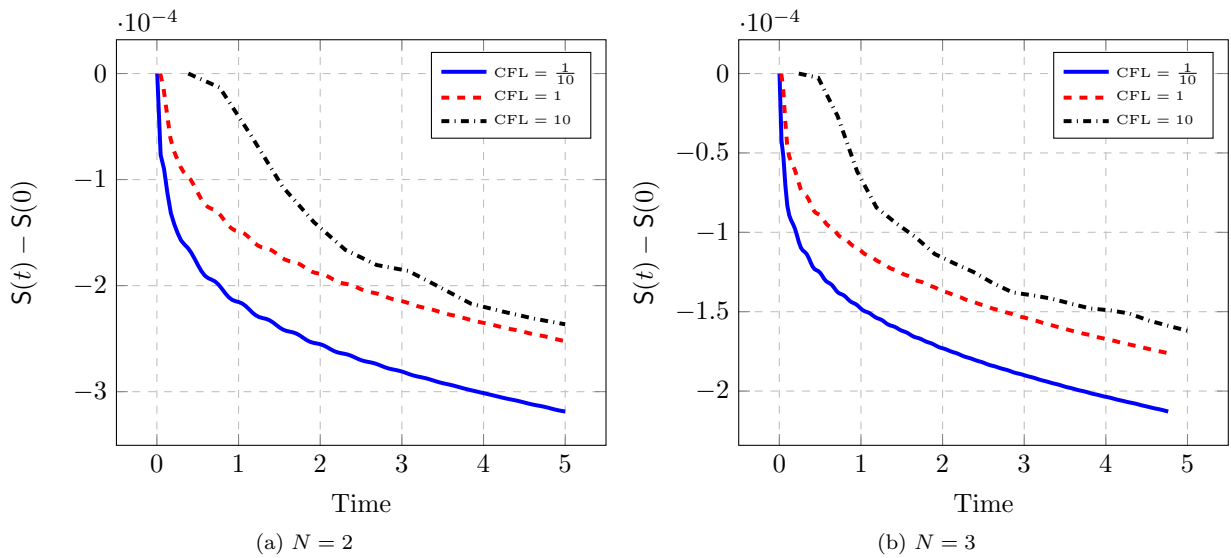
\begin{figure}
\centering
\subfloat[$N=2$]{
\begin{tikzpicture}
\begin{axis}[
    width=.49\textwidth,
    xlabel={Time },
    ylabel={$\mathsf{S}(t)-\mathsf{S}(0)$}, 
    %xmax=3.5,
%    ymin=-1.2e-7, ymax=1.2e-7,
    legend pos=north east, legend cell align=left, legend style={font=\tiny},	
    xmajorgrids=true, ymajorgrids=true, grid style=dashed, 
    legend entries={$\text{CFL} = \frac{1}{10}$, $\text{CFL} = 1$, $\text{CFL} = 10$}
]
\pgfplotsset{
cycle list={{blue}, {red, dashed},{black,dashdotted}}
}
\addplot+[semithick, mark options={solid, fill=markercolor, scale=1.5},line width=1.5pt]
coordinates{(0.0041632,0)(0.0457952,-7.66627e-05)(0.0874271,-8.99814e-05)(0.129059,-0.00011274)(0.170691,-0.000131392)(0.212323,-0.000141949)(0.253955,-0.000150471)(0.295587,-0.000157265)(0.337219,-0.000161203)(0.378851,-0.000164431)(0.420483,-0.000168926)(0.462115,-0.000174629)(0.503747,-0.000180738)(0.545379,-0.000186371)(0.587011,-0.000190571)(0.628643,-0.00019297)(0.670275,-0.000194342)(0.711907,-0.000196032)(0.753539,-0.000198899)(0.795171,-0.000202852)(0.836803,-0.000207134)(0.878435,-0.000210834)(0.920067,-0.000213336)(0.961699,-0.000214665)(1.00333,-0.000215522)(1.04496,-0.000216849)(1.08659,-0.000219162)(1.12823,-0.000222253)(1.16986,-0.000225439)(1.21149,-0.000228026)(1.25312,-0.000229676)(1.29475,-0.000230576)(1.33639,-0.000231335)(1.37802,-0.000232589)(1.41965,-0.000234584)(1.46128,-0.000237075)(1.50291,-0.000239537)(1.54455,-0.000241475)(1.58618,-0.000242691)(1.62781,-0.000243402)(1.66944,-0.000244115)(1.71107,-0.000245308)(1.75271,-0.000247134)(1.79434,-0.000249347)(1.83597,-0.000251462)(1.8776,-0.000253046)(1.91923,-0.000253988)(1.96087,-0.000254571)(2.0025,-0.000255277)(2.04413,-0.000256459)(2.08576,-0.000258132)(2.12739,-0.000260007)(2.16903,-0.000261688)(2.21066,-0.000262907)(2.25229,-0.000263666)(2.29392,-0.000264234)(2.33555,-0.000264967)(2.37719,-0.000266077)(2.41882,-0.000267517)(2.46045,-0.000269029)(2.50208,-0.000270325)(2.54371,-0.000271261)(2.58535,-0.000271914)(2.62698,-0.000272527)(2.66861,-0.000273344)(2.71024,-0.000274455)(2.75187,-0.000275758)(2.79351,-0.00027703)(2.83514,-0.000278077)(2.87677,-0.000278851)(2.9184,-0.000279472)(2.96003,-0.000280151)(3.00167,-0.000281044)(3.0433,-0.000282151)(3.08493,-0.000283334)(3.12656,-0.000284414)(3.16819,-0.000285289)(3.20983,-0.000285984)(3.25146,-0.000286623)(3.29309,-0.000287345)(3.33472,-0.000288225)(3.37635,-0.000289232)(3.41799,-0.00029026)(3.45962,-0.000291197)(3.50125,-0.000291996)(3.54288,-0.000292694)(3.58451,-0.000293385)(3.62614,-0.000294152)(3.66778,-0.000295011)(3.70941,-0.000295914)(3.75104,-0.000296794)(3.79267,-0.000297614)(3.8343,-0.000298385)(3.87594,-0.000299134)(3.91757,-0.000299885)(3.9592,-0.000300642)(4.00083,-0.000301393)(4.04246,-0.000302134)(4.0841,-0.000302878)(4.12573,-0.000303644)(4.16736,-0.000304435)(4.20899,-0.000305226)(4.25062,-0.000305982)(4.29226,-0.000306678)(4.33389,-0.000307326)(4.37552,-0.000307965)(4.41715,-0.000308647)(4.45878,-0.000309406)(4.50042,-0.000310229)(4.54205,-0.000311062)(4.58368,-0.000311837)(4.62531,-0.000312512)(4.66694,-0.000313107)(4.70858,-0.000313695)(4.75021,-0.000314365)(4.79184,-0.000315152)(4.83347,-0.000316015)(4.8751,-0.000316857)(4.91674,-0.00031759)(4.95837,-0.000318192)(5,-0.000318719)};
\addplot+[semithick, mark options={solid, fill=markercolor, scale=1.5},line width=1.5pt]
coordinates{(0.0413223,0)(0.0826446,-1.44653e-05)(0.123967,-3.80211e-05)(0.165289,-6.06282e-05)(0.206612,-7.26946e-05)(0.247934,-8.04246e-05)(0.289256,-8.8016e-05)(0.330579,-9.31887e-05)(0.371901,-9.62262e-05)(0.413223,-0.000100033)(0.454545,-0.00010553)(0.495868,-0.000111704)(0.53719,-0.000117566)(0.578512,-0.000122397)(0.619835,-0.000125586)(0.661157,-0.00012726)(0.702479,-0.000128545)(0.743802,-0.000130702)(0.785124,-0.000134154)(0.826446,-0.000138393)(0.867769,-0.000142515)(0.909091,-0.000145734)(0.950413,-0.000147702)(0.991736,-0.000148702)(1.03306,-0.000149548)(1.07438,-0.000151097)(1.1157,-0.000153665)(1.15702,-0.00015687)(1.19835,-0.000159973)(1.23967,-0.000162341)(1.28099,-0.000163763)(1.32231,-0.000164534)(1.36364,-0.000165297)(1.40496,-0.000166634)(1.44628,-0.000168708)(1.4876,-0.000171214)(1.52893,-0.000173624)(1.57025,-0.000175486)(1.61157,-0.000176651)(1.65289,-0.000177351)(1.69421,-0.00017807)(1.73554,-0.000179255)(1.77686,-0.000181046)(1.81818,-0.000183221)(1.8595,-0.000185335)(1.90083,-0.000186976)(1.94215,-0.000187998)(1.98347,-0.000188614)(2.02479,-0.000189261)(2.06612,-0.000190319)(2.10744,-0.000191883)(2.14876,-0.000193733)(2.19008,-0.000195492)(2.2314,-0.000196848)(2.27273,-0.000197722)(2.31405,-0.000198306)(2.35537,-0.000198939)(2.39669,-0.000199893)(2.43802,-0.000201216)(2.47934,-0.000202723)(2.52066,-0.000204135)(2.56198,-0.000205242)(2.60331,-0.00020602)(2.64463,-0.000206629)(2.68595,-0.000207309)(2.72727,-0.000208232)(2.7686,-0.000209405)(2.80992,-0.000210684)(2.85124,-0.000211869)(2.89256,-0.000212823)(2.93388,-0.000213553)(2.97521,-0.000214196)(3.01653,-0.000214931)(3.05785,-0.000215864)(3.09917,-0.000216964)(3.1405,-0.000218103)(3.18182,-0.000219135)(3.22314,-0.00021999)(3.26446,-0.000220697)(3.30579,-0.000221361)(3.34711,-0.000222096)(3.38843,-0.000222963)(3.42975,-0.000223944)(3.47107,-0.00022496)(3.5124,-0.000225919)(3.55372,-0.000226766)(3.59504,-0.000227508)(3.63636,-0.000228205)(3.67769,-0.000228929)(3.71901,-0.000229721)(3.76033,-0.000230576)(3.80165,-0.000231455)(3.84298,-0.000232318)(3.8843,-0.000233135)(3.92562,-0.000233895)(3.96694,-0.000234603)(4.00826,-0.000235274)(4.04959,-0.000235938)(4.09091,-0.000236629)(4.13223,-0.000237378)(4.17355,-0.000238189)(4.21488,-0.000239023)(4.2562,-0.000239813)(4.29752,-0.000240505)(4.33884,-0.000241097)(4.38017,-0.000241649)(4.42149,-0.000242248)(4.46281,-0.000242956)(4.50413,-0.000243772)(4.54545,-0.00024463)(4.58678,-0.000245441)(4.6281,-0.000246143)(4.66942,-0.000246733)(4.71074,-0.000247269)(4.75207,-0.000247842)(4.79339,-0.000248527)(4.83471,-0.000249335)(4.87603,-0.000250201)(4.91736,-0.00025102)(4.95868,-0.000251708)(5,-0.000252254)};
\addplot+[semithick, mark options={solid, fill=markercolor, scale=1.5},line width=1.5pt]
coordinates{(0.384615,0)(0.769231,-1.31819e-05)(1.15385,-5.85394e-05)(1.53846,-0.000106661)(1.92308,-0.000140182)(2.30769,-0.000166244)(2.69231,-0.000180554)(3.07692,-0.000186091)(3.46154,-0.00020138)(3.84615,-0.000217047)(4.23077,-0.000224371)(4.61538,-0.000231118)(5,-0.000236457)};
\end{axis}
\end{tikzpicture}
}
\subfloat[$N=3$]{
\begin{tikzpicture}
\begin{axis}[
    width=.49\textwidth,
    xlabel={Time },
    ylabel={$\mathsf{S}(t)-\mathsf{S}(0)$}, 
    %xmax=3.5,
%    ymin=-1.2e-7, ymax=1.2e-7,
    legend pos=north east, legend cell align=left, legend style={font=\tiny},	
    xmajorgrids=true, ymajorgrids=true, grid style=dashed, 
    legend entries={$\text{CFL} = \frac{1}{10}$, $\text{CFL} = 1$, $\text{CFL} = 10$}
    ]
\pgfplotsset{
cycle list={{blue}, {red, dashed},{black,dashdotted}}
}
\addplot+[semithick, mark options={solid, fill=markercolor, scale=1.5},line width=1.5pt]
coordinates{(0.00238095,0)(0.0261905,-4.19766e-05)(0.05,-5.25527e-05)(0.0738095,-7.03108e-05)(0.097619,-8.30838e-05)(0.121429,-8.93286e-05)(0.145238,-9.25202e-05)(0.169048,-9.49233e-05)(0.192857,-9.78758e-05)(0.216667,-0.000101953)(0.240476,-0.000106356)(0.264286,-0.000109694)(0.288095,-0.000111448)(0.311905,-0.000112396)(0.335714,-0.000113732)(0.359524,-0.000116031)(0.383333,-0.000118928)(0.407143,-0.000121518)(0.430952,-0.00012313)(0.454762,-0.000123869)(0.478571,-0.000124469)(0.502381,-0.000125626)(0.52619,-0.000127425)(0.55,-0.000129353)(0.57381,-0.000130801)(0.597619,-0.000131625)(0.621429,-0.000132249)(0.645238,-0.000133236)(0.669048,-0.000134756)(0.692857,-0.000136456)(0.716667,-0.000137814)(0.740476,-0.000138607)(0.764286,-0.000139068)(0.788095,-0.000139641)(0.811905,-0.000140603)(0.835714,-0.000141856)(0.859524,-0.000143061)(0.883333,-0.000143943)(0.907143,-0.000144522)(0.930952,-0.000145084)(0.954762,-0.000145913)(0.978571,-0.000147041)(1.00238,-0.000148228)(1.02619,-0.000149179)(1.05,-0.000149791)(1.07381,-0.000150217)(1.09762,-0.000150721)(1.12143,-0.000151462)(1.14524,-0.000152383)(1.16905,-0.000153269)(1.19286,-0.000153938)(1.21667,-0.000154399)(1.24048,-0.000154853)(1.26429,-0.000155517)(1.2881,-0.000156424)(1.3119,-0.000157388)(1.33571,-0.000158167)(1.35952,-0.000158664)(1.38333,-0.000158996)(1.40714,-0.000159387)(1.43095,-0.000159976)(1.45476,-0.000160719)(1.47857,-0.000161446)(1.50238,-0.000162014)(1.52619,-0.000162434)(1.55,-0.000162867)(1.57381,-0.00016347)(1.59762,-0.000164254)(1.62143,-0.000165062)(1.64524,-0.000165711)(1.66905,-0.000166141)(1.69286,-0.000166458)(1.71667,-0.00016683)(1.74048,-0.000167356)(1.76429,-0.000167993)(1.7881,-0.000168607)(1.8119,-0.000169098)(1.83571,-0.00016948)(1.85952,-0.000169875)(1.88333,-0.000170399)(1.90714,-0.000171058)(1.93095,-0.000171743)(1.95476,-0.000172321)(1.97857,-0.000172742)(2.00238,-0.000173073)(2.02619,-0.000173434)(2.05,-0.000173893)(2.07381,-0.000174422)(2.09762,-0.000174933)(2.12143,-0.000175361)(2.14524,-0.000175726)(2.16905,-0.000176111)(2.19286,-0.000176588)(2.21667,-0.000177149)(2.24048,-0.000177717)(2.26429,-0.000178207)(2.2881,-0.000178592)(2.3119,-0.000178922)(2.33571,-0.000179278)(2.35952,-0.000179702)(2.38333,-0.000180172)(2.40714,-0.000180624)(2.43095,-0.000181012)(2.45476,-0.000181351)(2.47857,-0.000181709)(2.50238,-0.00018214)(2.52619,-0.000182638)(2.55,-0.000183137)(2.57381,-0.000183565)(2.59762,-0.000183904)(2.62143,-0.000184203)(2.64524,-0.000184533)(2.66905,-0.000184924)(2.69286,-0.000185347)(2.71667,-0.000185744)(2.74048,-0.000186077)(2.76429,-0.000186373)(2.7881,-0.000186695)(2.8119,-0.000187094)(2.83571,-0.000187561)(2.85952,-0.000188032)(2.88333,-0.000188436)(2.90714,-0.000188751)(2.93095,-0.000189016)(2.95476,-0.000189297)(2.97857,-0.000189635)(3.00238,-0.00019002)(3.02619,-0.000190404)(3.05,-0.000190744)(3.07381,-0.000191042)(3.09762,-0.000191338)(3.12143,-0.000191678)(3.14524,-0.000192075)(3.16905,-0.000192494)(3.19286,-0.000192887)(3.21667,-0.000193227)(3.24048,-0.000193526)(3.26429,-0.000193826)(3.2881,-0.00019416)(3.3119,-0.000194524)(3.33571,-0.000194883)(3.35952,-0.000195206)(3.38333,-0.000195491)(3.40714,-0.000195775)(3.43095,-0.000196099)(3.45476,-0.000196476)(3.47857,-0.000196878)(3.50238,-0.000197256)(3.52619,-0.000197579)(3.55,-0.000197857)(3.57381,-0.000198134)(3.59762,-0.000198446)(3.62143,-0.000198796)(3.64524,-0.000199151)(3.66905,-0.000199469)(3.69286,-0.000199737)(3.71667,-0.000199985)(3.74048,-0.00020026)(3.76429,-0.000200592)(3.7881,-0.000200965)(3.8119,-0.000201334)(3.83571,-0.000201657)(3.85952,-0.00020193)(3.88333,-0.000202188)(3.90714,-0.000202469)(3.93095,-0.000202783)(3.95476,-0.000203103)(3.97857,-0.000203393)(4.00238,-0.000203643)(4.02619,-0.000203879)(4.05,-0.000204144)(4.07381,-0.000204462)(4.09762,-0.000204816)(4.12143,-0.000205164)(4.14524,-0.000205471)(4.16905,-0.000205739)(4.19286,-0.000205998)(4.21667,-0.000206275)(4.24048,-0.000206574)(4.26429,-0.000206868)(4.2881,-0.000207133)(4.3119,-0.000207371)(4.33571,-0.000207607)(4.35952,-0.000207875)(4.38333,-0.000208187)(4.40714,-0.000208525)(4.43095,-0.000208855)(4.45476,-0.000209157)(4.47857,-0.000209437)(4.50238,-0.000209717)(4.52619,-0.000210012)(4.55,-0.000210317)(4.57381,-0.000210612)(4.59762,-0.00021088)(4.62143,-0.000211126)(4.64524,-0.000211371)(4.66905,-0.000211639)(4.69286,-0.000211938)(4.71667,-0.000212256)(4.74048,-0.000212574)(4.76429,-0.000212878)};
\addplot+[semithick, mark options={solid, fill=markercolor, scale=1.5},line width=1.5pt]
coordinates{(0.0238095,0)(0.047619,-9.30491e-06)(0.0714286,-2.9059e-05)(0.0952381,-4.44567e-05)(0.119048,-5.19045e-05)(0.142857,-5.53619e-05)(0.166667,-5.7953e-05)(0.190476,-6.0774e-05)(0.214286,-6.45016e-05)(0.238095,-6.89514e-05)(0.261905,-7.28532e-05)(0.285714,-7.51727e-05)(0.309524,-7.62008e-05)(0.333333,-7.71661e-05)(0.357143,-7.90175e-05)(0.380952,-8.17552e-05)(0.404762,-8.4633e-05)(0.428571,-8.68041e-05)(0.452381,-8.79493e-05)(0.47619,-8.84927e-05)(0.5,-8.92262e-05)(0.52381,-9.06472e-05)(0.547619,-9.25858e-05)(0.571429,-9.44214e-05)(0.595238,-9.56558e-05)(0.619048,-9.63335e-05)(0.642857,-9.6966e-05)(0.666667,-9.80515e-05)(0.690476,-9.96333e-05)(0.714286,-0.000101289)(0.738095,-0.000102528)(0.761905,-0.000103206)(0.785714,-0.000103614)(0.809524,-0.0001042)(0.833333,-0.0001052)(0.857143,-0.000106482)(0.880952,-0.000107686)(0.904762,-0.000108543)(0.928571,-0.00010909)(0.952381,-0.000109629)(0.97619,-0.000110449)(1,-0.000111576)(1.02381,-0.00011276)(1.04762,-0.000113697)(1.07143,-0.000114285)(1.09524,-0.00011468)(1.11905,-0.000115153)(1.14286,-0.00011588)(1.16667,-0.000116808)(1.19048,-0.000117722)(1.21429,-0.000118421)(1.2381,-0.000118893)(1.2619,-0.000119327)(1.28571,-0.000119952)(1.30952,-0.000120828)(1.33333,-0.000121792)(1.35714,-0.000122595)(1.38095,-0.00012311)(1.40476,-0.000123427)(1.42857,-0.000123769)(1.45238,-0.000124309)(1.47619,-0.000125042)(1.5,-0.000125811)(1.52381,-0.000126442)(1.54762,-0.000126896)(1.57143,-0.000127302)(1.59524,-0.000127837)(1.61905,-0.000128564)(1.64286,-0.000129373)(1.66667,-0.000130072)(1.69048,-0.000130551)(1.71429,-0.000130866)(1.7381,-0.000131183)(1.7619,-0.000131645)(1.78571,-0.000132261)(1.80952,-0.000132917)(1.83333,-0.000133477)(1.85714,-0.000133896)(1.88095,-0.000134261)(1.90476,-0.000134707)(1.92857,-0.000135303)(1.95238,-0.000135986)(1.97619,-0.000136616)(2,-0.000137091)(2.02381,-0.000137428)(2.04762,-0.000137736)(2.07143,-0.000138129)(2.09524,-0.000138633)(2.11905,-0.000139178)(2.14286,-0.000139665)(2.16667,-0.000140056)(2.19048,-0.000140405)(2.21429,-0.000140808)(2.2381,-0.000141317)(2.2619,-0.000141891)(2.28571,-0.000142434)(2.30952,-0.000142867)(2.33333,-0.000143196)(2.35714,-0.000143498)(2.38095,-0.000143857)(2.40476,-0.000144302)(2.42857,-0.000144787)(2.45238,-0.000145234)(2.47619,-0.000145605)(2.5,-0.00014593)(2.52381,-0.000146287)(2.54762,-0.000146725)(2.57143,-0.000147223)(2.59524,-0.000147703)(2.61905,-0.000148097)(2.64286,-0.0001484)(2.66667,-0.000148674)(2.69048,-0.000148994)(2.71429,-0.000149387)(2.7381,-0.000149816)(2.7619,-0.000150214)(2.78571,-0.000150546)(2.80952,-0.000150839)(2.83333,-0.000151161)(2.85714,-0.000151562)(2.88095,-0.000152028)(2.90476,-0.000152488)(2.92857,-0.000152871)(2.95238,-0.000153159)(2.97619,-0.0001534)(3,-0.000153669)(3.02381,-0.00015401)(3.04762,-0.000154406)(3.07143,-0.000154798)(3.09524,-0.000155137)(3.11905,-0.000155423)(3.14286,-0.000155703)(3.16667,-0.000156031)(3.19048,-0.000156422)(3.21429,-0.000156843)(3.2381,-0.000157236)(3.2619,-0.00015757)(3.28571,-0.000157857)(3.30952,-0.000158142)(3.33333,-0.000158465)(3.35714,-0.000158826)(3.38095,-0.00015919)(3.40476,-0.000159519)(3.42857,-0.000159804)(3.45238,-0.000160078)(3.47619,-0.000160387)(3.5,-0.000160752)(3.52381,-0.000161151)(3.54762,-0.000161533)(3.57143,-0.000161858)(3.59524,-0.000162129)(3.61905,-0.000162387)(3.64286,-0.000162677)(3.66667,-0.000163013)(3.69048,-0.000163366)(3.71429,-0.000163692)(3.7381,-0.000163967)(3.7619,-0.000164211)(3.78571,-0.00016447)(3.80952,-0.000164783)(3.83333,-0.000165145)(3.85714,-0.000165515)(3.88095,-0.000165844)(3.90476,-0.000166118)(3.92857,-0.000166363)(3.95238,-0.00016662)(3.97619,-0.000166912)(4,-0.000167224)(4.02381,-0.000167521)(4.04762,-0.000167782)(4.07143,-0.00016802)(4.09524,-0.000168272)(4.11905,-0.000168567)(4.14286,-0.000168905)(4.16667,-0.000169252)(4.19048,-0.000169569)(4.21429,-0.000169843)(4.2381,-0.000170092)(4.2619,-0.000170347)(4.28571,-0.000170621)(4.30952,-0.000170904)(4.33333,-0.000171174)(4.35714,-0.000171422)(4.38095,-0.000171661)(4.40476,-0.000171917)(4.42857,-0.00017221)(4.45238,-0.000172533)(4.47619,-0.000172861)(4.5,-0.000173167)(4.52381,-0.000173445)(4.54762,-0.000173712)(4.57143,-0.000173986)(4.59524,-0.000174272)(4.61905,-0.00017456)(4.64286,-0.00017483)(4.66667,-0.000175079)(4.69048,-0.00017532)(4.71429,-0.000175574)(4.7381,-0.000175855)(4.7619,-0.000176159)(4.78571,-0.000176471)};
\addplot+[semithick, mark options={solid, fill=markercolor, scale=1.5},line width=1.5pt]
coordinates{(0.238095,0)(0.47619,-2.61097e-06)(0.714286,-2.68316e-05)(0.952381,-6.15585e-05)(1.19048,-8.43893e-05)(1.42857,-9.41787e-05)(1.66667,-0.000102974)(1.90476,-0.000113686)(2.14286,-0.000119561)(2.38095,-0.000125068)(2.61905,-0.000132096)(2.85714,-0.000137849)(3.09524,-0.000139645)(3.33333,-0.000141763)(3.57143,-0.000144944)(3.80952,-0.000147999)(4.04762,-0.000149114)(4.28571,-0.000151115)(4.52381,-0.000155522)(4.7619,-0.000158857)(5,-0.000162031)};
\end{axis}
\end{tikzpicture}
}
\caption{Change in entropy over time for entropy stable formulations of the compressible Euler equations and the implicit midpoint method. }
\label{fig:euler_es}
\end{figure}
A triangular mesh is constructed by bisecting each element in a uniform mesh of $8\times 8$ quadrilaterals, and the solution is evolved until final time $T=10$. Figure~\ref{fig:euler_ec} shows the results for $N=2$ and $N=3$ for $\text{CFL} = \frac{1}{4}$ and $\text{CFL} = \frac{1}{8}$. We observe that halving the CFL reduces the change in entropy by a factor of $4$, which corresponds to the second order time accuracy of the implicit midpoint rule. We also check the entropy dissipation for different CFL numbers in Figure~\ref{fig:euler_es}. Both $N=2$ and $N=3$ display similar results, with dissipation decreasing as the CFL increases. We also note that the number of Newton iterations remains relatively constant for different time-step sizes: for $\text{CFL} = .1$, Newton converged in $3-4$ iterations, for $\text{CFL} = 1$, Newton converged in $4-5$ iterations, and for $\text{CFL} = 10$, Newton converged in $5-6$ iterations. We also tried $\text{CFL} = 100$ over a longer time period, for which Newton also converged in $5-6$ iterations. However, for initial conditions with sufficiently large variations, Newton did not converge for $\text{CFL} = 100$. 
\begin{figure}[!h]
\centering
\subfloat[Uniform mesh]{\includegraphics[width=.45\textwidth]{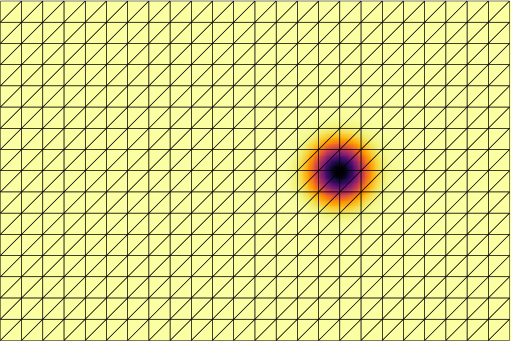}}
\hspace{2em}
\subfloat[Anisotropic mesh]{\includegraphics[width=.45\textwidth]{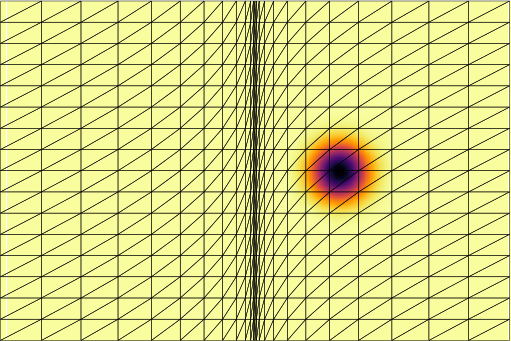}}
\caption{Isentropic vortex solutions at time $T=5$ for $N=3$ and $dt = .1$ on both uniform and ``squeezed'' anisotropic triangular meshes.}
\label{fig:vort}
\end{figure}

Finally, we examine the behavior of the implicit midpoint method with respect to variations in element size. We use the isentropic vortex analytic solution (centered at $x=0, y=5$) on the domain $[-5,5]\times [0,20]$.  We compute the $L^2$ error at time $T=5$ for a uniform and ``squeezed'' anisotropic triangular mesh, both of which are constructed by bisecting each element of a uniform $24\times 16$ quadrilateral mesh. Both cases use a degree $N=3$ approximation and time-step of $dt = .1$, and  Figure~\ref{fig:vort} shows both DG solutions with the mesh overlaid. The $L^2$ errors for the isentropic vortex are $0.0901$ and  $0.0935$ on the uniform and ``squeezed'' meshes, respectively, suggesting that implicit entropy stable formulations robustly handle settings where the maximum stable time-step for explicit methods is restricted by minimum element size.

\end{document}